\newtheorem{theorem}{Theorem}
\newtheorem*{theorem*}{Theorem}
\newtheorem{definition}[theorem]{Definition}
\newtheorem{proposition}[theorem]{Proposition}
\newtheorem{corollary}[theorem]{Corollary}
\newtheorem{example}[theorem]{Example}
\theoremstyle{definition}
\newtheorem{remark}[theorem]{Remark}
\newcommand{\hh}{{\mathbb{H}}}
\newcommand{\HH}{{\mathbb{H}}}
\newcommand{\cc}{{\mathbb{C}}}
\newcommand{\rr}{{\mathbb{R}}}
\newcommand{\nn}{{\mathbb{N}}}
\newcommand{\s}{{\mathbb{S}}}
\newcommand{\sr}{\mathcal{SR}}
\newcommand{\SF}{\mathcal{S}}
\newcommand{\A}{\mathcal{A}}
\newcommand{\I}{\mathcal{I}}
\newcommand{\J}{\mathcal{J}}
\newcommand{\E}{\mathcal{E}}
\newcommand{\SP}{\mathcal{S}}
\def\A{{A}}
\newcommand{\Pm}{\mathcal{P}}
\newcommand{\dibar}{\overline\partial}
\newcommand{\dcf}{\dibar}
\newcommand{\dif}{\vartheta}
\newcommand{\difbar}{\overline{\dif}}
\newcommand\thb[1]{\overline{\theta}_{#1}}
\newcommand\thet[1]{\theta_{#1}}
\newcommand\IM{\operatorname{Im}}
\newcommand\RE{\operatorname{Re}}
\newcommand\vs[1]{{#1}_s^\circ}
\newcommand\sd[1]{{#1}'_s}
\newcommand{\ui}{\imath}
\newcommand{\OO}{\Omega}
\newcommand{\mbb}{\mathbb}
\newcommand{\mr}{\mathrm}
\newcommand{\mscr}{\mathscr}
\newcommand{\R}{\mbb{R}}
\newcommand{\mc}{\mathcal}
 \newcommand{\C}{\mbb{C}}
 \newcommand{\bc}{\begin{center}}
 \newcommand{\ec}{\end{center}}
\newcommand{\SD}{\EuScript{D}}
\newcommand{\be}{\begin{equation}}
\newcommand{\eeq}{\end{equation}}
\newcommand{\ba}{\begin{align*}}
\newcommand{\eal}{\end{align*}}
\begin{document}
\title{Wirtinger operators for functions of several quaternionic variables
}

\author{
\textsc{Alessandro Perotti}
\thanks{{\bf Statements and Declarations}: 
The author has no relevant financial or non-financial interests to disclose.
The author has no competing interests to declare that are relevant to the content of this article.
\\This work was supported by GNSAGA of INdAM (Istituto Nazionale di Alta Matematica), and by the grant ``Progetto di Ricerca INdAM, Teoria delle funzioni ipercomplesse e applicazioni''}
}
\date{
{\small
{Department of Mathematics, University of Trento, I--38123, Povo-Trento, Italy}\\
alessandro.perotti@unitn.it\\
ORCID: 0000-0002-4312-9504}
}

\maketitle

\begin{abstract}
We introduce Wirtinger operators for functions of several quaternionic variables. These operators are real linear partial differential operators which behave well on quaternionic polynomials, with properties analogous to the ones satisfied by the Wirtinger derivatives of several complex variables. Due to the non-commutativity of the variables, Wirtinger operators turn out to be of higher order, except the first ones that are of the first order. In spite of that, these operators commute with each other and satisfy a Leibniz rule for products. Moreover, they characterize the class of slice-regular polynomials, and more generally of slice-regular quaternionic functions.    
As a step towards the definition of the Wirtinger operators, we provide Almansi-type decompositions for slice functions and for slice-regular functions of several variables. We also  introduce some aspects of local slice analysis, based on the definition of locally slice-regular function in any open subset of the $n$-dimensional quaternionic space.
\medskip

\noindent \emph{2020 MSC:} Primary 30G35; Secondary 32A30, 32W50.

\medskip

\noindent \emph{Keywords:} Wirtinger operators, Quaternions, Slice-regular functions, Almansi decomposition.
\end{abstract}


\section{Introduction}

The main aim of this work is to introduce \emph{Wirtinger operators} for functions of several quaternionic variables. We are looking for partial differential operators which behave well when applied to quaternionic polynomials, in analogy with what happens for the Wirtinger derivatives $\dd{}{z_m}$, $\dd{}{\overline z_m}$ of several complex variables. 
We recall this behavior in the complex case. It holds 
\[
\textstyle\dd{z_j}{z_m}=\delta_{jm},\quad
\dd{\overline z_j}{z_m}=0,\quad
\dd{z_j}{\overline z_m}=0,\quad
\dd{\overline z_j}{\overline z_m}=\delta_{jm},
\]
and Leibniz rules for products are satisfied. To recall the relevance 
 of Wirtinger operators in the classical complex variables  case, we cite the words of Reinhold
Remmert \cite[p.\ 67]{Remmert}:  \emph{``the Wirtinger calculus \dots is quite indispensable in the function theory of several variables''}. This statement suggests that the investigation of similar operators for functions of $n$ quaternionic variables $x_1,\ldots,x_n$ should be of some interest. 

The differential operators  $\thet 1,\ldots\thet n, \thb1,\ldots,\thb n$ we are looking for should satisfy, in analogy with the complex case, the equalities
\[
\textstyle\thet m{(x_j)}=\delta_{jm},\quad
\thet m{(\overline x_j)}=0,\quad
\thb m{(x_j)}=0,\quad
\thb m{(\overline x_j)}=\delta_{jm}
\]
for all $1\le j,m\le n$. 
Moreover, they should commute with each other and respect a Leibniz type formula for products. The above properties are sufficient to compute the action of these operators on every quaternionic polynomial of the form 
$p(x)=\sum_{\ell,h}x_1^{\ell_1}\overline x_1^{h_1}\cdots x_n^{\ell_n}\overline x_n^{h_n}a_{\ell,h}$, where $\ell,h\in\mathbb N^n$ are multiindices. These polynomial functions are examples of (left) \emph{slice functions} of several quaternionic variables, while polynomials of the form $f(x)=\sum_{\ell}x_1^{\ell_1}\cdots x_n^{\ell_n}a_\ell$ are main examples of the subclass of \emph{slice-regular} quaternionic functions. In view of this, we will investigate with a particular emphasis the action of the Wirtinger operators on the class of slice functions of $n$ variables. 

Quaternionic \emph{slice-regular functions} in one variable were introduced in 2006-2007 by Gentili and Struppa \cite{GeSt2006CR,GeSt2007Adv}. 
The theory of slice-regular functions, also called \emph{slice analysis}, has then been  extended to the octonions,  to Clifford algebras, and more generally to real alternative *-algebras (see, e.g., \cite{CoSaSt2009Israel,GeStRocky,AIM2011}). See also \cite{GeStoSt2013,Struppa2015Algebras} for reviews of this function theory and extended references. 
Slice analysis has been generalized to several variables in various contexts \cite{GhPe_ICNPAA,MoscowSeveral,SeveralA}.  See also \cite{Struppa2015Several} for a review of this theory and \cite{AngellaBisi,GentiliGoriSarfatti} for applications of slice analysis of several quaternionic variables to quaternionic manifolds.
Our approach to slice analysis in several variables, described in \cite{GhPe_ICNPAA,MoscowSeveral,SeveralA}, is based on the concept of stem functions of several variables and on the introduction  of a family of commuting complex structures on the real vector space $\R^{2^n}$. Several quaternionic variables have been investigated also by Colombo, Sabadini and Struppa \cite{CoSaSt2013Indiana}. Their approach via stem functions is similar to ours, but the definition of regularity is different, as shown in \cite[\S3]{SeveralA}. 
In the case of two quaternionic variables, an approach equivalent to  \cite{GhPe_ICNPAA,MoscowSeveral,SeveralA} has been described by Dou, Ren, I. Sabadini and Wang in \cite{DouRenSabadiniWang}. 
A slice theory of several variables has been proposed also by Ren and Yang \cite{Ren_yang_2020} for octonions and then extended to other algebras $A$ in \cite{DouRenSabadiniAMPA}. The major difference with our theory is that the authors define slice functions 
on a class of non-open (in the euclidean topology) subsets of the space $A^n$, where the variables associate and commute. 
A different approach to several quaternionic variables, based on fiber bundles, has been given by Gonz\'alez-Cervantes in the recent paper \cite{GonzalesAACA}.

In one variable, the quaternionic Wirtinger operators $\thet1$, $\thb1$ are already known (even if not denoted with this name). They are the global differential operators $\dif$, $\difbar$ introduced in \cite{Gh_Pe_GlobDiff} (see definition \eqref{def:theta} in \S\ref{sec:1var}).  
Since 
the operators $\dif$, $\difbar$ coincide  on slice functions with the \emph{slice derivatives} $\dd{}{x}$, $\dd{\ }{x^c}$ respectively (see \S\ref{sec:1var} for definitions), they satisfy the required formulas when applied to powers $x^k$ and $\overline x^k$. 

In several variables, one can still consider the first-order differential operators $\dif_{x_m}$ and $\difbar_{x_m}$ defined in the same way (see formula \eqref{def:thb} in \S\ref{sec:sev_var}) with respect to the variable $x_m$. However, while the first operators $\dif_{x_1}$ and $\difbar_{x_1}$ still behave well, the other operators $\dif_{x_2},\ldots,\dif_{x_n}$ and $\difbar_{x_2},\ldots,\difbar_{x_n}$ do not, due to the non-commutativity of the variables. Observe that one can define the \emph{slice partial derivatives} also for slice functions of several variables (see \S\ref{sec:sev_var}). These operators have the correct algebraic properties but, in spite of their name, they are not defined as differential operators. 
To define the correct Wirtinger operators (Definition \ref{def:Wirtinger}), we make use of the spherical Dirac operators $\Gamma_{x_m}$ together with the operators $\dif_{x_m}$ and $\difbar_{x_m}$.
It turns out that the Wirtinger operators $\thet m$, $\thb m$ are $\R$-linear partial differential operators of the first order in every single variable $x_1,\ldots x_m$, but of total order $m$. Their action on slice functions coincide with that of slice partial derivatives. In particular, the common kernel of the operators $\thb1,\ldots,\thb n$ is the class of slice-regular functions of $n$ variables (Theorem \ref{teo:Wirtinger_regular}). 

A second aim of this work 
is to introduce some aspects of \emph{local slice analysis} in several variables.
The definitions of slice-regular function, both in the original approach \cite{GeSt2007Adv}, with functions defined on \emph{slice domains} intersecting the real axis, and in the \emph{stem function} approach \cite{AIM2011}, where functions are defined on domains that are axially symmetric around the real axis, are not of local character. 

We extend to several variables what obtained in \cite{CR_S_operators} for one variable, making a systematic use of the one-variable characterization of sliceness and of slice-regularity in several variables \cite[Proposition 2.23 and Theorem 3.23]{SeveralA}.
Using the relations with the Cauchy-Riemann-Fueter operators $\dibar_{x_m}$, we are able to refine the original definition of slice-regularity and find a formulation that has a natural local version. We propose a definition of \emph{locally slice-regular function} in any open subset of $\hh^n$ (Definition\ \ref{def:loc_several}), compatible with the existing definitions. Using the spherical Dirac operators $\Gamma_{x_m}$ in place of $\dibar_{x_m}$, we also refine the definition of slice function to get the concept of \emph{locally strongly slice function} of $n$ variables (Definition\ \ref{def:loc_slice_several}). Among this class of functions, Wirtinger operators still characterize the subclass of locally slice-regular functions (Theorem \ref{teo:Wirtinger_regular_local}). 

We describe in more detail the structure of the paper. In section \ref{sec:Preliminaries}, we recall some basic definitions about quaternionic slice analysis. Subsection \ref{sec:1var} presents the basic definitions for the one-variable theory, while subsection \ref{sec:sev_var} gives a rapid introduction to the (less known) several variables theory (see \cite{SeveralA} for full details in the more general context of real alternative *-algebras). Here we also recall the one-variable characterization of sliceness and of slice-regularity in several variables.  In Section \ref{sec:AlmansiDecomposition}, we prove some Almansi-type decompositions for slice functions of several quaternionic
variables. 
The slice-regular version of this result (Theorem \ref{teo:Almansi_several}) 
has been proved 
in \cite{Binosi1}. It expresses, for any $m=1,\ldots, n$, the slice-regular function $f$ as a combination of $2^m$ functions $\left\{\SP^m_K(f)\right\}_{K\in\Pm(m)}$ which exhibit specific harmonicity and regularity properties (see formula \eqref{def:sp}). In the slice-regular case, the functions $\SP^m_K(f)$ can be replaced by functions $\A^m_K(f)$ obtained by iterated application of the Cauchy-Riemann-Fueter operators $\dibar_{x_m}$ (see formula \eqref{eq:amk}). However, the functions $\SP^m_K(f)$ can indeed be defined for any slice function of $n$ variables as iterated one-variable spherical derivatives. In Section \ref{sec:AlmansiDecomposition2} we show that the same decomposition holds for any slice function (Theorem \ref{teo:slice_Almansi_several}). As expected, in this case the harmonicity and regularity properties of the functions $\SP^m_K(f)$ are lost. 
The aim of Section \ref{sec:Locally_slice-regular} is to introduce a local slice analysis in several quaternionic variables. It presents the definitions of \emph{strongly slice-regular} and \emph{locally strongly slice-regular} function on any open subset of $\hh^n$ and the proofs of two fundamental extendibility theorems: a global one for strongly slice-regular functions and the corresponding local version for locally strongly slice-regular functions. 
In Section \ref{sec:Wirtinger operators}, we give the definition of the Wirtinger operators (Definition \ref{def:Wirtinger}) as differential operators acting on any sufficiently smooth function on open subsets of $\hh^n$. This definition requires to associate to $f$ another set $\left\{\Gamma^m_K(f)\right\}_{K\in\Pm(m)}$ of $2^m$ functions obtained by iterated application of the spherical Dirac operators $\Gamma_{x_m}$ (see formula \eqref{eq:Gamma_mk}). On slice functions, the action of the differential operators $\Gamma^m_K$ coincides with the iterated spherical derivatives  $\SP^m_K(f)$. This fundamental fact and the Almansi-type decompositions allows to obtain the main properties of the Wirtinger operators (Proposition \ref{pro:powers}, Theorems \ref{teo:Wirtinger} and \ref{teo:Wirtinger_regular}, Proposition \ref{pro:Wirtinger_conjugate}). 
In Section \ref{sec:Local_slice_functions}, after extending the Almansi decomposition of Theorem \ref{teo:slice_Almansi_several} to any sufficiently smooth (not necessarily slice) function using the $\Gamma^m_K(f)$'s, we give the definition of \emph{strongly slice} and \emph{locally strongly slice} function (Definition \ref{def:loc_slice_several}). Then we prove an extendibility theorem (Theorem \ref{teo:slice_extension}) and the characterization of slice-regularity in the class of (locally) strongly slice functions, by means of the Wirtinger operators $\thb1,\ldots,\thb n$ (Theorem \ref{teo:Wirtinger_regular_local}).

\section{Preliminaries}\label{sec:Preliminaries}

\subsection{The one-variable slice function theory}\label{sec:1var}

Slice function theory is based on the ``slice'' decomposition of the quaternionic space $\hh$ in complex planes. For each imaginary unit $J$ in the sphere
 \[\s=\{J\in\HH\ |\ J^2=-1\}=\{x_1i+x_2j+x_3k\in\HH\ |\ x_1^2+x_2^2+x_3^2=1\},\]
let $\C_J=\langle 1,J\rangle\simeq\C$ denote the subalgebra generated by $J$. Then it holds
\[\HH=\bigcup_{J\in \s}\C_J, \quad\text{with $\C_J\cap\C_K=\R$\ for every $J,K\in\s,\ J\ne\pm K$.}\]
Quaternionic slice functions are functions which are compatible with the slice character of $\hh$.
More precisely, let $D$ be a subset of $\C$ that is invariant w.r.t.\ complex conjugation. 
Let $\hh\otimes_{\R}\C$ be the complexified algebra, whose elements $w$ are of the form $w=a+\ui b$ with $a,b\in \hh$ and $\ui^2=-1$. In $\hh\otimes_{\R}\C$ we consider the complex conjugation mapping $w=a+\ui b$ to $\overline w=a-\ui b$  for all $a,b\in \hh$.
If a function $F: D \to \hh\otimes_{\R}\C$ satisfies  $F(\overline z)=\overline{F(z)}$ for every $z\in D$, then $F$  is called a \emph{stem function} on $D$. For every $J\in\s$, consider the real $^*$-algebra isomorphism $\phi_J:\C\to\C_J$ defined by $\phi_J(\alpha+i\beta):=\alpha+J\beta$ for all $\alpha,\beta\in\R$. 
Let $\OO_D$ be the \emph{axially symmetric} (or \emph{circular}) subset of the quadratic cone defined by 
\[
\OO_D=\bigcup_{J\in\s}\phi_J(D)=\{\alpha+J\beta\in\hh : \alpha,\beta\in\R, \alpha+\ui\beta\in D,J\in\s\}.
\]
The stem function $F=F_1+\ui F_2:D \to \hh\otimes_{\R}\C$  induces the \emph{(left) slice function} $f=\I(F):\OO_D \to\hh$ in the following way: if $x=\alpha+J\beta =\phi_J(z)\in \OO_D\cap \C_J$, then  
\[ f(x)=F_1(z)+JF_2(z),\]
where $z=\alpha+i\beta$. 

Suppose that $D$ is open. The slice function $f=\I(F):\OO_D \to \hh$ is called \emph{(left) slice regular} if $F$ is holomorphic w.r.t.\ the complex structure on $\hh\otimes_{\R}\C$ defined by left multiplication by $\ui$. 
For example, polynomial functions $f(x)=\sum_{j=0}^d x^ja_j$ with right quaternionic coefficients are slice regular on $\hh$. If a domain $\OO$ in $\hh$ is axially symmetric and intersects the real axis, then this definition of slice regularity is equivalent to the one proposed by Gentili and Struppa \cite{GeSt2007Adv}. We will denote by $\sr(\OO)$ the right quaternionic module of slice-regular functions on $\OO$, and by $\SF(\OO)$ the module of slice functions on $\OO$.

The pointwise product of two stem functions $F$, $G$ on $D$ defines the \emph{slice product} of the slice functions $f=\I(F)$, $g=\I(G)$ on $\OO$, i.e., $f\cdot g=\I(FG)$.
The slice product has an interpretation in terms of pointwise quaternionic product:
\[
(f\cdot g)(x)=
\begin{cases}f(x)g(f(x)^{-1}xf(x))&\text{\ if $f(x)\ne0$,}\\0&\text{\ if $f(x)=0$}.
\end{cases}
\]
If $f,g\in\sr(\OO)$, then also their slice product $f\cdot g$ is slice-regular on $\OO$. The function $f=\I(F)$ is called \emph{slice-preserving} if the $\hh$-components $F_1$ and $F_2$ of the stem function $F$ are real-valued.
Under this condition the slice product $f\cdot g$ coincides with the pointwise product of $f$ and $g$. The same holds if $F$ and $G$ are $\hh$-valued. In this case, we will denote it simply by $fg$. 

To any slice function $f=\I(F):\OO_D \to \hh$, one can associate the function $\vs f:\OO_D \to \hh$, called \emph{spherical value} of $f$, and the function $f'_s:\OO_D \setminus \R \to \hh$, called  \emph{spherical derivative} of $f$, defined as
\[
\vs f(x):=\tfrac{1}{2}(f(x)+f(\overline x))
\quad \text{and} \quad
f'_s(x):=\tfrac{1}{2}\IM(x)^{-1}(f(x)-f(\overline x)).
\] 
$\vs f$ and $f'_s$ are slice functions, constant on every set $\s_x:=\alpha+\beta\s$, satisfying the formula
\[
f(x)=\vs f(x)+\IM(x)f'_s(x)
\]
for all $x\in\OO_D\setminus \R$ and the product rule $\sd{(f\cdot g)}=\sd f \vs g+\vs f\sd g$.  A remarkable property of the spherical derivative of a slice-regular function is the harmonicity of its real components w.r.t.\ the four real variables, while the slice-regular function itself is only biharmonic \cite{Harmonicity}.

The \emph{slice derivatives} 
(sometimes called \emph{complex derivatives}) of a slice function $f=\I(F)$ are defined by means of the  Cauchy-Riemann operators applied to the inducing stem function $F$:
\[\dd{f}{x}=\I\left(\dd{F}{z}\right),\quad \dd{f\;}{x^c}=\I\left(\dd{F}{\overline z}\right).\]
Observe that $f\in\sr(\OO)$ if and only if $\dd{f\;}{x^c}=0$ and if $f\in\sr(\OO)$ then also $\dd{f}{x}$ is slice-regular on $\OO$. Moreover, the slice derivatives satisfy the Leibniz product rule w.r.t.\ the slice product.

We recall from \cite{Gh_Pe_GlobDiff} the definition of the global differential operator $\difbar:\mathcal{C}^1(\OO\setminus \R,\hh) \to \mathcal{C}^0(\OO\setminus \R, \hh)$ associated with quaternionic slice-regular functions and its conjugate $\dif$:
\be\label{def:theta}
\dif=\frac12\left(\dd{}{x_0}+(\IM(x))^{-1} \sum_{i=1}^3x_{i} \, \dd{}{x_{i}}\right),\quad
\difbar=\frac12\left(\dd{}{x_0}-(\IM(x))^{-1} \sum_{i=1}^3x_{i} \, \dd{}{x_{i}}\right).
\eeq
On the slice $\C_I$ the operator $\difbar$ coincides with the standard Cauchy-Riemann operator of $\C_I$. 
For every slice function $f$ on $\OO$, it holds $\dif f=\dd{f}{x}$ and $\difbar f =\dd f{x^c}$  on $\OO\setminus\R$ (see \cite[Theorem 2.2]{Gh_Pe_GlobDiff}).

\subsection{The several variables slice function theory}\label{sec:sev_var}

We recall some basic definitions from \cite{GhPe_ICNPAA,SeveralA}.
Let $D$ be a subset of $\C^n$ that is invariant under complex conjugations: $\overline{z}^h:=(z_1,\ldots,\overline z_h,\ldots,z_n) \in D$ for all $z \in D$ and for all $h \in \{1,\ldots,n\}$.
Let $\{e_K\}_{K \in \mc{P}(n)}$ be a fixed basis of the real vector space $\R^{2^n}$. We identify $\R$ with the real vector subspace of $\R^{2^n}$ generated by $e_\emptyset\in\R^{2^n}$, and we write $e_{\emptyset}=1$. For simplicity, we set $e_k:=e_{\{k\}}$ for all $k\in\{1,\ldots,n\}$.

Each element $x$ of the tensor product $\hh \otimes \R^{2^n}$ can be uniquely written as $x=\sum_{K\in\Pm(n)}e_Ka_K$ with $a_K\in \hh$. Given any function $F:D \to \hh \otimes \R^{2^n}$, there exist unique functions $F_K:D\to \hh$ such that $F=\sum_{K \in \Pm(n)}e_KF_K$ ($F_K$ is called the \emph{$K$-component of $F$}).
A function $F:D \to \hh \otimes \R^{2^n}$ with $F=\sum_{K \in \Pm(n)}e_KF_K$ is a \emph{stem function} if 
\begin{equation} \label{eq:stem}
F_K(\overline{z}^h)=
\begin{cases}
F_K(z)  & \text{ if  $\,h \not\in K$,}
\\
-F_K(z)  & \text{ if $\,h \in K$}
\end{cases}
\end{equation}
for all  $z \in D$, $K \in \Pm(n)$ and $h \in \{1,\ldots,n\}$. 
Let $\OO_D$ be the \emph{axially symmetric} (or \emph{circular}) open subset of $\HH^n$ associated to  $D$, defined as
\[
\OO_D:=\{(\alpha_1+J_1\beta_1,\ldots\alpha_n+J_n\beta_n)\in \HH^n : J_1,\ldots,J_n\in\s, (\alpha_1+i\beta_1,\ldots,\alpha_n+i\beta_n)\in D\}.
\]
The \emph{(left) slice function} $f=\I(F):\OO_D \rightarrow \HH$ induced by $F$ is the function obtained by setting, for each $x=(x_1,\ldots,x_n)=(\alpha_1+J_1\beta_1,\ldots,\alpha_n+J_n\beta_n)\in\OO_D$,
\[
f(x)=\sum_{K \in \Pm(n)}J_KF_K(z)
\]
where $J_K=J_{k_1}\cdots J_{k_p}$ if $K=\{k_1,\ldots,k_p\}\in\Pm(n)\setminus\{\emptyset\}$ with $k_1<\cdots<k_p$,  $J_\emptyset=1$, and $z=(z_1,\ldots,z_n)=(\alpha_1+i\beta_1,\ldots,\alpha_n+i\beta_n)\in D$. 

We will denote by $\SF^0(\OO)$ the right $\hh$-module of slice functions in $\OO=\OO_D$ induced by continuous stem functions $F\in \mathcal C^0(D)$, and by $\SF^1(\OO)$ the submodule of slice functions in $\OO=\OO_D$ induced by stem functions $F\in\mathcal C^1(D)$.

Let $\mathcal J_1,\ldots\mathcal J_n$ be the commuting complex structures on $\R^{2n}\simeq\C^{\otimes n}$ induced, respectively, by the standard structures of the $n$ copies of $\C$. 
The isomorphism $\R^{2n}\simeq\C^{\otimes n}$ maps the basis element $e_K$ of $\R^{2n}$ to the element $v_K$ of $\C^{\otimes n}$ defined as
\[
v_K=v_1\otimes\cdots\otimes v_n \text{\quad with $v_h=i$ if $h\in K$, $v_h=1$ if $h\not\in K$}.
\]
Explicitly, the complex structures are defined by
\begin{equation}
\mathcal J_h(e_K)=
\begin{cases}
e_{K \cup \{h\}} & \text{\quad if }h\not\in K,\\
-e_{K \setminus \{h\}} & \text{\quad if }h\in K.
\end{cases}
\end{equation}
In particular, it holds $\mathcal J_h(e_h)=-1$ for $h=1,\ldots,n$. We extend these structures to $\HH\otimes\R^{2n}$ by setting $\mathcal J_h(a\otimes v)=a\otimes\mathcal J_h(v)$ for all $a\in\HH$ and $v\in\R^{2n}$.

Let $F:D \to \HH \otimes \R^{2n}$ be a stem function of class $\mscr{C}^1$. For each $h=1,\ldots, n$, we denote by $\partial_h$ and $\dibar_h$ the \emph{Cauchy-Riemann operators} w.r.t.\ the standard complex structure on $D$ and  $\J_h$ on $\HH\otimes \R^{2n}$, i.e.
\be\label{def:CR}
\partial_hF=\frac{1}{2}\left(\dd{F}{\alpha_h}-\J_h\left(\dd{F}{\beta_h}\right)\right)
\;\;\text{ and }\;\;
\dibar_hF=\frac{1}{2}\left(\dd{F}{\alpha_h}+\J_h\left(\dd{F}{\beta_h}\right)\right),
\eeq
where $\alpha_h+i \beta_h:D\to\C$ is the $h^{\mr{th}}$-coordinate function of $D$. Let $f=\I(F):\OO_D\to\HH$ and let $h\in\{1,\ldots,n\}$. Each operator of the type $\partial_h$ or $\dibar_h$ commutes with each other. We define the \emph{slice partial derivatives} of $f$ as the following slice functions on $\OO_D$:
\be\label{def:spd}
\dd{f}{x_h}:=\I(\partial_hF)
\;\;\text{ and }\;\;
\dd{f}{x_h^c}:=\I(\dibar_hF).
\eeq
The slice function $f=\I(F)$ is called \emph{slice-regular} on $\OO_D$ if $F$ is holomorphic w.r.t.\ $\J_1,\ldots,\J_n$, i.e., $\dibar_hF=0$ for $h\in\{1,\ldots,n\}$. Equivalently, $\dd f{x_h^c}=0$ for every $h$. For example, every polynomial function $p(x)=\sum_{\ell}x_1^{\ell_1}\cdots x_n^{\ell_n}a_\ell$ with ordered variables and right quaternionic coefficients $a_\ell$, with $\ell=(\ell_1,\ldots,\ell_n)$, is slice-regular on $\hh^n$. We will denote by $\sr(\OO)$ the right quaternionic module of slice-regular functions on $\OO=\OO_D$. 

Every product on $\HH\otimes\R^{2n}$ induces a product on stem functions, and hence a structure of real algebra on the set of slice functions. In the following we take the product obtained identifying as above the real algebra $\R^{2n}$ with $\C^{\otimes n}$. 
The corresponding (tensor) product in $\hh\otimes\R^{2n}$ is the linear extension of
\[
(a\otimes e_H)(b\otimes e_K)=(ab)\otimes (e_H e_K)=(ab)\otimes(-1)^{|H\cap K|}e_{H\Delta K}.
\]
Let $f=\I(F),g=\I(G):\OO_D \to \HH$ be slice functions. We define the \emph{{slice product} $f\cdot g:\OO_D\to \HH$} of $f$ and $g$ by $f\cdot g:=\I(FG)$, where $FG$ is the pointwise product defined by $(FG)(z)=F(z)G(z)$ in $\HH\otimes\R^{2n}$ for all $z\in D$. 

A slice function $g\in\SF(\OO)$ is called \emph{circular} if $g=\I(G)$ with $G$ an $\hh$-valued stem function. 
If $f,g\in\SF(\OO)$, with $g$ circular, then $f\cdot g$ coincides with the quaternionic pointwise product $fg$ (see \cite[Lemma~2.51]{SeveralA}). Observe that circular slice functions on $\OO$ are constant on sets of the form   $\s_{x_1}\times\cdots\times\s_{x_{n}}$ for every $x\in\OO$. 

The slice partial derivatives satisfy Leibniz's rule w.r.t.\ the slice product: for each slice functions $f,g\in\mc{S}^1(\OO_D)$ and $h=1,\ldots,n$, it holds
\[
\frac{\partial}{\partial x_h}(f\cdot g)=\frac{\partial f}{\partial x_h}\cdot g+f\cdot \frac{\partial g}{\partial x_h}\text{\quad and\quad}
\frac{\partial}{\partial x_h^c}(f\cdot g)=\frac{\partial f}{\partial x_h^c}\cdot g+f\cdot \frac{\partial g}{\partial x_h^c}.
\]
In particular, the slice product preserves slice regularity. 

In the following we will denote by $\dif_{x_m}$ and $\difbar_{x_m}$ the global differential operators  w.r.t.\ the quaternionic variable $x_m=x_{m_0}+ix_{m_1}+jx_{m_2}+kx_{m_3}$, defined as in \eqref{def:theta}: 
\be\label{def:thb}
\dif_{x_m}=\frac12\left(\dd{}{x_{m_0}}+(\IM(x_m))^{-1}\mathbb E_{x_m}\right),\quad
\difbar_{x_m}=\frac12\left(\dd{}{x_{m_0}}-(\IM(x_m))^{-1}\mathbb E_{x_m}\right),
\eeq
where $\mathbb E_{x_m}=\sum_{i=1}^3x_{m_i}\dd{\ }{x_{m_i}}$ is the Euler operator w.r.t.\ the vector variable $(x_{m_1},x_{m_2},x_{m_3})\in\R^3$.

\subsection*{One-variable characterization}
The concepts of spherical value and spherical derivative in one variable have a central role in the characterization of slice regularity in several variables in terms of separate one-variable regularity. 
Assume that $g$ is a slice function w.r.t.\ $x_h$ and define the functions $\SD_{x_h}^0g(x)$ and $\SD_{x_h}^1g(x)$ obtained taking the spherical value and the spherical derivative of $g$ w.r.t.\ $x_h$: 
\begin{equation}\label{eq:sd}
\SD_{x_h}^0g:=(g)^\circ_{s,x_h} \text{\quad and\quad}
\SD_{x_h}^1g:=(g)'_{s,x_h}.
\end{equation}
Let $f\in\SF(\OO)$ be a slice function on $\OO\subseteq\hh^n$, let $K\in\Pm(n)$ and let $\epsilon=\mathbf{1}_{K}$ be the characteristic function of $K$. Then $f$ is a slice function w.r.t.\ $x_1$ and, for each $h\in\{2,\ldots,n\}$, the \emph{truncated spherical $\epsilon$-derivative} $\SD_\epsilon f:=\SD^{\epsilon(h-1)}_{x_{h-1}}\cdots \SD^{\epsilon(1)}_{x_1}f$, obtained iterating \eqref{eq:sd}, is a well-defined slice function w.r.t.\ $x_h$. 
Moreover, $f\in\sr(\OO)$ if and only if $f$ is slice-regular w.r.t.\ $x_1$ and, for each $h\in\{2,\ldots,n\}$ and $K\in\Pm(n)$, $\SD_\epsilon f$ is slice-regular w.r.t.\ $x_h$ \cite[Proposition 2.23 and Theorem 3.23]{SeveralA}.  

For example, when $n=2$, a slice function $f$ is slice-regular in $x=(x_1,x_2)$ if and only if $f$ is slice-regular w.r.t.\ $x_1$, and the spherical value and spherical derivative of $f$ w.r.t.\ $x_1$ are slice-regular w.r.t.\ $x_2$.

\section{An Almansi-type decomposition for slice functions of several quaternionic variables}\label{sec:AlmansiDecomposition}

\subsection{Decomposition for slice-regular functions of several quaternionic variables}\label{sec:AlmansiDecomposition1}

In this section we recall and extend some results from \cite{Binosi1}, where the Almansi-type decomposition obtained in \cite{AlmansiH} was generalized to several variables. We first recall the one-variable result. 
Let $\dcf$ denote the \emph{Cauchy-Riemann-Fueter operator} on $\hh$:
\[\dcf =\frac12\left(\dd{}{x_0}+i\dd{}{x_1}+j\dd{}{x_2}+k\dd{}{x_3}\right).\]

\begin{theorem}[Almansi Theorem in one quaternionic variable \cite{AlmansiH}]
\label{teo:Almansi}
Let $f$ be slice-regular in an axially symmetric set $\OO\subseteq\hh$. Then there exist two unique quaternionic-valued zonal harmonic functions $h_1$, $h_2$ with pole $1$, such that 
\[
f(x)=h_1(x)-\overline x h_2(x)\text{\quad  $\forall x\in\OO$.}
\]
Moreover, it holds $h_1= (xf)'_s=-\dcf(xf)$, $h_2=f'_s=-\dcf f$ and $f$ is slice-preserving if and only if $h_1$ and $h_2$ are real-valued.
\end{theorem}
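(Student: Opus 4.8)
The plan is to reduce everything to the stem-function level and use the one-variable slice decomposition $f(x)=\vs f(x)+\IM(x)\sd f(x)$ recalled in \S\ref{sec:1var}. First I would observe that, writing $f=\I(F)$ with stem function $F=F_1+\ui F_2$, the spherical value and spherical derivative correspond to $F_1$ and $F_2$: explicitly $\vs f=\I(F_1)$ and $\sd f=\I(F_2/\beta)$ where $z=\alpha+i\beta$. The identity $f(x)=\vs f(x)+\IM(x)\sd f(x)$ then rearranges into the desired form once one identifies the two "zonal harmonic" pieces. Writing $\IM(x)=x-\RE(x)$ and $\overline x=2\RE(x)-x$, a short algebraic manipulation converts the expression $\vs f+\IM(x)\sd f$ into $h_1-\overline x\,h_2$ for suitable $h_1,h_2$; matching terms should force $h_2=\sd f$ and $h_1=\vs f+\RE(x)\sd f$. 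It remains to recognize these as $(xf)'_s$ and $f'_s$ respectively.

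Next I would verify the two formulas $h_2=f'_s=-\dcf f$ and $h_1=(xf)'_s=-\dcf(xf)$. The equality $h_2=f'_s$ is immediate from the previous step. For $h_1=(xf)'_s$, I would compute the spherical derivative of the slice function $xf$ using the product rule $\sd{(g\cdot f)}=\sd g\,\vs f+\vs g\,\sd f$ applied with $g(x)=x$ (the identity function, for which $\vs{g}=\RE(x)$ and $\sd g=1$), obtaining $(xf)'_s=\vs f+\RE(x)\sd f=h_1$, as required. The identifications with the Cauchy-Riemann-Fueter operator, namely $f'_s=-\dcf f$ and $(xf)'_s=-\dcf(xf)$, I would obtain from the known relation on $\OO\setminus\R$ between $\dcf$ and the operators $\dif,\difbar$ of \eqref{def:theta}: since $f$ is slice-regular we have $\difbar f=\dd{f}{x^c}=0$, and a direct comparison of $\dcf$ with the radial/spherical part of $\difbar$ yields that $\dcf$ acts on a slice-regular $f$ precisely as $-f'_s$ (this computation is local on each slice $\C_J$ and uses that the tangential part of $\dcf$ reproduces the spherical difference quotient).

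The main obstacle I anticipate is the harmonicity and zonality claim for $h_1$ and $h_2$, rather than the algebraic decomposition itself. For $h_2=f'_s$, harmonicity of the four real components is exactly the property recalled in \S\ref{sec:1var} (citing \cite{Harmonicity}), so that case is handled by the quoted result; for $h_1=(xf)'_s$ I would apply the same harmonicity statement to the slice-regular function $xf$ (noting $xf$ is slice-regular whenever $f$ is, since $x$ is a slice-regular polynomial and the slice product preserves regularity). The term \emph{zonal with pole $1$} encodes that these harmonic functions are invariant under the rotations fixing the real axis, which follows because $f'_s$ and $\vs f$ are constant on each sphere $\s_x=\alpha+\beta\s$; I would make this rotation-invariance explicit to justify the "zonal with pole $1$" terminology.

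Finally, for uniqueness and the slice-preserving characterization: uniqueness of $(h_1,h_2)$ follows by evaluating the decomposition at $x$ and at $\overline x$ and solving the resulting $2\times 2$ linear system (the coefficient determinant is $\overline x-x=-2\IM(x)\ne0$ off the real axis, with the values on $\R$ fixed by continuity), which recovers $h_2=\sd f$ and $h_1=\vs f+\RE(x)\sd f$ uniquely. For the last assertion, $f$ slice-preserving means $F_1,F_2$ are real-valued; since $h_1$ and $h_2$ are built $\R$-linearly from $\vs f=\I(F_1)$ and $\sd f=\I(F_2/\beta)$, they are real-valued exactly when $F_1,F_2$ are, giving the stated equivalence.
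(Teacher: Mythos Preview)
The paper does not contain a proof of this theorem: it is stated with the citation \cite{AlmansiH} and recalled as a known result, so there is no in-paper argument to compare your proposal against. That said, your proposal is essentially correct and is in fact the natural route: the algebraic identity $f=\vs f+\IM(x)\sd f$ together with $\IM(x)=\RE(x)-\overline x$ gives $f=(\vs f+\RE(x)\sd f)-\overline x\,\sd f$, and the product rule $(xf)'_s=\vs f+\RE(x)\sd f$ identifies the two pieces; harmonicity of $\sd f$ and $(xf)'_s$ is exactly the result you cite from \cite{Harmonicity}, zonality is the constancy on spheres $\s_x$, and your uniqueness argument coincides with what the paper records as Remark~\ref{rem:sd}. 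The only point where you are a bit loose is the justification of $\sd f=-\dcf f$: rather than the informal ``tangential part of $\dcf$ reproduces the spherical difference quotient'', you can simply quote \cite[Corollary~3.6.2]{Harmonicity} (as the paper does in Remark~\ref{rem:extend}), which gives this identity directly for slice-regular $f$.
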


We recall that a \emph{zonal harmonic functions with pole $1$} on $\OO$ is a harmonic function that is constant on the spheres $\s_y\cap \OO$ for every $y\in \OO$.

\begin{remark}\label{rem:sd}
If $f$ is any slice function in an axially symmetric set $\OO\subseteq\hh$, and $f(x)=h_1(x)-\overline x h_2(x)$, with $h_1$ and $h_2$ constant on every sphere $\s_x\subseteq\OO$, then a direct computation yields $h_1= (xf)'_s$, $h_2=f'_s$.
\end{remark}

The decomposition given by Theorem~\ref{teo:Almansi} is also called \emph{zonal decomposition} of $f$. Using the previous theorem and the one-variable interpretation of slice regularity (see \S\ref{sec:Preliminaries} and \cite[\S3.4]{SeveralA}), we will see below that the result can be extended to several variables. 

Let $\Pm(m)$ denote the set of all subsets of $\{1,\ldots,m\}$. 
Let $f\in\SF(\OO)$ be a slice function in an axially symmetric open set $\OO=\OO_D\subseteq\hh^n$.  Define recursively 
the slice functions $\SP^m_K(f)$, for every $m\in\{0,\ldots, n\}$ and every $K\in\Pm(m)$ as follows:
\begin{equation}\label{def:sp}
\begin{cases}
{}\SP^0_\emptyset(f):=f&\text{\quad for }m=0,\\
\SP^m_K(f):=\left(x_m^{\mathbf{1}_K(m)}\SP^{m-1}_{K\setminus\{m\}}(f)\right)'_{s,x_m} &\text{\quad for }m\in\{1,\ldots,n\}.
\end{cases}
\end{equation}
where $\mathbf{1}_{K}$ is the 
characteristic function of $K$ and $(g)'_{s,x_m}$ denotes the spherical derivative of $g$ with respect to the variable $x_m$. 

As a consequence of the identity $(g)^\circ_{s,x_m}=(x_m g)'_{s,x_m}-\RE(x_m) (g)'_{s,x_m}$, valid for every slice function $g$ w.r.t.\ $x_m$ and every $m\in\{1,\ldots,n\}$, it follows that the functions $\SP^m_K(f)$ can be written as combinations of the \emph{truncated spherical $\epsilon$-derivatives} $\SD_\epsilon f$ of $f$ (see \cite[Definition~2.24]{SeveralA} and \S\ref{sec:Preliminaries}) and vice versa. 
From \cite[Proposition~2.23]{SeveralA} it follows that every slice function $f\in\SF(\OO)$ is a slice function w.r.t.\ $x_1$ and the functions $\SP^{m-1}_{K\setminus\{m\}}(f)$ and $x_m\SP^{m-1}_{K\setminus\{m\}}(f)$ are slice functions w.r.t.\ $x_m$. Therefore the definition given in \eqref{def:sp} is well-posed.

The functions $\SP^m_K(f)$ are defined on the dense subset $\OO\setminus\R_m$ of $\OO$, where 
\[
\R_m:=\textstyle\bigcup_{k=1}^m\{(x_1,\ldots,x_n)\in\hh^n\,|\,x_k\in\R\}.
\]
The common domain of definition of the functions $\SP^m_K(f)$ is the dense subset $\OO\setminus\R_\bullet$ of $\OO$, where $\R_\bullet=\R_n=\bigcup_{k=1}^n\{(x_1,\ldots,x_n)\in \hh^n\,|\,x_k\in\R\}$.

In the following, given $H=\{h_1,\ldots,h_p\}\in\Pm(n)$ with $h_1<\cdots <h_p$, we will denote by $J_H$ the product  $J_{h_1}\cdots J_{h_p}$.

\begin{proposition}\label{pro:SP_slice}
Let $f\in\SF(\OO)$, with $\OO=\OO_D$ an axially symmetric open set in $\hh^n$. 
For every $m\in\{0,\ldots,n\}$ and $K\in\Pm(m)$, the functions $\SP^m_K(f)$ are slice functions of $n$ variables in $\OO\setminus\R_\bullet$. More precisely, $\SP^m_K(f)$ is a slice function in $\OO\setminus\R_m\supseteq\OO\setminus\R_\bullet$. Moreover, for every $y\in\OO\setminus\R_m$, $\SP^m_K(f)$ is constant on the set $(\s_{y_1}\times\cdots\times\s_{y_{m}}\times\{y_{m+1}\}\times\cdots\times\{y_n\})\cap\OO$.
\end{proposition}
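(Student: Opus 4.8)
The plan is to argue by induction on $m\in\{0,1,\ldots,n\}$, establishing simultaneously both assertions: that $\SP^m_K(f)$ is a slice function of $n$ variables on $\OO\setminus\R_m$, and that for every $y\in\OO\setminus\R_m$ it is constant on $(\s_{y_1}\times\cdots\times\s_{y_m}\times\{y_{m+1}\}\times\cdots\times\{y_n\})\cap\OO$. The base case $m=0$ is immediate: $\R_0=\emptyset$, so $\SP^0_\emptyset(f)=f\in\SF(\OO)$ is a slice function of $n$ variables on all of $\OO$, and the constancy assertion is vacuous since the relevant set reduces to the single point $\{y\}$. The statement on the common domain $\OO\setminus\R_\bullet$ then follows from the sharper one, because $\R_m\subseteq\R_n=\R_\bullet$ gives $\OO\setminus\R_\bullet\subseteq\OO\setminus\R_m$.

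For the inductive step I would fix $m\ge1$ and $K\in\Pm(m)$, set $K':=K\setminus\{m\}\in\Pm(m-1)$ and $g:=\SP^{m-1}_{K'}(f)$, so that by \eqref{def:sp} one has $\SP^m_K(f)=\bigl(x_m^{\mathbf{1}_K(m)}g\bigr)'_{s,x_m}$. By the inductive hypothesis $g$ is a slice function of $n$ variables on $\OO\setminus\R_{m-1}$. First I would check that the operand $x_m^{\mathbf{1}_K(m)}g$ is again a slice function of $n$ variables: multiplication by the coordinate function $x_m$ is the slice product with a slice-preserving slice function and hence preserves the class of slice functions (in particular $g$ and $x_m g$ are slice functions w.r.t.\ $x_m$, as already recorded for the well-posedness of \eqref{def:sp} via \cite[Proposition~2.23]{SeveralA}).

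Next I would show that applying the spherical derivative $(\,\cdot\,)'_{s,x_m}$ keeps the result a slice function of all $n$ variables, now on the smaller set $\OO\setminus\R_m$. The shrinking of the domain is forced by the factor $\IM(x_m)^{-1}$, which requires $x_m\notin\R$: intersecting the domain $\OO\setminus\R_{m-1}$ of $g$ with $\{x_m\notin\R\}$ yields $\OO\setminus(\R_{m-1}\cup\{x_m\in\R\})=\OO\setminus\R_m$, which is again axially symmetric. That the spherical value and spherical derivative with respect to $x_m$ send slice functions of $n$ variables to slice functions of $n$ variables is the content I would extract from the one-variable characterization of sliceness \cite[Proposition~2.23]{SeveralA}; alternatively it can be read off directly at the level of stem functions, since on the components $F=\sum_H e_H F_H$ these operations amount to selecting the part with $m\notin H$ (resp.\ the part with $m\in H$, reindexed by $e_H\mapsto e_{H\setminus\{m\}}$ and divided by $\beta_m$), and both respect the stem conditions \eqref{eq:stem} for every index $h\ne m$ thanks to the commutativity of the structures $\J_1,\ldots,\J_n$.

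Finally, for the constancy I would argue directly from the formula
\[
\SP^m_K(f)(x)=\tfrac12\,\IM(x_m)^{-1}\Bigl(x_m^{\mathbf{1}_K(m)}g(x)-\overline{x_m}^{\,\mathbf{1}_K(m)}g(x^{(m)})\Bigr),
\]
where $x^{(m)}$ denotes $x$ with $x_m$ replaced by $\overline{x_m}$. Fixing $y\in\OO\setminus\R_m$ and letting $x$ range over the product of spheres, independence of $J_m$ is exactly the one-variable fact that the spherical derivative w.r.t.\ $x_m$ is constant on $\s_{y_m}$, applied with the remaining variables as fixed parameters; independence of $J_1,\ldots,J_{m-1}$ follows because $\IM(x_m)^{-1}$ and $x_m^{\mathbf{1}_K(m)}$ depend only on $x_m$, while $g(x)$ and $g(x^{(m)})$ are unchanged as $J_1,\ldots,J_{m-1}$ vary by the inductive constancy hypothesis (with the $m$-th slot held fixed at $x_m$, resp.\ $\overline{x_m}$). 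Combining the two independences gives constancy on the whole set. I expect the first assertion, rather than the constancy, to be the main obstacle: the delicate point is to guarantee that taking the spherical derivative in $x_m$ does not destroy the slice structure in the other variables $x_j$ ($j\ne m$), which is precisely where the one-variable characterization of $n$-variable sliceness carries the weight, whereas the constancy statement is a direct computation from the inductive hypothesis.
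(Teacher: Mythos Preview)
Your approach is essentially the same as the paper's: induction on $m$, with the stem-function computation carrying the weight. The paper makes this more explicit by taking as inductive invariant the precise form of the inducing stem function, namely
\[
G^m_K=\sum_{H\in\Pm(n),\,H\cap\{1,\ldots,m\}=\emptyset}e_HG_H,
\]
from which both $n$-variable sliceness and the constancy on $\s_{y_1}\times\cdots\times\s_{y_m}\times\{y_{m+1}\}\times\cdots\times\{y_n\}$ are read off at once. Your pair ``sliceness $+$ constancy'' is equivalent to this single structural hypothesis, and the stem-function calculation you sketch (selecting $m\notin H$ for the spherical value, reindexing $e_H\mapsto e_{H\setminus\{m\}}$ with a factor $\beta_m^{-1}$ for the spherical derivative) is exactly what the paper writes out.

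One small caution: your first route, invoking \cite[Proposition~2.23]{SeveralA} to conclude that $(\,\cdot\,)'_{s,x_m}$ sends $n$-variable slice functions to $n$-variable slice functions, is not quite right as stated. That proposition runs in the other direction---from $n$-variable sliceness to one-variable sliceness in successive coordinates---and does not by itself give back $n$-variable sliceness after a one-variable spherical derivative. The stem-function route you offer as an alternative is the one that actually works, and it is the one the paper uses. Your separate constancy argument (independence of $J_m$ from the one-variable spherical-derivative fact, independence of $J_1,\ldots,J_{m-1}$ from the inductive hypothesis) is correct and gives the same conclusion the paper obtains from the form of $G^m_K$.
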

\begin{proof}
Let $H_m=H\cap\{1,\ldots,m\}$ for $m\ge1$ and $H_0=\emptyset$. 
We prove by induction on $m$ that $\SP^m_K(f)$ is induced by a stem function of the form
\begin{equation}\label{eq:gk}
G^m_K=
\sum_{H\in\Pm(n),H_m=\emptyset}e_HG_H.
\end{equation}
When $m=0$, $\SP^0_\emptyset(f)=f\in\SF(\OO)$. Let $m>0$ and assume that $\SP^{m-1}_{K\setminus\{m\}}(f)$ is a slice function in $\OO\setminus\R_{m-1}$, induced by a stem function 
\[
G^{m-1}_{K\setminus\{m\}}=
\sum_{H\in\Pm(n),H_{m-1}=\emptyset}e_HG_H.
\]
We can decompose the stem function as
\[
G^{m-1}_{K\setminus\{m\}}=
\sum_{H\in\Pm(n),H_m=\emptyset}e_HG_H+
\sum_{H\in\Pm(n),H_{m-1}=\emptyset,H\ni m}e_HG_H.
\]
Therefore, if $x=(\alpha_1+J_1\beta_1,\ldots,\alpha_n+J_n\beta_n)\in\OO\setminus\R_{m-1}$ and  $z=(\alpha_1+i\beta_1,\ldots,\alpha_n+i\beta_n)\in D$, it holds
\begin{align*}
\SP^{m-1}_{K\setminus\{m\}}(f)(x)&=
\sum_{H\in\Pm(n),H_m=\emptyset}J_HG_H(z)+
\sum_{H\in\Pm(n),H_{m-1}=\emptyset,H\ni m}J_HG_H(z)\\
&=
\sum_{H\in\Pm(n),H_m=\emptyset}J_HG_H(z)+J_m
\sum_{H\in\Pm(n),H_{m-1}=\emptyset,H\ni m}J_{H\setminus\{m\}}G_H(z)
\end{align*}
from which we infer that $\left(\SP^{m-1}_{K\setminus\{m\}}(f)\right)'_{s,x_m}$ and  $\left(\SP^{m-1}_{K\setminus\{m\}}(f)\right)^\circ_{s,x_m}$  are induced, respectively, by the stem functions of $n$ variables
\[
\sum_{H\in\Pm(n),H_{m-1}=\emptyset,H\ni m}e_{H\setminus\{m\}}\beta_m^{-1}G_H\text{\quad and\quad}
\sum_{H\in\Pm(n),H_m=\emptyset}e_HG_H.
\]
Since $(x_m g)'_{s,x_m}=\alpha_m (g)'_{s,x_m}+(g)^\circ_{s,x_m}$, it holds
\[
\left(x_m\SP^{m-1}_{K\setminus\{m\}}(f)\right)'_{s,x_m}=\I\left(
\sum_{H\in\Pm(n),H_{m-1}=\emptyset,H\ni m}e_{H\setminus\{m\}}\alpha_m\beta_m^{-1}G_H+
\sum_{H\in\Pm(n),H_m=\emptyset}e_HG_H\right)
\]
and formula \eqref{eq:gk} is proved for every $m$ and $K$. The form of the stem function \eqref{eq:gk} proves immediately the last statement, since
\[
\SP^m_K(f)(x)=
\sum_{H\in\Pm(n),H_m=\emptyset}J_HG_H(z)=\SP^m_K(f)(y)
\]
for every $x\in\s_{y_1}\times\cdots\times\s_{y_{m}}\times\{y_{m+1}\}\times\cdots\times\{y_n\}$.
\end{proof}

The family of functions $\{\SP^m_K(f)\}_{m,K}$ provides a new characterization of slice regularity in several variables, based on the one given in \cite[Theorem~3.23]{SeveralA}.  
Let $y=(y_1,\ldots,y_n)\in \OO$ and $h\in\{1,\ldots,n\}$. Denote by $\OO_{h}(y)$ the subset of $\hh$ defined by
\begin{equation}\label{def:omegay}
\OO_{h}(y):=\{a\in \hh:(y_1,\ldots,y_{h-1},a,y_{h+1},\ldots,y_n)\in\OO\}.
\end{equation}
As shown in \cite[\S2.3]{SeveralA}, if $\OO$ is axially symmetric in $\hh^n$, then every set $\OO_h(y)$ is axially symmetric in $\hh$.
Let $g:\OO\to \hh$ be a function. We say that \emph{$g$ is a slice function w.r.t.\ $x_h$} if, for each $y=(y_1,\ldots,y_n)\in\OO$, the restriction function $g_h^{(y)}:\OO_h(y)\to \hh$, sending $x_h$ into $g_h^{(y)}(x_h):=g(y_1,\ldots,y_{h-1},x_h,y_{h+1},\ldots,y_n)$, is a slice function. 
We say that \emph{$g$ is a slice-regular function w.r.t.\ $x_h$} if, for each $y\in\OO$, the function $g_h^{(y)}:\OO_{h}(y)\to \hh$ is a slice-regular function.

Let 
$m\in\{0,\ldots, n-1\}$ and $K\in\Pm(m)$. For $y\in\OO\setminus\R_\bullet$, let $\SP^{m,y}_K(f)$ denote the function defined for $x_{m+1}\in\OO_{m+1}(y)\setminus\R$  by
\begin{equation}\label{eq:smy}
\SP^{m,y}_K(f)(x_{m+1}):=\SP^m_K(f)^{(y)}(x_{m+1})=\SP^m_K(f)(y_1,\ldots,y_m,x_{m+1},y_{m+2},\ldots,y_n).
\end{equation}
In particular, for $m=0$ we have $\SP^{0,y}_\emptyset(x_1)=f(x_1,y_2,\ldots,y_n)$. 

The functions $\SP^m_K(f)$ provide another one-variable interpretation of slice regularity, equivalent to the one described in \cite[\S3.4]{SeveralA}). It holds:

\begin{proposition}\label{pro:1varChar}
A slice function 
$f\in\SF^1(\OO)$ in an axially symmetric open set $\OO\subseteq\hh^n$ is slice-regular in $\OO$ if and only if  $f$ is slice-regular w.r.t.\ $x_1$ and for every $m\in\{1,\ldots,n-1\}$ and $K\in\Pm(m)$, the functions $\SP^m_K(f)$ are slice-regular w.r.t.\ $x_{m+1}$, 
 i.e., $\SP^{m,y}_K(f)\in\sr(\OO_{m+1}(y)\setminus\R)$ for every $y\in\OO\setminus\R_\bullet$. 
\end{proposition}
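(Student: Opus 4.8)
The plan is to reduce the proposition to the one-variable characterization of slice-regularity already recorded from \cite[Proposition 2.23 and Theorem 3.23]{SeveralA}, which is stated in terms of the truncated spherical $\epsilon$-derivatives $\SD_\epsilon f$ rather than the functions $\SP^m_K(f)$. That result says that $f\in\sr(\OO)$ if and only if $f$ is slice-regular w.r.t.\ $x_1$ and, for every $h\in\{2,\ldots,n\}$ and every $K\in\Pm(n)$, the function $\SD_\epsilon f$ (with $\epsilon=\mathbf{1}_K$) is slice-regular w.r.t.\ $x_h$. Writing $h=m+1$, the proposition will follow once I show that, for each fixed $m\in\{1,\ldots,n-1\}$, the condition ``$\SP^m_K(f)$ is slice-regular w.r.t.\ $x_{m+1}$ for all $K\in\Pm(m)$'' is equivalent to the condition ``$\SD_\epsilon f$ is slice-regular w.r.t.\ $x_{m+1}$ for all $\epsilon$ supported in $\{1,\ldots,m\}$'', the slice-regularity of $f$ w.r.t.\ $x_1$ being common to both formulations.

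To compare the two families I would make precise, by induction on $m$, the passage announced before Proposition~\ref{pro:SP_slice}: each $\SP^m_K(f)$ is a finite combination of the $\SD_{\mathbf{1}_L} f$ ($L\subseteq\{1,\ldots,m\}$), and conversely, with coefficients that are \emph{real-valued and depend only on} $\RE(x_1),\ldots,\RE(x_m)$. The inductive step uses the recursion \eqref{def:sp} together with the identity $(x_m g)'_{s,x_m}=\RE(x_m)\,(g)'_{s,x_m}+(g)^\circ_{s,x_m}$: when $m\notin K$ one has $\SP^m_K(f)=\SD^1_{x_m}\SP^{m-1}_K(f)$, while when $m\in K$ one has $\SP^m_K(f)=\RE(x_m)\,\SD^1_{x_m}\SP^{m-1}_{K\setminus\{m\}}(f)+\SD^0_{x_m}\SP^{m-1}_{K\setminus\{m\}}(f)$. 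Since $\SD^1_{x_m}$ and $\SD^0_{x_m}$ are precisely the operations that, read on the $\SD_\epsilon$, insert or omit the index $m$, and since the coefficient $\RE(x_m)$ is real and constant in $x_m$ (hence factors through the $x_m$-spherical operators), a short block-matrix computation—anchored at $\SP^1_\emptyset(f)=\SD^1_{x_1}f$ and $\SP^1_{\{1\}}(f)=\RE(x_1)\SD^1_{x_1}f+\SD^0_{x_1}f$—shows the change of families is invertible, with both it and its inverse having entries that are real polynomials in $\RE(x_1),\ldots,\RE(x_m)$.

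The last step is the transfer of slice-regularity in the variable $x_{m+1}$. Fix $y\in\OO\setminus\R_\bullet$ and restrict every function to $x_{m+1}\in\OO_{m+1}(y)\setminus\R$. Since the coefficients of the change of families involve only $\RE(x_1),\ldots,\RE(x_m)$, after fixing $y_1,\ldots,y_m,y_{m+2},\ldots,y_n$ they become fixed real numbers, so the restricted functions $\SP^{m,y}_K(f)$ and the restricted $\SD_\epsilon f$ are related by an invertible matrix with constant real entries. As $\sr(\OO_{m+1}(y)\setminus\R)$ is a right $\hh$-module, hence closed under real-linear combinations, slice-regularity w.r.t.\ $x_{m+1}$ of one family is equivalent to that of the other. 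Letting $y$ and $m$ vary and invoking \cite[Proposition 2.23 and Theorem 3.23]{SeveralA} yields the stated equivalence.

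I expect the only real obstacle to be the bookkeeping in the inductive step: one must verify that applying the $x_m$-spherical operators to a combination whose coefficients already involve $\RE(x_1),\ldots,\RE(x_{m-1})$ leaves those coefficients untouched—they are real and constant in $x_m$, so they commute past $\IM(x_m)^{-1}$ in the definitions of the spherical value and derivative—and produces exactly the two neighbouring truncated derivatives, so that invertibility of the overall transformation is preserved at each level. Once this is settled, the regularity transfer and the appeal to the known one-variable characterization are routine.
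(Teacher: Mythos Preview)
Your proposal is correct and follows essentially the same approach as the paper: both reduce to the one-variable characterization of \cite[Proposition~2.23 and Theorem~3.23]{SeveralA} by observing that the families $\{\SP^m_K(f)\}_{K\in\Pm(m)}$ and $\{\SD_\epsilon f\}_{\epsilon}$ are related by an invertible transformation built from the identity $(x_m g)'_{s,x_m}=\RE(x_m)(g)'_{s,x_m}+(g)^\circ_{s,x_m}$, so that slice-regularity w.r.t.\ $x_{m+1}$ transfers between them. The paper's proof is terser---it simply invokes the combination relationship announced before Proposition~\ref{pro:SP_slice} and then cites condition~$(\ast)$ and Theorem~3.23---while you spell out the inductive bookkeeping and the reason the real coefficients in $\RE(x_1),\ldots,\RE(x_m)$ commute past the $x_m$-spherical operators; this added detail is sound and does not deviate from the paper's strategy.
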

\begin{proof}
Let $\OO=\OO_D$ and $f=\I(F)$, with $F=\sum_{K\in\Pm(n)}e_KF_K:D\to \hh\otimes\R^{2^n}$. As already observed above, the functions $\SP^m_K(f)$ can be written as combinations of the truncated spherical $\epsilon$-derivatives $\SD_\epsilon f$  
and vice versa. This fact implies that the assumption on the functions $\SP^m_K(f)$ is equivalent to the following condition: 

\smallskip
\noindent
\box1{\begin{minipage}[t][][c]{.05\textwidth}
$(\ast)$
\end{minipage}}
\hfill
\noindent
\box0{\begin{minipage}[t]{.90\textwidth}
\emph{The slice function $f$ is slice-regular w.r.t.\ $x_1$ and, for each $h\in\{2,\ldots,n\}$ and each function $\epsilon:\{1,\ldots,h-1\}\to\{0,1\}$, the truncated spherical $\epsilon$-derivative $\SD_\epsilon f$ of $f$ is slice-regular w.r.t.\ $x_h$, i.e., $(\SD_\epsilon f)^{(y)}\in\sr(\OO_h(y)\setminus\R)$ for every $y\in\OO\setminus\R_\bullet$. }\label{star}
\end{minipage}}
\medskip


\noindent
Theorem 3.23 in \cite{SeveralA} permits to conclude.
\end{proof}

We can refine the statement of Proposition \ref{pro:1varChar} and show that it continues to hold under the weaker assumption that $f\in\SF(\OO)\cap \mathcal C^0(\OO)$. 

\begin{proposition}\label{pro:1varChar2}
Let $f\in\SF(\OO)\cap \mathcal C^0(\OO)$ be a continuous slice function in an axially symmetric open set $\OO\subseteq\hh^n$.
Assume that $f$ is slice-regular w.r.t.\ $x_1$ and for every $m\in\{1,\ldots,n-1\}$ and $K\in\Pm(m)$, the functions $\SP^m_K(f)$ are slice-regular w.r.t.\ $x_{m+1}$, i.e., $\SP^{m,y}_K(f)\in\sr(\OO_{m+1}(y)\setminus\R)$ for every $y\in\OO\setminus\R_\bullet$. Then $f$ is slice-regular in $\OO$.
\end{proposition}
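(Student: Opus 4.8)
The statement asserts that the $\mathcal C^1$-hypothesis in Proposition \ref{pro:1varChar} can be dropped, so its entire content is a regularity upgrade: the plan is to prove that the continuous stem function $F$ inducing $f$ is in fact holomorphic with respect to all the structures $\J_1,\dots,\J_n$, which is exactly slice-regularity. Write $f=\I(F)$ with $F=\sum_{K\in\Pm(n)}e_KF_K\colon D\to\hh\otimes\R^{2^n}$ continuous. The only analytic input I would use is the one-variable fact that a slice-regular function of a single quaternionic variable is real-analytic and is induced by a stem function holomorphic in its complex variable; thus each separate one-variable slice-regularity hypothesis yields genuine holomorphicity, not merely smoothness, in the corresponding complex variable.

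First I would establish that $F$ is separately holomorphic, i.e.\ $\dibar_hF=0$ for each $h$ individually. For $h=1$ this is immediate: since $f$ is slice-regular with respect to $x_1$, for every fixed $(y_2,\dots,y_n)$ the restriction is a one-variable slice-regular function, whence $F$ is holomorphic in $z_1$ and $\dibar_1F=0$. For $h=m+1$ with $1\le m\le n-1$ I would use the explicit form \eqref{eq:gk} of the stem functions $G^m_K$ inducing $\SP^m_K(f)$ from Proposition \ref{pro:SP_slice}. Slice-regularity of $\SP^m_K(f)$ with respect to $x_{m+1}$ gives $\dibar_{m+1}G^m_K=0$ and holomorphicity in $z_{m+1}$. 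Now the $2^m$ conditions $\dibar_{m+1}G^m_K=0$, $K\in\Pm(m)$, account for all the component-blocks of $\dibar_{m+1}F$ indexed by $H\cap\{1,\dots,m\}$: by \eqref{def:sp} the passage between $F$ and the family $\{G^m_K\}_{K\in\Pm(m)}$ involves only multiplication by functions of the variables $(\alpha_j,\beta_j)$ with $j\le m$ and the application of the structures $\J_1,\dots,\J_m$, and all of these commute with $\dibar_{m+1}$. Since, as already noted, the $\SP^m_K(f)$ and the truncated spherical $\epsilon$-derivatives determine one another, I would read off $\dibar_{m+1}F=0$, so that $F$ is holomorphic in $z_{m+1}$ as well.

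Finally I would pass from separate to joint holomorphicity. Away from the thin set where some coordinate $z_h$ is real, $F$ is separately holomorphic in each $z_h$; simultaneously diagonalizing the commuting structures $\J_1,\dots,\J_n$ over $\C$ turns the real components of $F$ into finitely many scalar functions, each separately holomorphic (in $z_h$ or $\overline z_h$) in every variable, so Hartogs' theorem yields joint holomorphicity there. Since $f$ is continuous, $F$ is locally bounded near the real locus, and Riemann's removable-singularity theorem extends the holomorphicity across it, giving $F\in\mathcal C^\infty(D)$ holomorphic on all of $D$. Hence $f\in\SF^1(\OO)$ and Proposition \ref{pro:1varChar} applies; equivalently, joint holomorphicity of $F$ is by definition the slice-regularity of $f$. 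The main obstacle is the middle step: the bookkeeping needed to check that the combined conditions $\dibar_{m+1}G^m_K=0$ are equivalent to $\dibar_{m+1}F=0$, together with justifying both the Hartogs passage for the commuting structures and the extension across the real locus.
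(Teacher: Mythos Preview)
Your proposal is correct and follows the same overall architecture as the paper's proof: derive separate holomorphicity of the stem data from the one-variable slice-regularity hypotheses, extend across the real locus using continuity, and upgrade to joint holomorphicity. The difference lies in where the complex analysis is performed. The paper does not work directly with $F$ and the commuting structures $\J_1,\dots,\J_n$; instead it fixes $J\in\s$, restricts to $f_J:=f|_{\OO_D(J)}$ on the slice $(\C_J)^n$, and observes that the Cauchy--Riemann equations \eqref{eq:CR} on $D_\bullet$ translate into $\partial_{\alpha_h}f_J+J\,\partial_{\beta_h}f_J=0$ for every $h$. This reduces the problem to a genuine $\hh$-valued holomorphic map of $n$ complex variables in the ordinary sense, so Painlev\'e's theorem (applied variable by variable to the $\C_J$-components of $f_J$) removes the real locus and Osgood's lemma (rather than Hartogs) yields joint holomorphicity; the paper then reads off $F\in\mathcal C^1$ from formula~(8) of \cite{SeveralA} and concludes via \cite[Proposition~3.13]{SeveralA}.

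The trade-off is this: your diagonalization of the $\J_h$ is conceptually natural and keeps the argument on the stem side throughout, but it forces you to do the bookkeeping you flagged---showing that $\dibar_{m+1}G^m_K=0$ for all $K\in\Pm(m)$ is equivalent to $\dibar_{m+1}F=0$---and to justify Hartogs for the diagonalized scalar pieces. The paper's passage to $f_J$ collapses all of this: on a fixed slice there is only one complex structure, the equivalence of condition~$(\ast)$ with slice-regularity on $\OO\setminus\R_\bullet$ comes directly from Proposition~\ref{pro:1varChar}, and Painlev\'e and Osgood apply without further preparation. Both routes work, but the slice restriction is the shorter path.
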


\begin{proof}
Let $\OO=\OO_D$ and $f=\I(F)$, with $F=\sum_{K\in\Pm(n)}e_KF_K:D\to \hh\otimes\R^{2^n}$. As in the proof of Proposition \ref{pro:1varChar}, 
the assumption on the functions $\SP^m_K(f)$ is equivalent to the above condition (*). 
In view of Proposition \ref{pro:1varChar}, $f$ is slice-regular in $\OO\setminus\R_\bullet$. 
Therefore for every $h\in\{1,\ldots,n\}$ and every $K\in\Pm(n)$ with $h\not\in K$, it holds
\begin{equation}\label{eq:CR}
\partial_{\alpha_h}F_{K}(z)=\partial_{\beta_h}F_{K\cup\{h\}}(z),\quad \partial_{\beta_h}F_{K}(z)=-\partial_{\alpha_h}F_{K\cup\{h\}}(z)
\end{equation}
for all $z\in D_\bullet$. Let $J\in\s$ be fixed, let $\OO_D(J)=\OO\cap(\C_J)^n$ and let $f_J:\OO_D(J)\to \hh$ be the restriction of $f$ on $\OO_D(J)$. For every $h\in\{1,\ldots,n\}$ and $x=(\alpha_1+J\beta_1,\ldots,\alpha_n+J\beta_n)\in\OO_D(J)$, it holds
\[
f_J(x)=
\sum_{K\in\Pm(n),h\not\in K}J^{|K|}(F_K(z)+JF_{K\cup\{h\}}(z)).
\] 
Thanks to \eqref{eq:CR}, we get that
\begin{equation}\label{eq:CR_fJ}
\partial_{\alpha_h}f_J(x)+J\partial_{\beta_h}f_J(x)=0
\end{equation}
for all $x\in\OO_D(J)\setminus\R^n$ and all $h\in\{1,\ldots,n\}$, i.e., $f_J$ is separately holomorphic in $x_1,\ldots,x_n$ in $\OO_D(J)\setminus\R^n$ w.r.t.\ the complex structure defined by the left multiplication by $J$. Since $f_J$ is continuous on $\OO_D(J)$, by Painlev\'e's Theorem applied to the $\cc_J$-components of $f_J$ we get that $f_J$ is separately holomorphic in the whole set $\OO_D(J)$. Osgood Lemma permits to conclude that $f_J$ is holomorphic in $\OO_D(J)$. In particular, in view of formula (8) in \cite{SeveralA}, 
the components $F_K$ of $F$ are of class $C^1$ in $D$, i.e., $f\in\SF^1(\OO)$. 
Finally, \cite[Proposition~3.13]{SeveralA} yields that $f$ is slice-regular in the whole set $\OO$. 
\end{proof}

\begin{remark}\label{rem:extend}
If $f$ is a slice function of class $\mathcal C^1$ with respect to one variable in $\OO'\subseteq\hh$, then $f$ is slice-regular if and only if $\sd f=-\dcf f$ in $\OO'\setminus\rr$ \cite[Corollary~3.6.2]{Harmonicity} (observe that in Ref.\cite{Harmonicity} the operator $\dcf$ differs by a factor 2 w.r.t.\ our definition).
Hence the spherical derivative $\sd f$ extends real analytically to $\OO'$. Definition \ref{def:sp} and Proposition~\ref{pro:1varChar} imply that if $f\in\sr(\OO)$, then the functions $\SP^m_K(f)$ extend real analytically to the whole $\OO$. We will denote by the same symbol $\SP^m_K(f)$ also these extensions on $\OO$.
\end{remark}

We now recall the announced extension of the Almansi-type decomposition to slice-regular functions of several variables, proved in \cite[Theorem 5.1]{Binosi1}. 

\begin{theorem}[Almansi Theorem in several quaternionic variables \cite{Binosi1}]
\label{teo:Almansi_several}
Let $f$ be slice-regular in an axially symmetric open set $\OO\subseteq\hh^n$. For every fixed $m\in\{1,\ldots, n\}$, the family of $2^m$ quaternionic-valued real analytic slice functions $\{\SP^m_K(f)\}_{K\in\Pm(m)}$ decomposes $f$ in the form 
\[
f(x)=\sum_{K\in\Pm(m)}(-\overline{x})_{K^c}\SP^m_K(f)(x)=\sum_{K\in\Pm(m)}(-\overline{x})_{K^c}\cdot\SP^m_K(f)(x)
\]
for every $x\in\OO$, where $K^c=\{1,\ldots,m\}\setminus K$,  
$(-\overline{x})_K=(-\overline{x}_{k_1})(-\overline{x}_{k_2})\cdots (-\overline{x}_{k_p})$ for every $K=\{k_1,\ldots,k_p\}\in\Pm(m)$ with $k_1<k_2<\cdots <k_p$ and $\cdot$ denotes the slice product of slice functions.
Moreover, if $m=n$, then every function $\SP^n_K(f)$ is separately zonal harmonic (with pole 1) in the variables $x_1,\ldots, x_n$. If $m<n$, then the functions $\SP^m_K(f)$ are separately zonal harmonic (with pole 1) in  $x_1,\ldots, x_m$ and slice-regular w.r.t.\ $x_{m+1}$ in $\OO_{m+1}(y)$.
\end{theorem}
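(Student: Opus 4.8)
The plan is to argue by induction on $m$, at each step applying the one-variable Almansi decomposition of Theorem~\ref{teo:Almansi} in the single variable $x_m$ and then reindexing the subsets. For $m=1$, since $f$ is slice-regular it is in particular slice-regular w.r.t.\ $x_1$, so Theorem~\ref{teo:Almansi} applied in $x_1$ (the remaining variables being parameters) gives $f=(x_1f)'_{s,x_1}-\overline{x_1}\,f'_{s,x_1}$; by \eqref{def:sp} the two coefficients are exactly $\SP^1_{\{1\}}(f)$ and $\SP^1_\emptyset(f)$, which is the asserted formula over $\Pm(1)$. For the inductive step I assume the (pointwise) decomposition over $\Pm(m-1)$. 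By Proposition~\ref{pro:1varChar} each $\SP^{m-1}_{K'}(f)$ with $K'\in\Pm(m-1)$ is slice-regular w.r.t.\ $x_m$, so Theorem~\ref{teo:Almansi} in $x_m$ yields $\SP^{m-1}_{K'}(f)=\SP^m_{K'\cup\{m\}}(f)-\overline{x_m}\,\SP^m_{K'}(f)$, where I use \eqref{def:sp} together with $m\notin K'$ to identify the Almansi coefficients $\bigl(x_m\,\SP^{m-1}_{K'}(f)\bigr)'_{s,x_m}$ and $\bigl(\SP^{m-1}_{K'}(f)\bigr)'_{s,x_m}$ with $\SP^m_{K'\cup\{m\}}(f)$ and $\SP^m_{K'}(f)$. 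Substituting into the inductive hypothesis and splitting $\Pm(m)$ into the subsets $K$ with $m\in K$ (arising from $\SP^m_{K'\cup\{m\}}$, with $K^c=K'^c$) and those with $m\notin K$ (arising from $-\overline{x_m}\,\SP^m_{K'}$, with $K^c=K'^c\cup\{m\}$ and $m=\max K^c$), and using associativity of the quaternionic product together with the definition of $(-\overline{x})_{K^c}$ as an increasing product, produces $f=\sum_{K\in\Pm(m)}(-\overline{x})_{K^c}\,\SP^m_K(f)$ pointwise.

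To obtain the second (slice-product) equality I will show $(-\overline{x})_{K^c}\cdot\SP^m_K(f)=(-\overline{x})_{K^c}\,\SP^m_K(f)$ for each $K$. The key is a support argument at the level of stem functions: by \eqref{eq:gk} the stem function of $\SP^m_K(f)$ is supported on basis elements $e_H$ with $H\subseteq\{m+1,\ldots,n\}$, whereas the stem function of $(-\overline{x})_{K^c}$ is supported on $e_{H'}$ with $H'\subseteq K^c\subseteq\{1,\ldots,m\}$. Hence every pair $(H',H)$ occurring in the tensor product is disjoint with $\max H'\le m<\min H$, so that $(-1)^{|H'\cap H|}=1$ and, at each point $x$, $J_{H'}J_H=J_{H'\cup H}=J_{H'\Delta H}$; applying $\I$ to the product therefore reproduces the pointwise quaternionic product. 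The same disjoint-support computation shows that the increasing slice product of the slice-preserving factors $-\overline{x_j}$ equals their pointwise product $(-\overline{x})_{K^c}$, so the two forms of the decomposition coincide (equivalently, one may run the induction directly with slice products). I expect this to be the delicate point: slice-preservation of $(-\overline{x})_{K^c}$ by itself does \emph{not} force the identity of slice and pointwise products in several variables, because the units $J_1,\ldots,J_n$ attached to $x$ are independent and need not anticommute; what rescues the argument is precisely the order-separated support furnished by \eqref{eq:gk}. The accompanying reindexing is routine bookkeeping, but must be carried out carefully to preserve the increasing order in $(-\overline{x})_{K^c}$ and the position of $-\overline{x_m}$ as the last factor.

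For the harmonicity and regularity assertions, observe that $\SP^m_K(f)$ is built by applying, for $j=1,\ldots,m$, either $(\,\cdot\,)'_{s,x_j}$ or $(x_j\,\cdot\,)'_{s,x_j}$ to a function that, by Proposition~\ref{pro:1varChar}, is slice-regular w.r.t.\ $x_j$ at that stage; Theorem~\ref{teo:Almansi} then guarantees that the output is zonal harmonic with pole $1$ in the variable $x_j$. Since these spherical operations in distinct variables commute and preserve both harmonicity and sphere-constancy in the previously treated variables, $\SP^m_K(f)$ is separately zonal harmonic (with pole $1$) in $x_1,\ldots,x_m$; when $m<n$, its slice regularity w.r.t.\ $x_{m+1}$ is exactly the conclusion of Proposition~\ref{pro:1varChar}. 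Finally, by Remark~\ref{rem:extend} every $\SP^m_K(f)$, and hence both sides of the identity, extends real-analytically from the dense subset $\OO\setminus\R_\bullet$ to all of $\OO$, so the decomposition holds on the whole of $\OO$.
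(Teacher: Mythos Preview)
Your proposal is correct and follows essentially the same approach as the paper. The paper does not give its own proof of Theorem~\ref{teo:Almansi_several} (it is quoted from \cite{Binosi1}), but its proof of the generalization Theorem~\ref{teo:slice_Almansi_several} proceeds by the same induction on $m$, the same one-variable splitting $\SP^{m-1}_{K'}(f)=\SP^m_{K'\cup\{m\}}(f)-\overline{x_m}\,\SP^m_{K'}(f)$, and the same reindexing; for the equality of pointwise and slice products it invokes \cite[Proposition~2.52]{SeveralA} and the fact that $\SP^m_K(f)$ is $\{m+1,\ldots,n\}$-reduced, which is precisely your disjoint-support argument based on~\eqref{eq:gk}.
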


\subsection{Decomposition for slice functions of several quaternionic variables}\label{sec:AlmansiDecomposition2}

In the definition of Wirtinger operators (see \S\ref{sec:Wirtinger operators}), we will need to extend the Almansi Theorem in several quaternionic variables (Theorem \ref{teo:Almansi_several}) to slice, not necessarily regular, functions on open subsets of $\hh^n$. As one would expect, the zonal decomposition obtained in this case has no more harmonicity and regularity properties. 
We first need a one-variable characterization for general slice, not necessarily regular, functions.

\begin{proposition}\label{pro:1varChar_slice}
If $f:\OO\to\hh$  is a slice function in an axially symmetric open set $\OO\subseteq\hh^n$, then $f$ is a slice function w.r.t.\ $x_1$ and for every $m\in\{1,\ldots,n-1\}$ and $K\in\Pm(m)$, the functions $\SP^m_K(f)$ are slice functions w.r.t.\ $x_{m+1}$, 
 i.e., $\SP^{m,y}_K(f)\in\SF(\OO_{m+1}(y)\setminus\R)$ for every $y\in\OO\setminus\R_\bullet$. 
\end{proposition}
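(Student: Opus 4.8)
The statement is the non-regular counterpart of Proposition~\ref{pro:1varChar}, and the plan is to deduce it from the one-variable characterization of \cite[Proposition~2.23]{SeveralA} together with the algebraic relation between the functions $\SP^m_K(f)$ and the truncated spherical $\epsilon$-derivatives $\SD_\epsilon f$. The sliceness of $f$ with respect to $x_1$ is immediate: it is exactly the first assertion recalled from \cite[Proposition~2.23]{SeveralA} in the one-variable characterization paragraph of \S\ref{sec:sev_var}. Thus the whole content is the claim that each $\SP^m_K(f)$, regarded as a function of the single variable $x_{m+1}$ with the remaining variables frozen, is a one-variable slice function.

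First I would make precise the expansion of $\SP^m_K(f)$ in terms of truncated spherical $\epsilon$-derivatives. Arguing by induction on $m$ through the recursion \eqref{def:sp}, and using the identity $(g)^\circ_{s,x_m}=(x_m g)'_{s,x_m}-\RE(x_m)(g)'_{s,x_m}$ in the equivalent form $(x_m g)'_{s,x_m}=\RE(x_m)(g)'_{s,x_m}+(g)^\circ_{s,x_m}$, one sees that at stage $m$ the operator $g\mapsto (x_m^{\mathbf{1}_K(m)}g)'_{s,x_m}$ acts as $\SD^1_{x_m}$ when $m\notin K$ and as $\RE(x_m)\SD^1_{x_m}+\SD^0_{x_m}$ when $m\in K$. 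Iterating over $x_1,\dots,x_m$ yields an expansion
\[
\SP^m_K(f)=\sum_{\epsilon\colon\{1,\dots,m\}\to\{0,1\}}c_\epsilon\,\SD_\epsilon f,
\]
in which each coefficient $c_\epsilon$ is a product of factors equal to $1$ or to $\RE(x_j)$ with $j\le m$ (some $c_\epsilon$ possibly vanishing); in particular every $c_\epsilon$ depends only on the variables $x_1,\dots,x_m$ and is independent of $x_{m+1}$. This is the key bookkeeping step, and I expect it to be the main (though routine) obstacle: one must check that no factor involving $x_{m+1}$ is ever introduced, which is guaranteed because the recursion \eqref{def:sp} first reaches $x_{m+1}$ only at the $(m+1)$-st stage, beyond the $m$ stages producing $\SP^m_K(f)$.

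Next I would invoke \cite[Proposition~2.23]{SeveralA} with $h=m+1$: for every $\epsilon\colon\{1,\dots,m\}\to\{0,1\}$ the truncated spherical $\epsilon$-derivative $\SD_\epsilon f$ is a well-defined slice function with respect to $x_{m+1}$, that is, $(\SD_\epsilon f)^{(y)}\in\SF(\OO_{m+1}(y)\setminus\R)$ for every $y\in\OO\setminus\R_\bullet$. Fixing such a $y$ and freezing all variables except $x_{m+1}$, the coefficients $c_\epsilon$ become fixed real numbers $c_\epsilon(y)$, so by \eqref{eq:smy} the restriction $\SP^{m,y}_K(f)$ equals the finite real-linear combination $\sum_\epsilon c_\epsilon(y)\,(\SD_\epsilon f)^{(y)}$ of one-variable slice functions on $\OO_{m+1}(y)\setminus\R$. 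Since the one-variable slice functions on a fixed axially symmetric set form a right $\hh$-module, this combination is again a slice function, giving $\SP^{m,y}_K(f)\in\SF(\OO_{m+1}(y)\setminus\R)$, as required.

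As an alternative to the expansion step, one could read the conclusion directly off the stem-function form \eqref{eq:gk} established in Proposition~\ref{pro:SP_slice}: since $\SP^m_K(f)=\I(G^m_K)$ with $G^m_K=\sum_{H\in\Pm(n),\,H_m=\emptyset}e_HG_H$ supported on subsets $H\subseteq\{m+1,\dots,n\}$, isolating the dependence on $e_{m+1}$ and using the stem conditions \eqref{eq:stem} exhibits $\SP^{m,y}_K(f)$ as induced by a one-variable stem function in $x_{m+1}$. Either route reduces the proposition to the cited one-variable characterization, so the only genuine work is verifying that the data produced by the recursion never involve the variable $x_{m+1}$.
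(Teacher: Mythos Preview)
Your proposal is correct and follows essentially the same route as the paper: you reduce the sliceness of $\SP^m_K(f)$ with respect to $x_{m+1}$ to the sliceness of the truncated spherical $\epsilon$-derivatives $\SD_\epsilon f$ with respect to $x_{m+1}$, and then invoke \cite[Proposition~2.23]{SeveralA}. The paper's proof is more terse, simply asserting the equivalence of the $\SP^m_K(f)$ condition with condition $(\ast\ast)$ (already noted before Proposition~\ref{pro:1varChar}) and then citing \cite[Proposition~2.23]{SeveralA}; your version makes the expansion $\SP^m_K(f)=\sum_\epsilon c_\epsilon\,\SD_\epsilon f$ with $c_\epsilon$ depending only on $\RE(x_1),\dots,\RE(x_m)$ explicit, which is exactly the content behind that equivalence.
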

\begin{proof}
As in the proof of the previous proposition, the assumption on the functions $\SP^m_K(f)$ is equivalent to the following condition: 

\smallskip
\noindent
\box1{\begin{minipage}[t][][c]{.05\textwidth}
$(\ast\ast)$
\end{minipage}}
\hfill
\noindent
\box0{\begin{minipage}[t]{.90\textwidth}
\emph{$f$ is a slice function w.r.t.\ $x_1$ and, for each $h\in\{2,\ldots,n\}$ and each function $\epsilon:\{1,\ldots,h-1\}\to\{0,1\}$, the truncated spherical $\epsilon$-derivative $\SD_\epsilon f$ of $f$ is a slice function w.r.t.\ $x_h$, i.e., $(\SD_\epsilon f)^{(y)}\in\SF(\OO_h(y)\setminus\R)$ for every $y\in\OO\setminus\R_\bullet$. }\label{2star}
\end{minipage}}
\medskip

\noindent
If $f\in\SF(\OO)$, Proposition~2.23 in \cite{SeveralA} implies that $f$ satisfies (**). 
\end{proof}

\begin{theorem}
\label{teo:slice_Almansi_several}
Let $f\in\SF(\OO)$ be a slice function in an axially symmetric open set $\OO\subseteq\hh^n$.  Then for every fixed $m\in\{1,\ldots, n\}$ it holds 
\begin{equation}\label{eq:slice_Almansi_several}
f(x)=\sum_{K\in\Pm(m)}(-\overline{x})_{K^c}\SP^m_K(f)(x)=\sum_{K\in\Pm(m)}(-\overline{x})_{K^c}\cdot\SP^m_K(f)(x)
\end{equation}
for every $x\in\OO\setminus\R_m$. 
\end{theorem}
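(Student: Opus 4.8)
The plan is to reduce everything to the one-variable zonal decomposition and then iterate it by induction on $m$, handling one variable at a time while treating the others as spectators.

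First I would record the one-variable identity. For any slice function $g$ with respect to a single variable $x_m$, the relations recalled in \S\ref{sec:Preliminaries}, namely $g=(g)^\circ_{s,x_m}+\IM(x_m)(g)'_{s,x_m}$ and $(g)^\circ_{s,x_m}=(x_m g)'_{s,x_m}-\RE(x_m)(g)'_{s,x_m}$, combined with $-\overline{x}_m=\IM(x_m)-\RE(x_m)$, give the zonal decomposition
\[
g=(x_m g)'_{s,x_m}-\overline{x}_m\,(g)'_{s,x_m}
\]
on $\{x_m\notin\R\}$. This is precisely the one-variable decomposition of Remark~\ref{rem:sd} read in the single variable $x_m$, and it uses no regularity; note that $-\overline{x}_m$ multiplies $(g)'_{s,x_m}$ on the \emph{left}.

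Next I would prove the pointwise form of \eqref{eq:slice_Almansi_several} by induction on $m$, starting from the trivial case $m=0$, where $\Pm(0)=\{\emptyset\}$ and the formula reads $f=\SP^0_\emptyset(f)=f$. For the step $m-1\to m$, Propositions~\ref{pro:SP_slice} and~\ref{pro:1varChar_slice} ensure that each $\SP^{m-1}_K(f)$, as well as $x_m\SP^{m-1}_K(f)$, is a slice function with respect to $x_m$ on $\OO\setminus\R_{m-1}$, so the one-variable decomposition applies to it. Since $m\notin K$, the recursive definition \eqref{def:sp} gives $(\SP^{m-1}_K(f))'_{s,x_m}=\SP^m_K(f)$ and $(x_m\SP^{m-1}_K(f))'_{s,x_m}=\SP^m_{K\cup\{m\}}(f)$, whence
\[
\SP^{m-1}_K(f)=\SP^m_{K\cup\{m\}}(f)-\overline{x}_m\,\SP^m_K(f).
\]
Substituting this into the inductive hypothesis $f=\sum_{K\in\Pm(m-1)}(-\overline{x})_{K^c}\SP^{m-1}_K(f)$ splits each summand into two. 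The combinatorial bookkeeping then matches the two resulting families with the two types of subsets $L\in\Pm(m)$: for $L=K\cup\{m\}$ (case $m\in L$) the complement in $\{1,\ldots,m\}$ is $L^c=\{1,\ldots,m-1\}\setminus K$, so the coefficient is already $(-\overline{x})_{L^c}$; for $L=K$ (case $m\notin L$) the new factor $-\overline{x}_m$ sits to the left of $\SP^m_K(f)$ and to the right of $(-\overline{x})_{\{1,\ldots,m-1\}\setminus K}$, and since $m$ is the largest index, appending it last respects the increasing order defining $(-\overline{x})_{L^c}$ with $L^c=(\{1,\ldots,m-1\}\setminus K)\cup\{m\}$. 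Summing over both types yields $f=\sum_{L\in\Pm(m)}(-\overline{x})_{L^c}\SP^m_L(f)$ on $\OO\setminus\R_m$, which is the first equality.

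Finally, for the slice-product form (the second equality) I would invoke circularity. By Proposition~\ref{pro:SP_slice} the stem function of $\SP^m_K(f)$ has the form \eqref{eq:gk}, involving only basis elements $e_H$ with $H\cap\{1,\ldots,m\}=\emptyset$; hence $\SP^m_K(f)$ is circular in $x_1,\ldots,x_m$. As each coefficient $(-\overline{x})_{K^c}$ depends only on $x_1,\ldots,x_m$ (since $K^c\subseteq\{1,\ldots,m\}$), the slice product $(-\overline{x})_{K^c}\cdot\SP^m_K(f)$ coincides with the pointwise quaternionic product, by the same reasoning (relying only on circularity, \cite[Lemma~2.51]{SeveralA}) that gives the slice-product form in the slice-regular case of Theorem~\ref{teo:Almansi_several}; regularity plays no role there. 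I expect this last point --- reconciling the pointwise and slice products in the presence of non-commuting variables --- to be the only genuinely delicate step, the induction itself being a formal iteration of the one-variable zonal decomposition.
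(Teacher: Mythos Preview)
Your proposal is correct and follows essentially the same approach as the paper: induction on $m$ via the one-variable zonal identity $g=(x_mg)'_{s,x_m}-\overline{x}_m(g)'_{s,x_m}$, applied to each $\SP^{m-1}_K(f)$ using Proposition~\ref{pro:1varChar_slice}, followed by the same combinatorial regrouping.

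The only imprecision is in the slice-product step. You call $\SP^m_K(f)$ ``circular in $x_1,\ldots,x_m$'' and invoke \cite[Lemma~2.51]{SeveralA}, but that lemma requires $g$ to be circular in the paper's sense (i.e., the stem function is $\hh$-valued), which $\SP^m_K(f)$ is not unless $m=n$. What \eqref{eq:gk} actually gives is that $\SP^m_K(f)$ is $\{m+1,\ldots,n\}$-reduced, and the paper accordingly uses \cite[Proposition~2.52]{SeveralA} iteratively: since each $\overline{x}_{h_l}$ is slice-preserving and $\{h_l\}$-reduced with $h_l\le m$, one peels off the factors one at a time to obtain $\overline{x}_{h_1}\cdots\overline{x}_{h_p}\SP^m_K(f)=\overline{x}_{h_1}\cdot\ldots\cdot\overline{x}_{h_p}\cdot\SP^m_K(f)$. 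Your intuition is right and the fix is purely a matter of citing the correct tool.
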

\begin{proof}
We prove \eqref{eq:slice_Almansi_several} by induction on $m$. 
Proposition \ref{pro:1varChar_slice} says that the slice function $f$ is a slice function w.r.t.\ $x_1$ and the functions $\SP^{m}_{K}(f)\in\SF(\OO\setminus\R_m)$ are slice functions w.r.t.\ $x_{m+1}$ for every $m<n$. 
When $m=1$, then 
\[
f(x)=(x_1f)'_{s,x_1}-\overline x_1\,(f)'_{s,x_1}=\SP^1_{\{1\}}(f)(x)-\overline x_1\,\SP^1_\emptyset(f)(x)
\]
for every $x\in\OO\setminus\R_1$. Assume that formula \eqref{eq:slice_Almansi_several} is valid for the integer $m<n$. If $K\in\Pm(m)$, applying again the same decomposition valid for slice functions w.r.t.\ $x_{m+1}$, we get
\begin{align}\label{eq:smK}
\SP^m_K(f)&=(x_{m+1}\SP^m_K(f))'_{s,x_{m+1}}-\overline x_{m+1}\,(\SP^m_K(f))'_{s,x_{m+1}}=\\\notag
&=\SP^{m+1}_{K\cup\{m+1\}}(f)-\overline x_{m+1}\,\SP^{m+1}_K(f)
\end{align}
on $\OO\setminus\R_{m+1}$.
By the induction hypothesis and \eqref{eq:smK},
\begin{align*}
f(x)&=
\sum_{K\in\Pm(m)}(-\overline{x})_{K^c}\SP^m_K(f)(x)=\\
&=
\sum_{K\in\Pm(m)}(-\overline{x})_{K^c}\SP^{m+1}_{K\cup\{m+1\}}(f)(x)-\sum_{K\in\Pm(m)}(-\overline{x})_{K^c}\overline x_{m+1}\,\SP^{m+1}_K(f)(x)=\\
&=
\sum_{H\in\Pm(m+1)}(-\overline{x})_{H^c}\SP^{m+1}_H(f)(x)
\end{align*}
for every $x\in\OO\setminus\R_{m+1}$. We now prove the second equality in \eqref{eq:slice_Almansi_several}. Let $K^c=(x_{h_1},\ldots, x_{h_p})$, with $1\le h_1<\cdots<h_p\le m$. We apply iteratively \cite[Proposition~2.52]{SeveralA} to the pointwise product $x_{h_1}\cdots x_{h_p}\SP^m_K(f)(x)$. Since $\SP^m_K(f)$ is $\{m+1,\ldots,n\}$-reduced (see formula \eqref{eq:gk} expressing the stem function inducing $\SP^m_K(f)$ and see \cite[Definition~2.50]{SeveralA} for the concept of $H$-reduced slice function) and every $\overline x_{h_l}$ is slice-preserving and $\{h_l\}$-reduced, we get that the pointwise product $\overline x_{K^c}=\overline x_{h_1}\cdots \overline x_{h_p}$ coincides with the slice product $\overline x_{h_1}\cdot\ldots\cdot \overline x_{h_p}$, and
\[
\overline x_{h_1}\cdots \overline x_{h_p}\SP^m_K(f)=\overline x_{h_1}\cdots \overline x_{h_{p-1}}\left(\overline x_{h_p}\cdot\SP^m_K(f)\right)=\cdots=\overline x_{h_1}\cdot\ldots\cdot \overline x_{h_p}\cdot\SP^m_K(f)=\overline x_{K^c}\cdot \SP^m_K(f).
\]
\end{proof}

The decomposition of Theorem \ref{teo:slice_Almansi_several} is uniquely determined by the slice function $f$, as shown in the next proposition.

\begin{proposition}
\label{pro:converse_slice_Almansi_several}
Let $f\in\SF(\OO)$ be a slice function in an axially symmetric open set $\OO\subseteq\hh^n$. Let $m\in\{1,\ldots, n\}$ be fixed. Assume that there exists a collection of functions $\{g_K\}_{K\in\Pm(m)}$ 
such that for every $y\in\OO\setminus\R_m$, $g_K$ is constant on the set $\s_{y_1}\times\cdots\times\s_{y_{m}}\times\{y_{m+1}\}\times\cdots\times\{y_n\}$ and it holds 
\begin{equation}\label{eq:converse-Almansi_several}
f(x)=\sum_{K\in\Pm(m)}(-\overline{x})_{K^c}g_K(x)
\end{equation}
for every $x\in\OO\setminus\R_m$.  Then $g_K=\SP^m_K(f)$ for every $K\in\Pm(m)$. 
\end{proposition}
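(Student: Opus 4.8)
The plan is to prove the identity $g_K=\SP^m_K(f)$ by induction on $m$ (uniformly in the number $n\ge m$ of variables), at each step stripping off the highest variable $x_m$ and reducing to the uniqueness statement at level $m-1$. The only analytic inputs are the uniqueness of the one-variable zonal decomposition recorded in Remark \ref{rem:sd} and the recursion \eqref{def:sp} defining the functions $\SP^m_K(f)$. For the base case $m=1$, the hypothesis \eqref{eq:converse-Almansi_several} reads $f=g_{\{1\}}-\overline x_1\,g_\emptyset$ on $\OO\setminus\R_1$, with $g_\emptyset$ and $g_{\{1\}}$ constant on every sphere $\s_{x_1}$. Since $f$ is a slice function with respect to $x_1$ (Proposition \ref{pro:1varChar_slice}), this is the zonal decomposition of $f$ in the variable $x_1$, so Remark \ref{rem:sd} applied in $x_1$ gives $g_{\{1\}}=(x_1f)'_{s,x_1}=\SP^1_{\{1\}}(f)$ and $g_\emptyset=(f)'_{s,x_1}=\SP^1_\emptyset(f)$.

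For the inductive step I would first regroup \eqref{eq:converse-Almansi_several} according to $K'=K\setminus\{m\}\in\Pm(m-1)$. The two subsets of $\{1,\dots,m\}$ restricting to a given $K'$ are $K=K'\cup\{m\}$, for which $K^c=(K')^c:=\{1,\dots,m-1\}\setminus K'$, and $K=K'$, for which $K^c=(K')^c\cup\{m\}$ and hence $(-\overline x)_{K^c}=(-\overline x)_{(K')^c}(-\overline x_m)$, the factor $-\overline x_m$ being rightmost because $m$ is the largest index. Using associativity of the pointwise quaternionic product to pull $(-\overline x)_{(K')^c}$ out on the left, the two terms with the same $K'$ combine, and \eqref{eq:converse-Almansi_several} becomes
\[
f=\sum_{K'\in\Pm(m-1)}(-\overline x)_{(K')^c}\,\tilde g_{K'},\qquad \tilde g_{K'}:=g_{K'\cup\{m\}}-\overline x_m\,g_{K'},
\]
on $\OO\setminus\R_m$. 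Each $\tilde g_{K'}$ is constant on the sets $\s_{y_1}\times\cdots\times\s_{y_{m-1}}\times\{y_m\}\times\cdots\times\{y_n\}$, since the two $g$'s are constant on the larger spherical sets and $\overline x_m$ is fixed there. This is precisely a level-$(m-1)$ decomposition of $f$ of the form \eqref{eq:converse-Almansi_several}, so the inductive hypothesis yields $\tilde g_{K'}=\SP^{m-1}_{K'}(f)$ for every $K'\in\Pm(m-1)$.

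It then remains to recover the individual $g$'s. By Proposition \ref{pro:1varChar_slice} the function $\SP^{m-1}_{K'}(f)=\tilde g_{K'}$ is a slice function with respect to $x_m$, and $\tilde g_{K'}=g_{K'\cup\{m\}}-\overline x_m\,g_{K'}$ is its zonal decomposition in $x_m$ (the $g$'s being constant on $\s_{x_m}$). Hence Remark \ref{rem:sd} in $x_m$ gives $g_{K'\cup\{m\}}=(x_m\,\SP^{m-1}_{K'}(f))'_{s,x_m}$ and $g_{K'}=(\SP^{m-1}_{K'}(f))'_{s,x_m}$, which by the recursion \eqref{def:sp} are exactly $\SP^m_{K'\cup\{m\}}(f)$ and $\SP^m_{K'}(f)$. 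As $K'$ runs over $\Pm(m-1)$, the pairs $\{K',K'\cup\{m\}\}$ exhaust $\Pm(m)$, completing the induction.

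The step I expect to be most delicate is the application of the inductive hypothesis in the second paragraph: the regrouped decomposition is available a priori only on $\OO\setminus\R_m$, whereas the level-$(m-1)$ statement is naturally phrased on $\OO\setminus\R_{m-1}$. Since the coefficients $(-\overline x)_{(K')^c}$ involve only $x_1,\dots,x_{m-1}$ and each $\tilde g_{K'}$ depends on $x_m$ only as a fixed parameter, the clean way around this is to read the identity fiberwise: for each fixed $x_m=c\in\hh\setminus\R$ the restriction of $f$ is a slice function of the remaining $n-1$ variables, the displayed formula is a genuine level-$(m-1)$ decomposition of it over the full domain in $x_1,\dots,x_{m-1}$, and $\SP^{m-1}_{K'}$ commutes with fixing $x_m=c$ because it only involves spherical operations in $x_1,\dots,x_{m-1}$. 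One must check that this restriction is indeed slice and that the commutation holds at the level of the inducing stem functions; alternatively, one may subtract the known Almansi decomposition of Theorem \ref{teo:slice_Almansi_several} at level $m-1$ and invoke the homogeneous form of uniqueness, proved by the same fiberwise induction but avoiding any reference to the $\SP$-operators on the slices.
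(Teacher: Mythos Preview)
Your proof is correct and follows essentially the same inductive strategy as the paper: regroup the sum over $\Pm(m)$ according to whether $m\in K$, apply the level-$(m-1)$ uniqueness to the coefficients $\tilde g_{K'}=g_{K'\cup\{m\}}-\overline x_m g_{K'}$, and then invoke Remark~\ref{rem:sd} in the variable $x_m$. The one difference is that the paper first subtracts the decomposition of Theorem~\ref{teo:slice_Almansi_several} to reduce to the homogeneous case $f\equiv0$, so that the induction only has to show all $g_K$ vanish---this spares the appeal to Proposition~\ref{pro:1varChar_slice} and to the recursion \eqref{def:sp} in the inductive step; you yourself mention this alternative at the end of your proposal, and it is exactly what the paper does.
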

\begin{proof}
As a consequence of Theorem \ref{teo:slice_Almansi_several}, it suffices to prove the thesis with $f\equiv0$.
We proceed by induction on $m$. If $m=1$, formula \eqref{eq:converse-Almansi_several} reduces to $g_{\{1\}}-\overline x_1 g_\emptyset=0$. From Remark \ref{rem:sd} we deduce 
\be\label{eq:h1h2}
g_{\{1\}}=(x_1f)'_{s,x_1}=\SP^1_{\{1\}}(f)\equiv0,\quad  g_\emptyset=(f)'_{s,x_1}=\SP^1_{\emptyset}(f)\equiv0.
\eeq
Let $m>1$ and assume the statement valid for the integer $m-1$. Rewrite \eqref{eq:converse-Almansi_several} as
\begin{align*}
0&=
\sum_{K\in\Pm(m)}(-\overline x)_{K^c}g_K=\\
&=
\sum_{K\in\Pm(m),K\not\ni m}(-\overline x)_{K^c} g_K+\sum_{K\in\Pm(m),K\not\ni m}(-\overline x)_{(K\cup\{m\})^c} g_{K\cup\{m\}}=\\
&=
\sum_{H\in\Pm(m-1)}(-\overline x)_{H^c}\left(-\overline x_m g_H+ g_{H\cup\{m\}}\right).
\end{align*}
The functions $-\overline x_m g_H+ g_{H\cup\{m\}}$ are constant on $\s_{y_1}\times\cdots\times\s_{y_{m-1}}\times\{y_{m}\}\times\cdots\times\{y_n\}$ for every $y\in\OO\setminus\R_m$. From the inductive hypothesis, we get 
\[
-\overline x_m g_H+ g_{H\cup\{m\}}=0\text{\quad for every }H\in\Pm(m-1). 
\]
Since $g_H$ and $g_{H\cup\{m\}}$ are constant on $\s_{y_1}\times\cdots\times\s_{y_{m}}\times\{y_{m+1}\}\times\cdots\times\{y_n\}$, in view of Remark \ref{rem:sd} we get $g_H=g_{H\cup\{m\}}\equiv0$ for every $H\in\Pm(m-1)$, i.e., $g_K\equiv0$ for every $K\in\Pm(m)$.
\end{proof}

The functions $\SP^m_K(f)$ defined in \eqref{def:sp} were introduced in \cite{Binosi1} when $f$ is a slice-regular function in an axially symmetric domain $\OO$. 
Now we extend the definition to any open set $  U \subseteq\hh^n$ and any $f$ of class $\mathcal C^n(  U,\hh )$ by means of the Cauchy-Riemann-Fueter operators $\dcf_{x_m}$.

Let $n>1$,  $m\in\{1,\ldots,n\}$ and let $\dcf_{x_m}$ denote the \emph{Cauchy-Riemann-Fueter operator w.r.t.\ the quaternionic variable $x_m=x_{m_0}+ix_{m_1}+jx_{m_2}+kx_{m_3}$}, namely
\[\dcf_{x_m} =\frac12\left(\dd{}{x_{m_0}}+i\dd{}{x_{m_1}}+j\dd{}{x_{m_2}}+k\dd{}{x_{m_3}}\right).\]

Let $f\in\mathcal C^n(  U ,\hh)$, with $  U \subseteq\hh^n$ open (not necessarily axially symmetric). Define recursively 
functions $\A^m_K(f)$, for every $m\in\{0,\ldots, n\}$ and every $K\in\Pm(m)$, as follows: 
\begin{equation}\label{eq:amk}
\begin{cases}
\A^0_\emptyset(f):=f&\text{\quad for }m=0,\\
\A^m_K(f):=-\dcf_{x_m}\left(x_m^{\mathbf{1}_K(m)}\A^{m-1}_{K\setminus\{m\}}(f)\right) &\text{\quad for }m\in\{1,\ldots,n\}.
\end{cases}
\end{equation}
The functions $\A^m_K(f)$ are of class $\mathcal C^{n-m}(  U,\hh)$.  
From Proposition~\ref{pro:1varChar}, Corollary 3.6.2 in \cite{Harmonicity} and Remark \ref{rem:extend}, it follows that if $f\in\mathcal C^n(\OO,\hh)$ is a slice function in an axially symmetric set $\OO$, then $f\in\sr(\OO)$ if and only if $\SP^m_K(f)=\A^m_K(f)$ for every $m$ and $K$. In this case we will call the functions $\A^m_K(f)$ the \emph{Almansi components} of the slice-regular function $f$. 

\section{Locally slice-regular functions of several variables}\label{sec:Locally_slice-regular}

We now recall and generalize some definitions from \cite{CR_S_operators}. They deal with the possibility of extending the concept of slice-regularity to domains in $\hh$ or in $\hh^n$ that are not necessarily axially symmetric. We begin with the one-dimensional case, already introduced in \cite[Definition~13]{CR_S_operators}.

\begin{definition}\label{def:loc}
Let $  U \subseteq \hh$ be any open set and let $f\in\mathcal C^1(  U,\hh )$. The function $f$ is called \emph{locally strongly slice-regular} 
 if the following properties hold:
\begin{enumerate}
\item[(i)] For every $I\in S$ such that $  U _I:=  U \cap \cc_I\not=\emptyset$, the restriction 
\[f_I:=f_{|  U _I}:(  U _I,L_I)\to(\hh,L_I)\]
is holomorphic w.r.t.\ the complex structure $L_I$ defined by left multiplication by $I$.
\item[(ii)] The functions $\dcf f$ and $\dcf(xf)$ are locally constant on $\s_y\cap   U $ for every $y\in   U $.
\end{enumerate}
If property (i) holds and the functions $\dcf f$ and $\dcf(xf)$ in property (ii) are constant on $\s_y\cap   U $ for every $y\in   U $, then $f$ is called \emph{strongly slice-regular}. 
\end{definition}

Observe that condition (i) is exactly the assumption of \emph{$C$-regularity} (or \emph{Cullen regularity}) introduced by Gentili and Struppa in the original source of the theory \cite[Definition 1.2]{GeSt2007Adv}. Condition (ii) has the role to connect the restrictions of the function on different complex slices of $\hh$. It reflects what happens for quaternionic polynomials 
$\sum_j x^ja_j$.  

From Definition \ref{def:loc} it follows immediately that strongly slice-regular functions are locally strongly slice-regular. 
As observed in \cite[Proposition~17]{CR_S_operators}, if $\OO\subseteq\hh$ is an axially symmetric domain, then $f\in\sr(\OO)$ if and only if $f$ is strongly slice-regular in $\OO$, or, equivalently, $f$ is locally strongly slice-regular in $\OO$. 
As we will see below, in general this property does not hold on non axially symmetric sets.

\begin{definition}
If $U\subseteq\hh$ is any open set, we denote by $\sr(U)$ the set of strongly slice-regular functions in $U$, and by  $\sr_{loc}(U)$ the set of locally strongly slice-regular functions in $U$. For simplicity, the elements of $\sr(U)$ will be called \emph{slice-regular functions} on $U$. Thanks to \cite[Proposition~17]{CR_S_operators}, this notation is consistent with the usual one if $U$ is axially symmetric. Similarly, the elements of $\sr_{loc}(U)$ will be called 
\emph{locally slice-regular functions} on $U$. 
\end{definition}

The one-variable characterization of slice-regularity obtained in \cite[Theorem~3.23]{SeveralA} and in Proposition \ref{pro:1varChar} suggests a possible extension of Definition \ref{def:loc} to several variables. 

For $y\in U $, let $U _{m+1}(y)$ be the set defined as in \eqref{def:omegay}, that is
\[
U_{m+1}(y)=\{a\in \hh:(y_1,\ldots,y_{m},a,y_{m+2},\ldots,y_n)\in U\},
\]
and let $\A^{m,y}_K(f)$ be the one-variable functions of $x_{m+1}\in U _{m+1}(y)$ defined as in formula \eqref{eq:smy}:  
\begin{equation}\label{eq:amy}
\A^{m,y}_K(f)(x_{m+1}):=\A^{m}_K(f)^{(y)}(x_{m+1})=\A^m_K(f)(y_1,\ldots,y_m,x_{m+1},y_{m+2},\ldots,y_n).
\end{equation}
In particular, for $m=0$ we have $\A^{0,y}_\emptyset(x_1)=f(x_1,y_2,\ldots,y_n)$.

\begin{definition}\label{def:loc_several}
Let $ U \subseteq \hh^n$ ($n>1$) be any open set and let $f\in\mathcal C^n( U,\hh)$. 
The function $f$ is called \emph{locally strongly slice-regular}, or simply \emph{locally slice-regular}, and we write $f\in\sr_{loc}( U )$, if for every $y\in U $, every $m\in\{0,\ldots, n-1\}$ and every $K\in\Pm(m)$, the function $\A^{m,y}_K(f)$ belongs to $\sr_{loc}( U _{m+1}(y))$. 

If it holds that for every $y\in U $, every $m\in\{0,\ldots, n-1\}$ and every $K\in\Pm(m)$, the function $\A^{m,y}_K(f)$ belongs to $\sr( U _{m+1}(y))$, then  $f$ is called \emph{strongly slice-regular}. 
By definition,  every strongly slice-regular function in $U$ belongs to $\sr_{loc}( U )$. 
\end{definition}

For example, when $n=2$, Definition \ref{def:loc_several} means that $f\in\sr_{loc}( U )$ if and only if
for every $y\in U $, $x_1\mapsto f(x_1,y_2)\in\sr_{loc}( U _1(y))$, i.e., $f$ is locally 
slice-regular in the first variable $x_1$, and the functions $x_2\mapsto(\dcf_{x_1}f)(y_1,x_2)$ and $x_2\mapsto(\dcf_{x_1}(x_1f))(y_1,x_2)$ belong to $\sr_{loc}( U _2(y))$, i.e., they are locally 
slice-regular in the second variable $x_2$.

\begin{proposition}\label{pro:loc_several}
Let $ U \subseteq \hh^n$ ($n>1$) be any open set and let $f\in\mathcal C^n( U,\hh )$. The function $f$ is locally 
slice-regular in $U$ if and only if for every $y\in U $, 
it holds:
\begin{enumerate}
\item[(i)] 
For every $m\in\{0,\ldots, n-1\}$, $K\in\Pm(m)$ and $I\in \s$ such that $ U _{m+1}^I(y):= U _{m+1}(y)\cap \cc_I\not=\emptyset$, the restriction 
\[{\A^{m,y}_K(f)}_I:={\A^{m,y}_K(f)}_{| U _{m+1}^I(y)}:( U _{m+1}^I(y),L_I)\to(\hh,L_I)\]
 is holomorphic w.r.t.\ the complex structure $L_I$ defined by left multiplication by $I$.
\item[(ii)] For every $m\in\{1,\ldots, n\}$ and $K\in\Pm(m)$, the functions $\A^{m}_{K}(f)$ are locally constant on the set $(\s_{y_1}\times\cdots\times\s_{y_{m}}\times\{y_{m+1}\}\times\cdots\times\{y_n\})\cap U $.
\end{enumerate}
The function $f$ is strongly slice-regular if and only if property (i) holds and the functions $\A^{m}_K(f)$ in (ii) are constant on the set $(\s_{y_1}\times\cdots\times\s_{y_{m}}\times\{y_{m+1}\}\times\cdots\times\{y_n\})\cap U $.
\end{proposition}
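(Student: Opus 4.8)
The plan is to reduce the several-variable statement to the one-variable Definition~\ref{def:loc} applied to each of the one-variable slices $\A^{m,y}_K(f)$, and then to reconcile the two formulations of condition (ii). By Definition~\ref{def:loc_several}, the assertion $f\in\sr_{loc}(U)$ means precisely that $\A^{m,y}_K(f)\in\sr_{loc}(U_{m+1}(y))$ for every $y\in U$, every $m\in\{0,\ldots,n-1\}$ and every $K\in\Pm(m)$. Unwinding this through Definition~\ref{def:loc} for the single variable $x_{m+1}\in U_{m+1}(y)$, its condition (i) is verbatim condition (i) of the present statement, while its condition (ii) requires that $\dcf_{x_{m+1}}\A^{m,y}_K(f)$ and $\dcf_{x_{m+1}}(x_{m+1}\A^{m,y}_K(f))$ be locally constant on $\s_{x_{m+1}}\cap U_{m+1}(y)$. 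The first computation I would record is that, by the recursion \eqref{eq:amk} and since $m+1\notin K$ for $K\in\Pm(m)$, these two functions are the one-variable restrictions of $-\A^{m+1}_K(f)$ and $-\A^{m+1}_{K\cup\{m+1\}}(f)$. Hence one-variable condition (ii) is equivalent to the assertion that every $\A^{m'}_{K'}(f)$ (with $m'\in\{1,\ldots,n\}$, $K'\in\Pm(m')$) is locally constant along the \emph{single} sphere $\s_{x_{m'}}$, the other coordinates being frozen.

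This already settles the reverse implication. If (i) and (ii) hold, then (i) is one-variable condition (i), and the product local constancy in (ii), restricted to varying only the coordinate $x_{m+1}$, yields the single-sphere local constancy needed for one-variable condition (ii); thus $\A^{m,y}_K(f)\in\sr_{loc}(U_{m+1}(y))$ for all $y,m,K$, i.e. $f\in\sr_{loc}(U)$.

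The forward implication is the substantive part: from the single-sphere local constancies above I must recover the full product local constancy of (ii). I would argue by induction on $m$, proving that $\A^m_K(f)$ is locally constant on $(\s_{y_1}\times\cdots\times\s_{y_m}\times\{y_{m+1}\}\times\cdots\times\{y_n\})\cap U$. The base case $m=1$ is the single-sphere statement itself. For the inductive step the key auxiliary fact is that both multiplication by $x_m$ and the operator $\dcf_{x_m}$ preserve local constancy along $\s_{x_j}$ for $j<m$: where $\im(x_j)\neq0$, local constancy along $\s_{x_j}$ means exactly that the function depends on $x_j$ only through the pair $(\re(x_j),|\im(x_j)|)$, and since $\dcf_{x_m}$ and multiplication by $x_m$ act only on the coordinates of $x_m$, this dependence is unaffected (the real points $x_j\in\R$ being handled by continuity, the sphere there degenerating to a point). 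Applied to $\A^m_K(f)=-\dcf_{x_m}(x_m^{\mathbf{1}_K(m)}\A^{m-1}_{K\setminus\{m\}}(f))$ together with the inductive hypothesis, this shows $\A^m_K(f)$ is locally constant along each of $\s_{x_1},\ldots,\s_{x_{m-1}}$, while the single-sphere statement already provides local constancy along $\s_{x_m}$.

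It then remains to combine local constancy along each individual sphere $\s_{x_1},\ldots,\s_{x_m}$ into local constancy on their product intersected with $U$. Here I would use a short path argument: on a connected component of the product set $\cap\,U$, any two nearby points can be joined by a path changing one coordinate at a time and, by openness of $U$, staying inside $U$, so that the value is unchanged along each segment. This upgrade — passing from separate one-variable sphere constancy to joint product constancy — is the \emph{main obstacle}, and it is precisely where the lack of axial symmetry of $U$ must be treated with care. Finally, the strongly slice-regular case runs in parallel, replacing ``locally constant'' by ``constant'' throughout and invoking the strongly slice-regular clauses of Definitions~\ref{def:loc} and \ref{def:loc_several} in place of their local versions.
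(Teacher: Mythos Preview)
Your argument is correct, and you have identified a step that the paper glosses over. The paper's own proof is a single sentence: ``The statements are simply a rephrasing of Definition~\ref{def:loc_several} in view of Definition~\ref{def:loc}.'' In other words, the author treats the equivalence as a direct unpacking of the definitions, whereas you correctly observe that unwinding Definition~\ref{def:loc} for each $\A^{m,y}_K(f)$ only yields local constancy of $\A^{m+1}_{K'}(f)$ along the \emph{single} sphere $\s_{y_{m+1}}$ with the other coordinates frozen, while condition (ii) of the proposition asks for local constancy on the full product $\s_{y_1}\times\cdots\times\s_{y_m}$. Your induction on $m$, together with the observation that $\dcf_{x_m}$ and multiplication by $x_m$ commute with the tangential operators $L^j_{kl}$ for $j<m$ (a cleaner way to phrase your ``preservation of local constancy along $\s_{x_j}$'' than the dependence on $(\re(x_j),|\im(x_j)|)$, which is only a local description), correctly upgrades the single-sphere statement to the product one. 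The final connectedness/path argument is likewise sound. So the two proofs follow the same route in spirit; the paper regards the inductive passage from single-sphere to product-sphere constancy as routine and omits it, while you have written it out in full.
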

\begin{proof}
The statements are simply a rephrasing of Definition \ref{def:loc_several} in view of Definition \ref{def:loc}. 
\end{proof}

\begin{proposition}\label{pro:sr_is_locally_sr}
Let  $\OO\subseteq\hh^n$ be an axially symmetric domain and $f\in\mathcal C^n(\OO,\hh)$. Then $f$ is locally slice-regular in $\OO$ if and only if $f$ is strongly slice-regular in $\OO$.
\end{proposition}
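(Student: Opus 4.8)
The plan is to deduce the statement from the reformulation already available in Proposition~\ref{pro:loc_several}, which characterizes both notions by the \emph{same} analytic condition (i)---holomorphy of every restriction ${\A^{m,y}_K(f)}_I$ on the slices $U_{m+1}^I(y)$---together with two variants of condition (ii): local slice-regularity requires each $\A^m_K(f)$ to be \emph{locally constant} on the set
\[
S_m(y):=(\s_{y_1}\times\cdots\times\s_{y_{m}}\times\{y_{m+1}\}\times\cdots\times\{y_n\})\cap\OO,
\]
whereas strong slice-regularity requires it to be \emph{constant} on $S_m(y)$. Since property (i) is common to both, the whole proposition reduces to showing that, when $\OO$ is axially symmetric, ``locally constant on $S_m(y)$'' is equivalent to ``constant on $S_m(y)$'' for every $y\in\OO$, every $m\in\{1,\ldots,n\}$ and every $K\in\Pm(m)$.

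One implication is free: a constant function is locally constant, so strongly slice-regular always implies locally slice-regular (this is recorded directly in Definition~\ref{def:loc_several} and needs no symmetry hypothesis). The content is the converse, and here I would use axial symmetry in two ways. First, circularity of $\OO$ gives the set-theoretic identity $S_m(y)=\s_{y_1}\times\cdots\times\s_{y_{m}}\times\{y_{m+1}\}\times\cdots\times\{y_n\}$: if $(y_1,\ldots,y_n)\in\OO$ then replacing each of the first $m$ coordinates $y_k=\alpha_k+J_k\beta_k$ by $\alpha_k+J'_k\beta_k$ with arbitrary $J'_k\in\s$ keeps the point in $\OO$, so the full product already lies in $\OO$ and the intersection is vacuous. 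Second, each factor $\s_{y_k}$ is either a $2$-sphere (if $y_k\notin\R$) or a single point (if $y_k\in\R$), hence connected, and $\{y_{m+1}\},\ldots,\{y_n\}$ are points; thus $S_m(y)$ is a finite product of connected sets and is itself connected.

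On a connected topological space a locally constant function is constant, so the two versions of condition (ii) coincide on each $S_m(y)$; combined with the shared property (i) through Proposition~\ref{pro:loc_several}, this yields $f\in\sr_{loc}(\OO)$ if and only if $f$ is strongly slice-regular in $\OO$ (for the base case $n=1$ the statement is \cite[Proposition~17]{CR_S_operators}). I do not expect a genuine obstacle here: the only point requiring care is the verification that circularity forces $S_m(y)$ to be the full, connected product, and it is worth noting that only the circularity of $\OO$, not its connectedness as a domain, is actually used. Once that observation is in place the equivalence ``locally constant $=$ constant'' does all the remaining work, and no new regularity or approximation argument is needed.
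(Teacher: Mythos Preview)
Your proof is correct. The paper's own argument takes a slightly different route: instead of passing through Proposition~\ref{pro:loc_several} and arguing directly that $S_m(y)$ is connected, it works straight from Definition~\ref{def:loc_several}, observes that each one-variable section $\OO_{m+1}(y)$ is axially symmetric in $\hh$, and then invokes the one-variable result \cite[Proposition~17]{CR_S_operators} to conclude $\sr_{loc}(\OO_{m+1}(y))=\sr(\OO_{m+1}(y))$. Your approach unpacks the same idea one level further---the cited one-variable proposition ultimately rests on the same connectedness observation you make explicit---so your argument is a bit more self-contained, while the paper's is shorter by outsourcing that step.
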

\begin{proof}
Let $m\in\{0,\ldots,n-1\}$. As already observed, since $\OO$ is axially symmetric, the sets $\OO_{m+1}(y)$ are axially symmetric in $\hh$  for every $y\in\OO$. The statement follows from \cite[Proposition~17]{CR_S_operators}.
\end{proof}

Given any $x=(x_1,\ldots,x_n)\in\hh^n$, let $\s_x=\s_{x_1}\times\cdots\times\s_{x_n}$, where $\s_{x_h}=\alpha_h+\s\beta_h$ if $x_h=\alpha_h+J_h\beta_h$ for some $\alpha_h,\beta_h\in\R$ and $J_h\in\s$. Note that $\s_x$ is the smallest axially symmetric subset of $\hh^n$ containing $x$.

The condition of strong slice regularity guarantees that the function can be extended as a slice-regular function to an axially symmetric set. The \emph{symmetric completion} of a subset $ U $ of $\hh^n$ is the set 
\[\widetilde  U =\bigcup_{x\in  U }\s_x.\]
It is the smallest axially symmetric subset of $\hh^n$ containing $ U $.

\begin{theorem}[Global extendibility of strongly slice-regular functions]\label{teo:extension}
Let $ U \subseteq\hh^n$ be open and let $f\in\mathcal C^n( U,\hh )$. The function $f$ is strongly slice-regular in $ U $ if and only if it can be extended to a slice-regular function on the symmetric completion $\widetilde  U $. If this is the case, the extension is unique.
\end{theorem}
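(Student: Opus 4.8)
The plan is to prove both directions by reducing the statement to the one-variable extendibility result for strongly slice-regular functions established in \cite{CR_S_operators}, iterated through the inductive structure of Definition \ref{def:loc_several}. The key observation is that strong slice-regularity is defined exactly so that each of the one-variable functions $\A^{m,y}_K(f)$ is strongly slice-regular in $x_{m+1}$, and by the one-variable theorem each such function extends uniquely to a slice-regular function on the axially symmetric completion $\widetilde{U_{m+1}(y)}$ of its domain of definition. The idea is to assemble these one-dimensional extensions into a global extension on $\widetilde U$, and then to verify that the assembled function is genuinely slice-regular on $\widetilde U$ by invoking Proposition \ref{pro:1varChar2} (which characterizes slice regularity through the functions $\SP^m_K$ under only a continuity hypothesis).

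First I would prove the ``only if'' direction. Assuming $f$ is strongly slice-regular on $U$, I would proceed by induction on $m$ to extend $f$ over successively larger symmetric directions. For the base case, for each fixed $y$ the function $x_1\mapsto f(x_1,y_2,\ldots,y_n)=\A^{0,y}_\emptyset(f)$ lies in $\sr(U_1(y))$, so by Theorem \ref{teo:extension} in one variable it extends uniquely and slice-regularly to $\widetilde{U_1(y)}$. Property (ii) of Proposition \ref{pro:loc_several}, which guarantees that the $\A^m_K(f)$ are genuinely constant (not merely locally constant) on the relevant sphere products, ensures these fiberwise extensions are mutually compatible and glue to a well-defined function $\tilde f$ on $\widetilde U$. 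Because the $\A^m_K$ are built from the Cauchy-Riemann-Fueter operators $\dcf_{x_m}$ applied to products with $x_m$, and because the extension identity $\SP^m_K=\A^m_K$ holds for slice-regular functions (as recorded after formula \eqref{eq:amk}), the functions $\SP^m_K(\tilde f)$ are slice-regular in $x_{m+1}$ along each fiber. Applying Proposition \ref{pro:1varChar2} to the axially symmetric set $\widetilde U$ then yields that $\tilde f\in\sr(\widetilde U)$.

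The ``if'' direction is the easier one: if $f$ extends to some $\tilde f\in\sr(\widetilde U)$, then by Proposition \ref{pro:sr_is_locally_sr} the extension $\tilde f$ is strongly slice-regular on the axially symmetric set $\widetilde U$, and restricting property (i)--(ii) of Proposition \ref{pro:loc_several} back to $U\subseteq\widetilde U$ shows $f$ is strongly slice-regular on $U$. Uniqueness follows because two slice-regular extensions on the axially symmetric $\widetilde U$ would agree on the dense slices $U_I$ and hence everywhere by the identity principle for slice-regular functions, or directly because the constancy condition in (ii) pins down the value of each extended $\A^m_K$ on every sphere product from its value at the single point coming from $U$.

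I expect the main obstacle to be the gluing step in the ``only if'' direction, specifically verifying that the fiberwise one-variable extensions are mutually consistent across different base points $y$ and across the successive variables $x_1,\ldots,x_n$, so that they define a single global function of class $\mathcal C^n$ (or at least continuous, which suffices for Proposition \ref{pro:1varChar2}) on the full symmetric completion $\widetilde U$. The delicate point is that the extension in the $(m+1)$-st variable depends on the already-extended lower-order data $\A^{m}_K(f)$, so one must check that the order of extension does not matter and that the recursively defined $\A^m_K$ on $\widetilde U$ still satisfy the defining recursion \eqref{eq:amk}; the constancy (rather than mere local constancy) in the strong case is precisely what rules out monodromy obstructions and makes the global assembly unambiguous.
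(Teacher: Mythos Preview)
Your overall strategy matches the paper's: reduce to the one-variable extendibility theorem, use the constancy in Proposition~\ref{pro:loc_several}(ii) to propagate data over sphere products, and then invoke Proposition~\ref{pro:1varChar2} to certify slice-regularity of the extension. The ingredients are the right ones, but two steps in your outline are not yet rigorous and are handled differently in the paper.

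First, the construction of $\tilde f$. You propose a variable-by-variable fiberwise extension (``extend in $x_1$, then glue, then iterate''), but this only reaches the intermediate set $\widetilde U_1=\bigcup_y\s_{y_1}\times\{y_2\}\times\cdots\times\{y_n\}$ after the first step, and it is not clear from your sketch why the extended function on $\widetilde U_1$ again satisfies the hypotheses needed to continue. The paper avoids this by first deriving the Almansi-type decomposition $f=\sum_{K\in\Pm(n)}(-\overline x)_{K^c}\A^n_K(f)$ on $U$ itself, then extending each top-level component $\A^n_K(f)$ by constancy directly to $\widetilde U$ (since these are constant on full sphere products $\s_y$), and finally \emph{defining} $\tilde f$ by the same formula on $\widetilde U$. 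This gives a global formula for the extension in one stroke.

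Second, there is a circularity in your verification that $\SP^m_K(\tilde f)$ is slice-regular in $x_{m+1}$: you invoke the identity $\SP^m_K=\A^m_K$ for slice-regular functions, but $\tilde f$ is not yet known to be slice-regular. The paper instead proves $\widetilde\A^m_K(f)=\SP^m_K(\tilde f)$ directly from the recursive relation $\widetilde\A^{m-1}_K=\widetilde\A^m_{K\cup\{m\}}-\overline x_m\widetilde\A^m_K$ together with Remark~\ref{rem:sd}, and then uses a real-analyticity argument (each $\widetilde\A^m_K$ is real analytic in $x_{m+1}$ on $\widetilde U\setminus\R_\bullet$) to propagate the vanishing of $\difbar_{x_{m+1}}$ from $U$ to the connected components of $\widetilde U\setminus\R_\bullet$. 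You should expect to need this analyticity step; it does not follow automatically from constancy.

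A minor point: for the ``if'' direction you cite Proposition~\ref{pro:sr_is_locally_sr}, but that proposition compares locally strongly slice-regular with strongly slice-regular, not $\sr(\widetilde U)$ with strongly slice-regular. The paper uses Theorem~\ref{teo:Almansi_several} directly: since $\tilde f\in\sr(\widetilde U)$ one has $\A^m_K(\tilde f)=\SP^m_K(\tilde f)$, and the Almansi theorem gives the required properties of $\A^m_K(f)=\A^m_K(\tilde f)_{|U}$.
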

\begin{proof}
The one-dimensional case has been proved in the more general setting of hypercomplex subspaces in \cite[Theorem~15]{CR_S_operators}. In the notations of that result one can take $\hh$ as the hypercomplex subspace $M$, $m=3$, $c_m=-1$. Assume $n>1$ and let $f$ be strongly slice-regular in $U$. 
In particular, for every $y\in U $ and $I\in\s$ the restrictions of $\A^{0,y}_\emptyset(f)(x_1)=f(x_1,y_2,\ldots,y_n)$ and $x_1\A^{0,y}_\emptyset(f)$ for $x_1\in U _1^I(y)= U _1(y)\cap\cc_I$ are holomorphic. This means that $\difbar_{x_1}f=\difbar_{x_1}(x_1f)=0$ on $ U _1(y)$ for every $y\in U $, i.e., $\difbar_{x_1}f=\difbar_{x_1}(x_1f)=0$ on $ U $. Repeating the argument of the proof of \cite[Theorem~15]{CR_S_operators} and using the equalities $\A^{1}_{\emptyset}(f)=-\dcf_{x_1}f$ and $\A^{1}_{\{1\}}(f)=-\dcf_{x_1}(x_1f)$, we get the zonal decomposition of $f$ w.r.t.\ $x_1$, that is
\[
f=\A^{1}_{\{1\}}(f)-\overline x_1\A^{1}_\emptyset(f)
\]
on $ U $, with $\A^{1}_\emptyset(f)$ and $\A^{1}_{\{1\}}(f)$ of class $\mathcal C^{n-1}$ in $ U $. Moreover, as in \cite[Theorem~15]{CR_S_operators} we get that $\A^{1,y}_\emptyset(f)$ and $\A^{1,y}_{\{1\}}(f)$ are real analytic w.r.t.\ $x_1\in U _1(y)\setminus\R$. Therefore $f$ is real analytic w.r.t.\ $x_1$ in $ U \setminus\R_\bullet$. 

Since the functions $\A^{1,y}_K(f)$ belong to $\sr( U _2(y))$, we can repeat the same argument and obtain on $ U $
\[
f=\A^{1}_{\{1\}}(f)-\overline x_1\A^{1}_\emptyset(f)=
\left(\A^{2}_{\{1,2\}}(f)-\overline x_2\A^{2}_{\{1\}}(f)\right)-\overline x_1\left(\A^{2}_{\{2\}}(f)-\overline x_2\A^{2}_\emptyset(f)\right).
\]
The functions $\A^{2}_K(f)$ are of class $\mathcal C^{n-2}$ in $ U $, with restrictions  $\A^{2,y}_K(f)$ real analytic w.r.t.\ $x_2\in U _2(y)\setminus\R$ for all $y\in U $. Therefore the functions $A^1_K(f)=A^2_{K\cup\{2\}}(f)-\overline x_2 A^2_K(f)$ are real analytic w.r.t.\ $x_1$ and $x_2$ in $U\setminus\R_\bullet$ and $f$ is real analytic w.r.t.\ $x_1$ and $x_2$ in $ U \setminus\R_\bullet$. 
Proceeding inductively, we obtain, for every $m\in\{1,\ldots,n\}$, the decomposition
\begin{equation}\label{eq:fm}
f(x)=\sum_{K\in\Pm(m)}(-\overline x)_{K^c}\A^{m}_K(f)(x)\text{\quad for every }x\in U,
\end{equation}
with $A^m_K(f)\in\mathcal C^{n-m}(U,\hh)$ and restriction functions  $\A^{m,y}_K(f)$ that are real analytic w.r.t.\ $x_m\in U _m(y)\setminus\R$ for all $y\in U $. Moreover, for $m\ge1$ and $K'\in\Pm(m-1)$ the functions $A^{m-1}_{K'}(f)$ satisfy
\begin{equation}\label{eq:am1}
A^{m-1}_{K'}(f)=A^{m}_{K'\cup\{m\}}(f)-\overline x_m A^m_{K'}(f)
\end{equation}
in $U$ and are separately real analytic w.r.t.\ $x_{m-1}$ and $x_m$ in $U\setminus\R_\bullet$.  
As a final step, we arrive at the full decomposition
\[
f(x)=\sum_{K\in\Pm(n)}(-\overline x)_{K^c}\A^n_K(f)(x)\text{\quad for every }x\in U ,
\]
with $\A^{n}_K(f)\in\mathcal C^{0}( U,\hh )$. 
From \eqref{eq:fm} we infer that $f$ is separately real analytic w.r.t.\ all variables $x_1,\ldots,x_n$ in $ U \setminus\R_\bullet$. 

By condition (ii) of Proposition~\ref{pro:loc_several}, every function $\A^{m}_K(f)$, being constant on the set $(\s_{y_1}\times\cdots\times\s_{y_{m}}\times\{y_{m+1}\}\times\cdots\times\{y_n\})\cap U $, admits a continuous extension  $\widetilde\A^{m}_K(f)$ to the open set
\[
\widetilde U _{m}:=\bigcup_{y\in U }\s_{y_1}\times\cdots\times\s_{y_{m}}\times\{y_{m+1}\}\times\cdots\times\{y_n\}.
\]
It holds $ U \subseteq\widetilde U _1\subseteq\cdots\subseteq\widetilde U _n=\widetilde U $. Observe that these extensions are compatible with equation \eqref{eq:am1}, in the sense that it holds
\begin{equation}\label{eq:am1tilde}
\widetilde A^{m-1}_{K'}(f)=\widetilde A^{m}_{K'\cup\{m\}}(f)-\overline x_m \widetilde A^m_{K'}(f)
\end{equation}
on $\widetilde U_{m-1}$ for every $m\ge1$ and $K'\in\Pm(m-1)$. 

As in the one-variable case \cite[Theorem~15]{CR_S_operators}, for every $K\in\Pm(m)$  and every $y\in U $, the functions $\widetilde\A^{m,y}_K(f)$ are 
real analytic w.r.t.\ $x_{m}$ in $\widetilde U_m\setminus\R_\bullet$. 
In particular, when $m=n$, the functions $\{\A^n_K(f)\}_{K\in\Pm(n)}$ 
extend to functions $\widetilde \A^n_K(f)\in\mathcal C^0(\widetilde U,\hh )$, constant on every set $\s_y\subseteq\widetilde U $. 
The functions $\widetilde \A^n_K(f)$ are then circular slice functions in $\widetilde U $. 

Let $K^c=\{h_1,\ldots,h_p\}$, with $h_1<\cdots <h_p$. The pointwise product $(-\overline x)_{K^c}$ coincides with the slice product $(-\overline{x}_{h_1})\cdot(-\overline{x}_{h_2})\cdots (-\overline{x}_{h_p})$ (see \cite[Proposition~2.52]{SeveralA}). Moreover, since $\widetilde \A^n_K(f)$ is circular, the pointwise product $(-\overline x)_{K^c}\widetilde\A^n_K(f)(x)$ coincides with the slice product (see \cite[Lemma~2.51]{SeveralA})
\[
(-\overline{x}_{h_1})\cdot(-\overline{x}_{h_2})\cdot\ldots\cdot (-\overline{x}_{h_p})\cdot\widetilde\A^n_K(f)(x).
\]
Therefore the function defined on $\widetilde U $ by
\begin{equation}\label{eq:ftilde}
\widetilde f(x):=\sum_{K\in\Pm(n)}(-\overline x)_{K^c}\widetilde\A^n_K(f)(x)\text{\quad for }x\in\widetilde U 
\end{equation}
is a continuous slice function in $\widetilde U $, separately real analytic w.r.t.\ $x_1,\ldots,x_n$ in $\widetilde{ U }\setminus\R_\bullet$, and such that $\widetilde f=f$ on $ U $. 

By backward induction on $m\ge2$, we show that every function $\widetilde \A^m_K(f)$ extends as a continuous slice function in $\widetilde U$, real analytic w.r.t.\ $x_m$ and $x_{m+1}$ (if $m<n$) in $\widetilde U\setminus\R_\bullet$, such that 
\begin{equation}\label{eq:ftildem}
\widetilde f(x)=\sum_{K\in\Pm(m)}(-\overline x)_{K^c}\widetilde\A^m_K(f)(x)\text{\quad for every }x\in\widetilde U.
\end{equation}
The case $m=n$ is equation \eqref{eq:ftilde}. 
Thanks to \eqref{eq:am1tilde}, it holds  $\widetilde \A^{n-1}_K(f)=\widetilde \A^{n}_{K\cup\{n\}}(f)-\overline x_n\widetilde \A^{n}_K(f)$ on $\widetilde U_{n-1}$. Then we can set $\widetilde \A^{n-1}_K(f):=\widetilde \A^{n}_{K\cup\{n\}}(f)-\overline x_n\widetilde \A^{n}_K(f)$ on $\widetilde U$. Using \eqref{eq:am1tilde} for $m=n-1,\ldots,2$, and setting  
\begin{equation}\label{eq:amtilde}
\widetilde \A^{m-1}_K(f):=\widetilde \A^{m}_{K\cup\{m\}}(f)-\overline x_m\widetilde \A^{m}_K(f)
\end{equation}
 on $\widetilde U$, we get the required extensions, which we denote with the same symbols $\widetilde \A^m_K(f)$. 
 Note that for every $y\in\widetilde U$, the extended functions $\widetilde \A^m_K(f)$ are constant on the set $\s_{y_1}\times\cdots\times\s_{y_{m}}\times\{y_{m+1}\}\times\cdots\times\{y_n\} $. 
Assume that \eqref{eq:ftildem} holds for an integer $m$. We prove it for the value $m-1$. From \eqref{eq:amtilde}, we get
\begin{align*}
\widetilde f&=
\sum_{K\in\Pm(m)}(-\overline x)_{K^c}\widetilde\A^m_K(f)=\\
&=
\sum_{K\in\Pm(m),K\not\ni m}(-\overline x)_{K^c}\widetilde\A^m_K(f)+\sum_{K\in\Pm(m),K\not\ni m}(-\overline x)_{(K\cup\{m\})^c}\widetilde\A^m_{K\cup\{m\}}(f)=\\
&=
\sum_{H\in\Pm(m-1)}(-\overline x)_{H^c}\left(-\overline x_m\widetilde\A^m_H(f)+\widetilde\A^m_{H\cup\{m\}}(f)\right)=
\sum_{H\in\Pm(m-1)}(-\overline x)_{H^c}\widetilde \A^{m-1}_K(f).
\end{align*}
Now we prove that $\widetilde\A^{m}_K(f)=\SP^{m}_K(\widetilde f)$ in $\widetilde U\setminus\R_\bullet$ for every $m\in\{1,\ldots,n\}$ and every $K\in\Pm(m)$. 
From \eqref{eq:ftildem} with $m=1$, we have $\widetilde f=\widetilde\A^{1}_{\{1\}}(f)-\overline x_1 \widetilde\A^{1}_\emptyset(f)$ on $\widetilde U$. From this equation and Remark \ref{rem:sd} it follows that 
\[
\widetilde\A^{1}_\emptyset(f)=(\widetilde f)'_{s,x_1}=\widetilde\SP^{1}_\emptyset(\widetilde f)\text{\quad and\quad}
\widetilde\A^{1}_{\{1\}}(f)=(x_1\widetilde f)'_{s,x_1}=\widetilde\SP^{1}_{\{1\}}(\widetilde f).
\]
Let $m>1$ and assume that $\widetilde\A^{m-1}_K(f)=\SP^{m-1}_K(\widetilde f)$ for every $K\in\Pm(m-1)$. From \eqref{eq:amtilde} and Remark \ref{rem:sd} we get 
\begin{align*}
\widetilde \A^{m}_K(f)&=(\widetilde\A^{m-1}_K(f))'_{s,x_m}=(\SP^{m-1}_K(\widetilde f))'_{s,x_m}=\SP^m_K(\widetilde f),\\
\widetilde \A^{m}_{K\cup\{m\}}(f)&=(x_m\widetilde\A^{m-1}_K(f))'_{s,x_m}=(x_m\SP^{m-1}_K(\widetilde f))'_{s,x_m}=\SP^m_{K\cup\{m\}}(\widetilde f)
\end{align*}
in $\widetilde U\setminus\R_\bullet$. 

It remains to show that $\widetilde f\in\sr(\widetilde U )$. 
We argue again as in the proof of \cite[Theorem~15]{CR_S_operators}. Since $f$ is strongly slice-regular, $\A^{m,y}_K(f)$ belongs to $\sr( U _{m+1}(y))$ for every $y\in U $ and $m\in\{0,\ldots,n-1\}$. 
Therefore $\difbar_{x_{m+1}}\widetilde\A^{m,y}_K(f)=\difbar_{x_{m+1}} \A^{m,y}_K(f)=0$ on $ U _{m+1}(y)\setminus\R$. 
From the real analyticity of $\difbar_{x_{m+1}}\widetilde\A^{m}_K(f)$ w.r.t.\ $x_{m+1}$ in $\widetilde U\setminus\R_\bullet$ and the fact that every connected component of $\widetilde  U \setminus\R_\bullet$ intersects at least one connected component of $ U \setminus\R_\bullet$, it follows that $\difbar_{x_{m+1}}\widetilde\A^{m}_K(f)=\difbar_{x_{m+1}}\SP^{m}_K(\widetilde f)=0$ on $\widetilde  U \setminus\R_\bullet$. Therefore $\SP^{m,y}_K(\widetilde f)\in\sr(\widetilde U_{m+1}(y)\setminus\R)$ for every $y\in\widetilde U\setminus\R_\bullet$. 
Proposition~\ref{pro:1varChar2} permits to conclude that $\widetilde f$ is slice-regular in $\widetilde U$.

Conversely, if $f\in\mathcal C^n(U,\hh)$ has an extension $\widetilde f\in\sr(\widetilde U)$, then Theorem \ref{teo:Almansi_several} implies that  the Almansi components $\A^m_K(\widetilde f)=\SP^m_K(\widetilde f)$ of $\widetilde f$ are slice-regular w.r.t.\ $x_{m+1}$ for every $m\in\{0,\ldots, n-1\}$. Since $\A^m_K(f)=\A^m_K(\widetilde f)_{|U}$ on $U$, $f$ is strongly slice-regular in $U$. 

The uniqueness of the extension $\widetilde f$ follows from Proposition \ref{pro:1varChar} and the identity principle for slice-regular functions of one variable (see \cite[Theorem~4.11]{AlgebraSliceFunctions}) applied to the functions $\widetilde f$ and to the Almansi components of $\widetilde f$.  
\end{proof}

If the domain is axially symmetric, the condition of strong slice-regularity is equivalent to the slice-regularity defined via the stem function approach (see \cite[Proposition~17]{CR_S_operators} for the one-variable case).

\begin{proposition}\label{pro:axially}
Let $\OO\subseteq\hh^n$ be an axially symmetric domain and $f\in\mathcal C^n(\OO,\hh)$. Then $f\in\sr(\OO)$ if and only if $f$ is strongly slice-regular in $\OO$.
Moreover, a function $f$ is strongly slice-regular in $\OO$ if and only if it is locally strongly slice-regular in $\OO$.  
\end{proposition}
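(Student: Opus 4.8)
The plan is to obtain both equivalences as immediate consequences of results already established, reducing the argument to short bookkeeping. The crucial elementary observation is that, when $\OO$ is axially symmetric, its symmetric completion satisfies $\widetilde\OO=\OO$: by definition $\widetilde\OO$ is the smallest axially symmetric set containing $\OO$, and $\OO$ already has this property.

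For the first equivalence, $f\in\sr(\OO)$ if and only if $f$ is strongly slice-regular, I would apply Theorem \ref{teo:extension} with $U=\OO$. That theorem asserts that $f$ is strongly slice-regular in $\OO$ exactly when it extends to a slice-regular function on $\widetilde\OO$, the extension being unique. Since $\widetilde\OO=\OO$, any such extension is defined on the very set $\OO$ on which $f$ is given and restricts to $f$ there, hence coincides with $f$; consequently the extendibility criterion collapses to the plain condition $f\in\sr(\OO)$. This yields both directions at once: if $f$ is strongly slice-regular, its unique extension is $f$ itself and lies in $\sr(\OO)$; conversely, if $f\in\sr(\OO)$, then $f$ is trivially its own slice-regular extension to $\widetilde\OO$, so Theorem \ref{teo:extension} returns strong slice-regularity. (Alternatively, the direction $f\in\sr(\OO)\Rightarrow$ strongly slice-regular can be read off directly from $\SP^m_K(f)=\A^m_K(f)$ and the regularity w.r.t.\ $x_{m+1}$ of the $\SP^m_K(f)$ granted by Theorem \ref{teo:Almansi_several}, but routing both implications through Theorem \ref{teo:extension} is cleaner.)

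For the second equivalence, that strong slice-regularity and local strong slice-regularity coincide on an axially symmetric domain, I would simply invoke Proposition \ref{pro:sr_is_locally_sr}, which states exactly this.

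I do not anticipate a genuine obstacle, as all the substantive work has been carried out in Theorem \ref{teo:extension} (whose converse direction rests, in turn, on the several-variable Almansi decomposition, Theorem \ref{teo:Almansi_several}) and in Proposition \ref{pro:sr_is_locally_sr}. The single point requiring a moment of care is precisely the identification $\widetilde\OO=\OO$, together with the remark that extending to $\widetilde\OO$ a function already defined on all of $\OO=\widetilde\OO$ changes nothing; it is this remark that turns the extendibility statement of Theorem \ref{teo:extension} into the sought coincidence of the two notions of regularity.
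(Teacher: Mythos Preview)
Your proposal is correct and matches the paper's approach closely. For the first equivalence the paper does exactly what you suggest: one direction via Theorem~\ref{teo:extension} together with $\widetilde\OO=\OO$, the other via $\A^m_K(f)=\SP^m_K(f)$ and Theorem~\ref{teo:Almansi_several} (your ``alternative'' route); routing both directions through the biconditional of Theorem~\ref{teo:extension}, as you do, is equally valid. For the second equivalence you invoke Proposition~\ref{pro:sr_is_locally_sr}, which is legitimate and in fact cleaner; the paper instead repeats the underlying one-line argument (axial symmetry forces the relevant products of spheres to lie entirely in $\OO$, hence to be connected, so ``locally constant'' upgrades to ``constant'').
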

\begin{proof}
If $f\in\sr(\OO)$, $\A^m_K(f)=\SP^m_K(f)$ for every $m$ and $K$ and then Theorem~\ref{teo:Almansi_several} implies that $f$ is strongly slice-regular. Conversely, if $f$ is strongly slice-regular on $\OO$, Theorem \ref{teo:extension} gives $\tilde f\in\sr(\widetilde\OO)$, with $\widetilde f=f$ on $\OO=\widetilde\OO$. Therefore $f\in\sr(\OO)$.
Since $\OO$ is axially symmetric, the sets $\s_y\cap\OO$ are connected for every $y\in\OO$. This implies the last statement. 
\end{proof}

If $U\subseteq\hh^n$ is any open set, we will denote by $\sr(U)$ the set of strongly slice-regular functions in $U$, and by  $\sr_{loc}(U)$ the set of locally strongly slice-regular functions in $U$. Thanks to Proposition \ref{pro:axially}, this notation is consistent with the usual one if $U$ is axially symmetric. 
If $U\subseteq\OO$ is open, with $\OO$ an axially symmetric domain, then the restriction on $U$ of any $f\in\sr(\OO)$ is strongly slice-regular in $U$. 

\begin{definition}\label{def:product}
Let $U\subseteq\hh^n$ be open. Given two strongly slice-regular functions $f,g\in\sr(U)$, their \emph{slice product} is the function $f\cdot g\in\sr(U)$ defined as
\[
f\cdot g:=\big(\widetilde f\cdot\widetilde g\big)_{|U},
\]
where $\widetilde f$ and $\widetilde g$ are the slice-regular extensions on $\widetilde U$ given by Theorem \ref{teo:extension}.  
\end{definition}

The previous definition, together with pointwise sum and multiplication with real numbers, provides $\sr(U)$ with a structure of real algebra. Theorem \ref{teo:extension} says that the mapping $\E$ sending $f\in\sr(U)$ to $\E(f)=\widetilde f\in\sr(\widetilde U)$ is an isomorphism of real algebras. It is also an isomorphism of right $\hh$-modules.

When $n=1$, if $f$ is \emph{slice-preserving}, i.e., $f(U\cap\cc_I)\subseteq\cc_I$ for every $I\in\s$, then 
\[
(f\cdot g)(x)=(\widetilde f\cdot\widetilde g)(x)=\widetilde f(x)\widetilde g(f(x)^{-1}xf(x))
=\widetilde f(x)\widetilde g(x)=f(x)g(x)
\]
for every $x\in U$ such that $f(x)\ne0$. Therefore it holds $(f\cdot g)(x)=f(x)g(x)$ for all $x\in U$. This property is no more valid when $n>1$. For example $x_2\cdot x_1=x_1x_2\ne x_2x_1$ (see \cite[\S2.6]{SeveralA}).

\begin{remark}\label{rem:localnotstrong}
If $ U $ is not axially symmetric, it can happen that a locally strongly slice-regular function does not extend slice-regularly to $\widetilde  U $ and therefore $\sr( U )\ne\sr_{loc}( U )$. See \cite{DouRenI,DouRenSabadini} and \cite[Example~3.4]{GS2020geometric} for a one-variable example. An example in $n$ variables can be obtained considering the product $g(x):=f(x_1)\cdot x_n$, where $f$ is a one-variable function, locally strongly slice-regular but not strongly slice-regular in $x_1$. 

If the intersections $\s_y\cap  U $ are always connected, then $\sr_{loc}( U )=\sr( U )$. For example, this is true for every open ball in $\hh^n$.
\end{remark}

\begin{theorem}[Local extendibility of locally strongly slice-regular functions]\label{teo:localext}
Let $ U \subseteq\hh^n$ be open and let $f\in\mathcal C^n( U,\hh )$. Then $f\in\sr_{loc}( U )$ if and only if for each $y\in  U $ there exists an open neighbourhood $ U _y\subseteq U$ of $y$ and a slice-regular function $\widetilde f^{(y)}\in\sr(\widetilde{ U _y})$ that extends the restriction $f_{| U _y}$.
\end{theorem}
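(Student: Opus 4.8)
The plan is to reduce both implications to the global extendibility result, Theorem~\ref{teo:extension}, using the pointwise characterization of (local) slice-regularity in Proposition~\ref{pro:loc_several}. The backward implication is the easier one. Suppose that every $y\in U$ has a neighbourhood $U_y\subseteq U$ and an extension $\widetilde f^{(y)}\in\sr(\widetilde{U_y})$ of $f_{|U_y}$. By the converse part of Theorem~\ref{teo:extension}, $f_{|U_y}$ is strongly slice-regular in $U_y$, hence in particular locally strongly slice-regular there. I would then observe that both conditions (i) and (ii) in Proposition~\ref{pro:loc_several} are of local nature: (i) is the holomorphicity of the slice restrictions of the $\A^{m,y}_K(f)$, which is tested near each point, and (ii) is local constancy of the $\A^m_K(f)$ on the product-sphere sets. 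Since the Cauchy--Riemann--Fueter operators defining the $\A^m_K$ are local, $\A^m_K(f)_{|U_y}=\A^m_K(f_{|U_y})$; thus the validity of (i) and (ii) on each member of the open cover $\{U_y\}$ propagates to all of $U$, giving $f\in\sr_{loc}(U)$.

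For the forward implication, fix $y\in U$ and choose $U_y=B(y,r)$, an open ball contained in $U$. The goal is to show that $f_{|U_y}$ is not merely locally but \emph{strongly} slice-regular, for then Theorem~\ref{teo:extension} produces the required extension $\widetilde f^{(y)}\in\sr(\widetilde{U_y})$. Condition (i) of Proposition~\ref{pro:loc_several} is inherited by restriction to the open subset $U_y$, so by that proposition it suffices to upgrade condition (ii) from ``locally constant'' to ``constant'': since $f\in\sr_{loc}(U)$, each $\A^m_K(f)$ is locally constant on $(\s_{x_1}\times\cdots\times\s_{x_m}\times\{x_{m+1}\}\times\cdots\times\{x_n\})\cap U$, and I must show it is genuinely constant on the smaller set obtained by intersecting with $U_y$. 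Because a locally constant function on a connected set is constant, this reduces to the geometric claim that, for every $x\in U_y$ and every $m\in\{1,\ldots,n\}$, the intersection
\[
\left(\s_{x_1}\times\cdots\times\s_{x_m}\times\{x_{m+1}\}\times\cdots\times\{x_n\}\right)\cap B(y,r)
\]
is connected.

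To establish connectedness I would write $x_h=\alpha_h+J_h\beta_h$ and parametrise the set by $(K_1,\ldots,K_m)\in\s^m$, the generic point being $p=(\alpha_1+K_1\beta_1,\ldots,\alpha_m+K_m\beta_m,x_{m+1},\ldots,x_n)$. A direct computation using $|K_h|=1$ gives
\[
|p-y|^2=C-2\sum_{h=1}^m\beta_h\,\langle K_h,\IM(y_h)\rangle,
\]
where $C$ does not depend on the $K_h$ and $\langle\,\cdot\,,\cdot\,\rangle$ is the Euclidean product on the imaginary quaternions $\IM(\hh)\simeq\R^3$. Hence the intersection with $B(y,r)$ is the superlevel set $\{\sum_h\beta_h\langle K_h,\IM(y_h)\rangle>c\}$ of a function on $\s^m$ that splits as a sum of one-variable terms (the factors with $\beta_h=0$ reduce to points, and those with $\IM(y_h)=0$ enter as free spheres that do not affect connectedness). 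I would then argue that such a superlevel set is path-connected: rotating each relevant $K_h$ along the geodesic toward the maximiser $\IM(y_h)/|\IM(y_h)|$ increases every summand monotonically, so any point of the set can be joined within the set to the common maximiser, where the functional attains its maximum $\sum_h\beta_h|\IM(y_h)|>c$ (the measure-zero locus $K_h=-\IM(y_h)/|\IM(y_h)|$ is avoided by a preliminary small perturbation that only increases the functional). Granting this, condition (ii) holds on $U_y$, so $f_{|U_y}$ is strongly slice-regular and Theorem~\ref{teo:extension} applies.

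The main obstacle is precisely this geometric connectedness statement. In one variable $\s_x\cap B(y,r)$ is a single spherical cap and connectedness is immediate, but in several variables one intersects a \emph{product} of spheres with the ball, and one must show that the superlevel set of a separable linear functional on $\s^m$ is connected. Everything else is a bookkeeping reduction to Theorem~\ref{teo:extension} and to the locality of the conditions in Proposition~\ref{pro:loc_several}.
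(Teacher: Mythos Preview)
Your argument is correct and follows the same route as the paper, just with considerably more detail. The paper's proof is a two-line sketch: for the forward direction it asserts that one can find a neighbourhood $U_y$ on which $f$ is strongly slice-regular and then invokes Theorem~\ref{teo:extension}, while the converse is declared ``immediate''. The choice $U_y=B(y,r)$ and the passage from locally constant to constant via connectedness of $(\s_{x_1}\times\cdots\times\s_{x_m}\times\{x_{m+1}\}\times\cdots\times\{x_n\})\cap B(y,r)$ is exactly the content the paper suppresses; it is only hinted at in Remark~\ref{rem:localnotstrong}, where the connectedness of $\s_y\cap U$ for balls is stated without proof. Your superlevel-set computation and the geodesic-to-maximiser path argument supply precisely this missing piece, and your treatment of the degenerate factors ($\beta_h=0$ or $\IM(y_h)=0$) and of the antipodal obstruction is adequate.
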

\begin{proof}
Let $f\in\sr_{loc}(U)$. 
If $y\in U$, we can find an open neighbourhood $U_y\subseteq U$ of $y$ such that $f_{| U_y}$ is strongly slice-regular in $U_y$ 
and apply Theorem \ref{teo:extension}.  
The converse is immediate. 
\end{proof}

\begin{definition}\label{def:product_loc}
Let $U\subseteq\hh^n$ be open. Given $f,g\in\sr_{loc}(U)$, their \emph{slice product} is the function $f\cdot g\in\sr_{loc}(U)$ defined locally as
\begin{equation}\label{eq:local_product}
(f\cdot g)_{|U_y}:=\big(\widetilde f^{(y)}\cdot\widetilde g^{(y)}\big)_{|U_{y}},
\end{equation}
where $\widetilde f^{(y)}$ and $\widetilde g^{(y)}$ are local slice-regular extensions on $\widetilde U_y$ given by Theorem \ref{teo:localext}.   
\end{definition}

The product is well-defined: if two local extensions $\widetilde f^{(y)}$ and $\widetilde f^{(z)}$ of $f$ coincide on an open subset $W\subseteq U$, then $\widetilde f^{(y)}=\widetilde f^{(z)}$ on $\widetilde W$. 
Since the product $\widetilde f^{(y)}\cdot\widetilde g^{(y)}$ belongs to $\sr(\widetilde U_y)$, its restriction on $U_y$ is strongly slice-regular. Therefore formula \eqref{eq:local_product} defines a locally strongly slice-regular function on $U$. This slice product, together with pointwise sum and multiplication by reals, gives to $\sr_{loc}(U)$ a structure of real algebra. 

Thanks to Theorem \ref{teo:localext}, all \emph{local} properties satisfied by slice-regular functions are still valid for locally strongly slice-regular functions. See \cite[\S4.1]{CR_S_operators} for some results of quaternionic local slice analysis in one variable. As a several variables example, we state the following local quaternionic Almansi Theorem, which generalizes Theorem \ref{teo:Almansi_several}.  

\begin{theorem}[Local Almansi Theorem in several quaternionic variables]\label{teo:local_Almansi_several}
Let $f\in\sr_{loc}(U)$, with $U$ open set in $\hh^n$. Let $\A^m_K(f)\in\mathcal C^{n-m}(U,\hh)$ be defined as in \eqref{eq:amk} for every $m\in\{0,\ldots,n\}$ and $K\in\Pm(m)$. Then
\[
f(x)=\sum_{K\in\Pm(m)}(-\overline{x})_{K^c}\A^m_K(f)(x)
\]
for every $x\in U$. If $m=n$, then every function $\A^n_K(f)$ is separately  harmonic in the variables $x_1,\ldots, x_n$. If $m<n$, then the functions $\A^m_K(f)$ are separately harmonic in  $x_1,\ldots, x_m$ and locally strongly slice-regular w.r.t.\ $x_{m+1}$ in $U_{m+1}(y)$.
The functions $\A^m_K(f)$ are locally constant on the set $\left(\s_{y_1}\times\cdots\times\s_{y_{m}}\times\{y_{m+1}\}\times\cdots\times\{y_n\}\right)\cap U$ for every $y\in U$.
\end{theorem}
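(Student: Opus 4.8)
The plan is to reduce the statement to the global slice-regular Almansi Theorem \ref{teo:Almansi_several} by means of the local extendibility Theorem \ref{teo:localext}, using crucially that the operators $\A^m_K$ in \eqref{eq:amk} are built from the Cauchy-Riemann-Fueter operators $\dcf_{x_m}$ and are therefore genuine (local) differential operators.

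First I would fix $y\in U$ and apply Theorem \ref{teo:localext} to obtain an open neighbourhood $U_y\subseteq U$ of $y$ and a slice-regular extension $\widetilde f^{(y)}\in\sr(\widetilde{U_y})$ of $f_{|U_y}$ on the axially symmetric set $\widetilde{U_y}$. Since each $\A^m_K$ is obtained in \eqref{eq:amk} by iterating $\dcf_{x_m}$ together with multiplication by $x_m$, its value at a point depends only on the germ of its argument there; as $f=\widetilde f^{(y)}$ on the open set $U_y$, this gives $\A^m_K(f)=\A^m_K(\widetilde f^{(y)})$ on $U_y$ for all $m,K$. Because $\widetilde f^{(y)}$ is slice-regular on the axially symmetric $\widetilde{U_y}$, the characterization noted after \eqref{eq:amk} yields $\A^m_K(\widetilde f^{(y)})=\SP^m_K(\widetilde f^{(y)})$, so on $U_y$ the functions $\A^m_K(f)$ coincide with the iterated spherical derivatives of the local extension.

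The conclusions now transfer through this identification. Theorem \ref{teo:Almansi_several} applied to $\widetilde f^{(y)}$ gives, for each fixed $m$,
\[
\widetilde f^{(y)}(x)=\sum_{K\in\Pm(m)}(-\overline x)_{K^c}\SP^m_K(\widetilde f^{(y)})(x)\quad\text{on }\widetilde{U_y};
\]
restricting to $U_y$ and substituting $\widetilde f^{(y)}=f$ and $\SP^m_K(\widetilde f^{(y)})=\A^m_K(f)$ yields the asserted decomposition on $U_y$. Since the $U_y$ cover $U$ and the $\A^m_K(f)$ are globally defined differential expressions on $U$, the pointwise identity holds throughout $U$. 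The separate harmonicity in $x_1,\ldots,x_m$ (i.e.\ in all of $x_1,\ldots,x_n$ when $m=n$) and the local constancy on $(\s_{y_1}\times\cdots\times\s_{y_m}\times\{y_{m+1}\}\times\cdots\times\{y_n\})\cap U$ are local properties, so they descend from the corresponding zonal-harmonicity and sphere-constancy statements of Theorem \ref{teo:Almansi_several} and Proposition \ref{pro:SP_slice} for $\widetilde f^{(y)}$; the global ``zonal with pole $1$'' feature is necessarily weakened to plain harmonicity because $U$ need not be axially symmetric. The local slice-regularity of $\A^m_K(f)$ w.r.t.\ $x_{m+1}$ in $U_{m+1}(y)$, for $m<n$, is then immediate: it is precisely the condition $\A^{m,y}_K(f)\in\sr_{loc}(U_{m+1}(y))$ that defines $f\in\sr_{loc}(U)$ in Definition \ref{def:loc_several}.

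The only point requiring genuine care is the compatibility of the purely local, differential definition of the $\A^m_K$ with the a priori only locally defined extensions $\widetilde f^{(y)}$: one must verify that $\A^m_K(f)=\A^m_K(\widetilde f^{(y)})$ on $U_y$ and that this is independent of the chosen extension and consistent on overlaps $U_y\cap U_z$. This is exactly the locality of the operators $\dcf_{x_m}$, together with the fact that $\A^m_K(f)$ is itself defined globally on $U$ without reference to any extension. Once this is recorded, no analytic input beyond Theorem \ref{teo:localext} is needed, and the result is a direct transfer of Theorem \ref{teo:Almansi_several}.
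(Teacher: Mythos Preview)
Your proposal is correct and follows essentially the same approach as the paper: invoke Theorem~\ref{teo:localext} to get a local slice-regular extension $\widetilde f^{(y)}$, apply Theorem~\ref{teo:Almansi_several} to it, and use $\A^m_K(f)=\A^m_K(\widetilde f^{(y)})=\SP^m_K(\widetilde f^{(y)})$ on $U_y$ (by locality of $\dcf_{x_m}$ and slice-regularity of the extension) to transfer the decomposition, harmonicity and zonality back to $f$. Your discussion of overlap-consistency and of why zonality degrades to local constancy is slightly more explicit than the paper's, but the argument is the same.
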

\begin{proof}
By the local extension result (Theorem \ref{teo:localext}) and the Almansi Theorem (Theorem~\ref{teo:Almansi_several}), we get that, for each $y\in  U $, it holds 
\[
f(x)=\widetilde f^{(y)}(x)=\sum_{K\in\Pm(m)}(-\overline{x})_{K^c}\SP^m_K(\widetilde f^{(y)})(x)
\]
in a neighbourhood $ U _y$ of $y$, where $\widetilde f^{(y)}\in\sr(\widetilde U_y)$ is the extension of $f_{|U_y}$. By slice-regularity and definition \eqref{eq:amk}, it holds
$\SP^m_K(\widetilde f^{(y)})=\A^m_K(\widetilde f^{(y)})=\A^m_K(f)$ on $U_y$. 
Since the functions $\SP^m_K(\widetilde f^{(y)})$ are zonal in the variables $x_1,\ldots, x_m$,  the functions $\A^m_K(f)$ are locally constant on the set $\left(\s_{y_1}\times\cdots\times\s_{y_{m}}\times\{y_{m+1}\}\times\cdots\times\{y_n\}\right)\cap U$ for every $y\in U$.
\end{proof}

\begin{remark}
For one quaternionic variable, it has been proved in \cite{GS2020local} that if $ U\subseteq\hh$ is a \emph{slice domain} (i.e., $ U\cap\R\ne\emptyset$ and every slice $ U\cap\cc_I$ is a domain in $\cc_I$), then every function $f\in\mathcal C^1( U)$ satisfying condition (i) in Definition \ref{def:loc} 
has the local slice-regular extension property, i.e., it is locally strongly slice-regular. 
In the same paper \cite{GS2020local} it has been identified a class of domains, called \emph{simple domains}, which includes convex slice domains, such that $f$ is also strongly slice-regular. 
\end{remark}

\section{Wirtinger operators}\label{sec:Wirtinger operators}

We recall the behavior of Wirtinger derivatives for functions of several complex variables. It holds 
\begin{equation}\label{eq:complex_Wirtinger}
\dd{z_j}{z_m}=\delta_{jm},\quad
\dd{\overline z_j}{z_m}=0,\quad
\dd{z_j}{\overline z_m}=0,\quad
\dd{\overline z_j}{\overline z_m}=\delta_{jm},
\end{equation}
and Leibniz rules are satisfied.
In one quaternionic variable $x$, the global differential operators $\dif$, $\difbar$ coincide with the slice derivatives $\dd{}{x}$, $\dd{\ }{x^c}$ on slice functions, and then they satisfy  formulas analogous to \eqref{eq:complex_Wirtinger} when applied to powers $x^k$ and $\overline x^k$. The same holds for several quaternionic variables for the operators $\dif_{x_1}$ and $\difbar_{x_1}$. On the contrary, the operators $\dif_{x_2},\ldots,\dif_{x_n}$ and $\difbar_{x_2},\ldots,\difbar_{x_n}$ behave differently. 

\begin{example}\label{ex:x1x2}
The operator $\dif_{x_2}$ is not a candidate for a Wirtinger operator for slice functions in two or more quaternionic variables. For example, consider the function $x_1\cdot x_2=x_1x_2\in\sr(\hh^2)$. Then $\dif_{x_1}(x_1x_2)=x_2$, but the function
\[
\textstyle\dif_{x_2}(x_1x_2)=x_{1_0}+\frac{\IM(x_2)}{|\IM(x_2)|^2}\sum_{i=1}^3x_{1_i}x_{2_i}
\]
is not even a slice function on $\hh^2\setminus\R_\bullet$, since on $\cc_i\times\cc_i$ it holds $\dif_{x_2}(x_1x_2)=x_{1_0}+\IM(x_1)=x_1$, and therefore if it were a slice function it should coincide with $x_1$ on $\hh^2\setminus\R_\bullet$. Similar properties hold in $\hh^n$ 
for any operator $\dif_{x_m}$ and $\difbar_{x_m}$ with $m>1$. 
\end{example}

For any $i,j$ with $1\le i,j\le 3$ and $m\in\{1,\ldots,n\}$, consider the
tangential differential operators 
\[
L^m_{ij}=x_{mi}\dd{\ }{x_{mj}}-x_{mj}\dd{\ }{x_{mi}}
\]
to the spheres $\s_x$. Let
\[\Gamma_{x_m}=-\tfrac12\sum_{i,j=1}^3 e_ie_jL^m_{ij}=-iL^m_{23}+jL^m_{13}-kL^m_{12}
\] 
be the quaternionic \emph{spherical Dirac operator} w.r.t.\ the variable $x_m$ (see e.g.\ 
\cite[\S8.7]{BDS} or \cite{SprossigZAA}). Here $e_1=i$, $e_2=j$, $e_3=k$. 

\begin{remark}\label{rem:Gamma}
The operator $\Gamma_{x_m}$ coincides with the operator $\Gamma_{\mathcal B}$ of \cite{CR_S_operators} when one takes $\hh$ as the hypercomplex subspace $M$, $m=3$, $c_m=-1$ and  $\mathcal B=(1,i,j,k)$.
\end{remark}

Let $f\in\mathcal C^n(  U ,\hh)$, with $  U \subseteq\hh^n$ open. Define recursively 
functions $\Gamma^m_K(f)$, with $m\in\{0,\ldots, n\}$ and $K\in\Pm(m)$, as follows: 
\begin{equation}\label{eq:Gamma_mk}
\begin{cases}
\Gamma^0_\emptyset(f):=f&\text{\quad for }m=0,\\
\Gamma^m_K(f):=(2\IM(x_m))^{-1}\Gamma_{x_m}\left(x_m^{\mathbf{1}_K(m)}\Gamma^{m-1}_{K\setminus\{m\}}(f)\right) &\text{\quad for }m\in\{1,\ldots,n\}.
\end{cases}
\end{equation}
For $m>0$, the functions $\Gamma^m_K(f)$ are of class $\mathcal C^{n-m}(U\setminus\R_m,\hh)$.

\begin{definition}[Quaternionic Wirtinger operators]\label{def:Wirtinger}
Let $U\subseteq\hh^n$ be an open set. For every $m\in\{1,\ldots,n\}$, let $\thet m,\thb m:\mathcal C^n(U\setminus\R_\bullet,\hh)\to\mathcal C^{n-m}(U\setminus\R_\bullet,\hh)$ be the $\R$-linear differential operators of order $m$ defined on functions $f\in\mathcal C^n(U\setminus\R_\bullet,\hh)$ by 
\[
\theta_m(f):=\sum_{K\in\Pm(m-1)}(-\overline{x})_{K^c}\dif_{x_m}\Gamma^{m-1}_K(f)\quad\text{and}\quad
\thb m(f):=\sum_{K\in\Pm(m-1)}(-\overline{x})_{K^c}\difbar_{x_m}\Gamma^{m-1}_K(f),
\]
where $\dif_{x_m}$ and $\difbar_{x_m}$ are the operators defined in \eqref{def:thb}.  
\end{definition}

Observe, in particular, that the first operators $\thet 1$ and $\thb 1$ coincide, respectively, with $\dif_{x_1}$ and $\difbar_{x_1}$, while the second operators have the form
\begin{align*}
\thet 2(f)&=\dif_{x_2}\big((2\IM(x_1))^{-1}\Gamma_{x_1}(x_1 f)\big)-\overline x_1\dif_{x_2}\big((2\IM(x_1))^{-1}\Gamma_{x_1}f\big),\\
\overline\theta_2(f)&=\difbar_{x_2}\big((2\IM(x_1))^{-1}\Gamma_{x_1}(x_1 f)\big)-\overline x_1\difbar_{x_2}\big((2\IM(x_1))^{-1}\Gamma_{x_1}f\big).
\end{align*}
Note also that the operators $\thet m$ and $\thb m$ are of total order $m$, but of the first order w.r.t.\ any variable $x_1,\ldots,x_m$.

Let  $\OO'\subseteq\hh$. We recall that on a slice function $g\in\SF^1(\OO')$ in one variable $x_m$, the operators $\dif_{x_m}$ and $\difbar_{x_m}$ act as the slice derivatives: $\dif_{x_m}(g)=\dd{g\;}{x_m}$ and $\difbar_{x_m}(g)=\dd{g\;}{x_m^c}$ on $\OO'\setminus\R$. Moreover, $g\in\sr(\OO')$ if and only if $\difbar_{x_m}(g)=0$ (see \cite[Theorem~2.2]{Gh_Pe_GlobDiff}).

\begin{example}
Consider the function $x_1x_2\in\sr(\hh^2)$ of Example \ref{ex:x1x2}. It holds $\thet 1(x_1x_2)=\dif_{x_1}(x_1x_2)=x_2$, and
$\Gamma_{x_1}(x_1x_2)=2\IM(x_1) x_2$, $\Gamma_{x_1}(x_1^2x_2)=4\IM(x_1)x_{1_0} x_2$, from which we deduce
\[
\thet{2}(x_1x_2)=\dif_{x_2}(2x_{1_0}x_2)-\overline x_1\dif_{x_2}(x_2)=2x_{1_0}-\overline x_1=x_1,
\]
while $\thb 1(x_1x_2)=\thb2(x_1x_2)=0$.
\end{example}

\begin{remark}\label{rem:slice_derivatives}
When $f\in\SF^1(\OO)$ is a slice function in $\OO\subseteq\hh^n$ and also a slice function w.r.t.\ one of the variables, say $x_h$, then one can define both the slice partial derivative $\dd{f}{x_h}$ and the slice derivative w.r.t.\ the variable $x_h$, that here we denote provisionally by $\left(\dd{}{x_h}\right)_1(f)$. 
We recall from \cite[Proposition 4.1]{Binosi1} that a slice function $f=\I(F)=\I(\sum_Ke_KF_K)\in\SF(\OO)$ is also a slice function w.r.t.\ $x_h$ if and only if $F_{K\cup\{h\}}=0$ for every $K\in\Pm(n)$ such that $K\cap\{1,\ldots,h-1\}\ne\emptyset$. 
A direct computation shows that for every $y\in\OO$ and $x_h\in\OO_h(y)$, it holds 
\[
\dd{f}{x_h}(y_1,\ldots,y_{h-1},x_h,y_{h+1},\ldots,y_n)
=\dd{f_h^{(y)}}{x_h}(y_1,\ldots,y_{h-1},x_h,y_{h+1},\ldots,y_n),
\]
where as above $f_h^{(y)}$ denotes the function $x_h\mapsto f(y_1,\ldots,y_{h-1},x_h,y_{h+1},\ldots,y_n)$. 
Since there is no ambiguity, we will denote both slice derivatives, $\dd{f}{x_h}$ and $\left(\dd{}{x_h}\right)_1(f)$, with the same symbol $\dd{f}{x_h}$. The same holds for the slice partial derivative $\dd{f}{x_h^c}$.
\end{remark}

The results of the next proposition justify the name \emph{quaternionic Wirtinger operators} given to $\thet m$ and $\thb m$.

\begin{proposition}\label{pro:powers}
For every multiindex $\ell=(\ell_1,\ldots,\ell_n)\in\nn^n$, let $x^\ell=x_1^{\ell_1}\cdots x_n^{\ell_n}$ and $\overline x^\ell=\overline x_1^{\ell_1}\cdots \overline x_n^{\ell_n}$. Let $m\in\{1,\ldots,n\}$ and $e_m=(0,\ldots,1,\ldots,0)\in\nn^n$ with $1$ only in position $m$. Then
\begin{align*}
\thet m(x^\ell)&=\dd {x^\ell}{x_m}=\ell_m x^{\ell-e_m}\quad\text{and}\quad
\thb m(x^\ell)=\dd {x^\ell}{x_m^c}=0;\\
\thet m(\overline x^\ell)&=\dd {\overline x^\ell}{x_m}=0\quad\text{and}\quad
\thb m(\overline x^\ell)=\dd {\overline x^\ell}{x_m^c}=\ell_m \overline x^{\ell-e_m}.
\end{align*}
Here $\dd {\ }{x_m}$ and $\dd {\ }{x_m^c}$ are the slice partial derivatives (see \cite[Definition~3.10]{SeveralA} and \S\ref{sec:sev_var}). 
\qed
\end{proposition}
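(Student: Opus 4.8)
The plan is to exploit that the monomials $x^\ell$ and $\overline x^\ell$ are slice functions---indeed $x^\ell=x_1^{\ell_1}\cdot\ldots\cdot x_n^{\ell_n}$ is slice-regular and $\overline x^\ell$ is its conjugate---and to reduce the whole statement to the claim that, on an arbitrary slice function $f\in\SF^1(\OO)$, the Wirtinger operators act as the slice partial derivatives: $\thet m(f)=\dd{f}{x_m}$ and $\thb m(f)=\dd{f}{x_m^c}$ on $\OO\setminus\R_\bullet$. Once this is established, the proposition follows from the elementary computation of the slice partial derivatives of the monomials. I would rely on three ingredients already at hand: the fundamental identity $\Gamma^{m-1}_K(f)=\SP^{m-1}_K(f)$, valid for slice $f$ and established above; the Almansi-type decomposition of Theorem~\ref{teo:slice_Almansi_several}; and the one-variable fact \cite[Theorem~2.2]{Gh_Pe_GlobDiff} that on a function which is slice with respect to $x_m$ the operators $\dif_{x_m}$, $\difbar_{x_m}$ reduce to the slice derivatives $\dd{}{x_m}$, $\dd{}{x_m^c}$.

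For the reduction, I would start from Definition~\ref{def:Wirtinger} and replace each $\Gamma^{m-1}_K(f)$ by $\SP^{m-1}_K(f)$, obtaining $\thet m(f)=\sum_{K\in\Pm(m-1)}(-\overline x)_{K^c}\,\dif_{x_m}\SP^{m-1}_K(f)$. By Proposition~\ref{pro:1varChar_slice} each $\SP^{m-1}_K(f)$ is a slice function with respect to $x_m$, so $\dif_{x_m}\SP^{m-1}_K(f)=\dd{}{x_m}\SP^{m-1}_K(f)$; moreover, exactly as in the proof of Theorem~\ref{teo:slice_Almansi_several}, the factor $(-\overline x)_{K^c}$ is slice-preserving and $K^c$-reduced while $\dd{}{x_m}\SP^{m-1}_K(f)$ is $\{1,\dots,m-1\}$-reduced, so the pointwise products may be read as slice products. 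On the other hand, applying $\dd{}{x_m}$ to the Almansi decomposition $f=\sum_{K\in\Pm(m-1)}(-\overline x)_{K^c}\cdot\SP^{m-1}_K(f)$ and using the Leibniz rule for slice partial derivatives (Section~\ref{sec:sev_var}), together with the observation that $(-\overline x)_{K^c}$ depends only on $x_1,\dots,x_{m-1}$ and is therefore annihilated by $\dd{}{x_m}$, yields precisely $\dd{f}{x_m}=\sum_{K\in\Pm(m-1)}(-\overline x)_{K^c}\cdot\dd{}{x_m}\SP^{m-1}_K(f)=\thet m(f)$. The same argument with $\difbar_{x_m}$ in place of $\dif_{x_m}$ gives $\thb m(f)=\dd{f}{x_m^c}$.

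Finally, I would compute the slice partial derivatives of the monomials directly. Writing $x^\ell=x_1^{\ell_1}\cdot\ldots\cdot x_n^{\ell_n}$ (the ordered pointwise product coinciding with the slice product, since the factors are reduced with respect to disjoint index sets) and using the Leibniz rule together with the one-variable identities $\dd{}{x_m}x_m^{\ell_m}=\ell_m x_m^{\ell_m-1}$, $\dd{}{x_m^c}x_m^{\ell_m}=0$ and the fact that $\dd{}{x_m}$ and $\dd{}{x_m^c}$ annihilate every factor $x_j^{\ell_j}$ with $j\ne m$, one obtains $\dd{x^\ell}{x_m}=\ell_m x^{\ell-e_m}$ and $\dd{x^\ell}{x_m^c}=0$; the conjugate monomial is handled identically, giving $\dd{\overline x^\ell}{x_m}=0$ and $\dd{\overline x^\ell}{x_m^c}=\ell_m\overline x^{\ell-e_m}$. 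Combining these with the reduction proves all four formulas on $\hh^n\setminus\R_\bullet$. The main obstacle I anticipate lies in the bookkeeping of the reduction step---specifically, justifying cleanly that $\dd{}{x_m}$ commutes with the Almansi decomposition term by term, which hinges on the reducedness properties of the factors $(-\overline x)_{K^c}$ and $\SP^{m-1}_K(f)$ so that the pointwise and slice products agree both before and after differentiation. Once this compatibility is in place, the computation on monomials is entirely routine.
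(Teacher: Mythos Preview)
Your proposal is correct and follows essentially the same route as the paper: the paper does not prove Proposition~\ref{pro:powers} directly but deduces it from the general identity $\thet m(f)=\dd{f}{x_m}$, $\thb m(f)=\dd{f}{x_m^c}$ for slice functions (Theorem~\ref{teo:Wirtinger}), whose proof uses exactly the ingredients you list---the identification $\Gamma^{m-1}_K(f)=\SP^{m-1}_K(f)$, the Almansi decomposition, the Leibniz rule, and the one-variable reduction of $\dif_{x_m},\difbar_{x_m}$ to slice derivatives. One small slip: $\SP^{m-1}_K(f)$ is $\{m,\ldots,n\}$-reduced (not $\{1,\ldots,m-1\}$-reduced), but this is precisely the reducedness needed to pass between pointwise and slice products with the factor $(-\overline x)_{K^c}$, so the argument goes through.
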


Instead of giving a direct proof of the proposition, we deduce it from a more general result (Theorem \ref{teo:Wirtinger}) valid for every slice function $f\in\SF(\OO)$, with $\OO\subseteq\hh^n$ axially symmetric. Before stating this result, we need the following consequence of Theorem \ref{teo:slice_Almansi_several}.

\begin{proposition}\label{pro:slice_Gamma_several}
If $f\in\SF(\OO)\cap\mathcal C^n(\OO,\hh)$, with $\OO\subseteq\hh^n$ an axially symmetric open set, then $\Gamma^{m}_K(f)\in\SF(\OO\setminus\R_m)$ and for $m<n$ the functions $\Gamma^{m}_K(f)$ are slice functions w.r.t.\  $x_{m+1}$. Moreover, for every $x\in\OO\setminus\R_m$ it holds 
\begin{equation}\label{eq:slice_Gamma_several}
f(x)=\sum_{K\in\Pm(m)}(-\overline{x})_{K^c}\Gamma^m_K(f)(x)=\sum_{K\in\Pm(m)}(-\overline{x})_{K^c}\cdot\Gamma^m_K(f)(x).
\end{equation}
\end{proposition}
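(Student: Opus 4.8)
The plan is to reduce the entire statement to the single identity $\Gamma^m_K(f)=\SP^m_K(f)$ on $\OO\setminus\R_m$, for every $m$ and every $K\in\Pm(m)$. Once this coincidence is established, the assertions that $\Gamma^m_K(f)\in\SF(\OO\setminus\R_m)$ and that for $m<n$ it is a slice function w.r.t.\ $x_{m+1}$ follow immediately from Proposition~\ref{pro:SP_slice} and Proposition~\ref{pro:1varChar_slice}, while the decomposition \eqref{eq:slice_Gamma_several} is nothing but Theorem~\ref{teo:slice_Almansi_several} with $\SP^m_K(f)$ replaced by $\Gamma^m_K(f)$. Thus no separate argument is needed for any of the conclusions: the whole content is the agreement of the two families of functions.

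The engine of the proof is the following one-variable lemma: if $g$ is a slice function w.r.t.\ $x_m$, then $(2\IM(x_m))^{-1}\Gamma_{x_m}(g)=(g)'_{s,x_m}$ on $\OO\setminus\R_m$. I would prove this slicewise. Fixing the remaining variables, the restriction has the form $g=F_1+\tfrac{\IM(x_m)}{|\IM(x_m)|}F_2$, where $F_1,F_2$ depend on $x_m$ only through $\alpha_m=x_{m_0}$ and $\beta_m=|\IM(x_m)|$. Since each $L^m_{ij}$ is tangential to the spheres $\s_{x_m}$, it annihilates every function of $\alpha_m,\beta_m$ alone; hence by the Leibniz rule $\Gamma_{x_m}(g)=\beta_m^{-1}\,\Gamma_{x_m}(\IM(x_m))\,F_2$. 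A direct computation of $\Gamma_{x_m}=-iL^m_{23}+jL^m_{13}-kL^m_{12}$ applied to $\IM(x_m)=x_{m_1}i+x_{m_2}j+x_{m_3}k$ gives $\Gamma_{x_m}(\IM(x_m))=2\IM(x_m)$, so that $(2\IM(x_m))^{-1}\Gamma_{x_m}(g)=\beta_m^{-1}F_2=(g)'_{s,x_m}$. (Alternatively, this identity can be quoted from \cite{CR_S_operators} through Remark~\ref{rem:Gamma}.)

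With the lemma in hand, I would prove $\Gamma^m_K(f)=\SP^m_K(f)$ by induction on $m$. The base case $m=0$ is $\Gamma^0_\emptyset(f)=f=\SP^0_\emptyset(f)$. For the inductive step, compare the recursions \eqref{eq:Gamma_mk} and \eqref{def:sp}: they are \emph{identical} except that the former applies $(2\IM(x_m))^{-1}\Gamma_{x_m}$ exactly where the latter applies $(\cdot)'_{s,x_m}$. Assuming $\Gamma^{m-1}_{K\setminus\{m\}}(f)=\SP^{m-1}_{K\setminus\{m\}}(f)$, Proposition~\ref{pro:1varChar_slice} guarantees that this function, and therefore also $x_m^{\mathbf{1}_K(m)}\SP^{m-1}_{K\setminus\{m\}}(f)$, is a slice function w.r.t.\ $x_m$; hence the lemma converts the $\Gamma_{x_m}$-step into the spherical-derivative step and yields $\Gamma^m_K(f)=\SP^m_K(f)$.

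The main obstacle is the key lemma, and concretely the clean identity $\Gamma_{x_m}(\IM(x_m))=2\IM(x_m)$, where the quaternionic coefficients $-i,j,-k$ must be tracked carefully through the noncommutative products $ij=k$, $ji=-k$, and so on. Everything else is bookkeeping: the matching recursive structure of the two definitions, and the fact---already used to make Definition~\eqref{def:sp} well posed---that multiplication by $x_m$ preserves the property of being a slice function w.r.t.\ $x_m$, so that the lemma is applicable at each stage of the induction.
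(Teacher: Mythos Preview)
Your proposal is correct and follows essentially the same approach as the paper: the paper's proof also reduces everything to the identity $\Gamma^m_K(f)=\SP^m_K(f)$, obtaining it from Proposition~\ref{pro:1varChar_slice} together with the one-variable fact $(2\IM(x_m))^{-1}\Gamma_{x_m}(g)=(g)'_{s,x_m}$ (quoted there from \cite[Proposition~9(a)]{CR_S_operators} via Remark~\ref{rem:Gamma}), and then invokes Theorem~\ref{teo:slice_Almansi_several}. Your version is simply more explicit, spelling out the induction and giving a direct verification of $\Gamma_{x_m}(\IM(x_m))=2\IM(x_m)$ in place of the citation.
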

\begin{proof}
From Proposition~\ref{pro:1varChar_slice}, Remark \ref{rem:Gamma} and \cite[Proposition~9(a)]{CR_S_operators}, it follows that when $f$ is a slice function, then it holds $\Gamma^m_K(f)=\SP^m_K(f)$ for every $m$ and $K$.
Then \eqref{eq:slice_Gamma_several} is equivalent to formula \eqref{eq:slice_Almansi_several} of Theorem \ref{teo:slice_Almansi_several}.
\end{proof}

\begin{theorem}\label{teo:Wirtinger}
Let $\OO\subseteq\hh^n$ be an axially symmetric open set. Let $f\in\SF(\OO)\cap\mathcal C^n(\OO\setminus\R_\bullet,\hh)$. Then it holds on $\OO\setminus\R_\bullet$:
\begin{equation}\label{eq:theta_slice}
\thet m(f)=\dd {f}{x_m}\quad\text{and}\quad \thb m(f)=\dd {f}{x_m^c}\text{\quad for every }m\in\{1,\ldots,n\}.
\end{equation}
The Wirtinger operators $\thet1,\ldots,\thet n$, $\thb1,\ldots,\thb n:\SF(\OO)\cap\mathcal C^n(\OO\setminus\R_\bullet,\hh)\to \SF(\OO\setminus\R_\bullet)\cap\mathcal C^0(\OO\setminus\R_\bullet,\hh)$ commute with each other, and for any $f,g\in\SF(\OO)\cap\mathcal C^n(\OO\setminus\R_\bullet,\hh)$, they satisfy the Leibniz rules
\[
\thet m(f\cdot g)=\thet m(f)\cdot g+f\cdot\thet m(g)\text{\quad and\quad}\thb m(f\cdot g)=\thb m(f)\cdot g+f\cdot\thb m(g)
\]
on $\OO\setminus\R_\bullet$. 
\end{theorem}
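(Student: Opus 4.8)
The plan is to reduce the whole statement to the first identity \eqref{eq:theta_slice}: once we know that on slice functions the operators $\thet m$ and $\thb m$ coincide with the slice partial derivatives $\frac{\partial}{\partial x_m}$ and $\frac{\partial}{\partial x_m^c}$, the commutativity and both Leibniz rules follow immediately from the corresponding properties of the slice partial derivatives recalled in \S\ref{sec:sev_var}. So the core of the work is \eqref{eq:theta_slice}, and I would prove the $\thet m$ and $\thb m$ cases in parallel, the only difference being $\dif_{x_m}$ versus $\difbar_{x_m}$.

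To establish \eqref{eq:theta_slice} I would first invoke Proposition~\ref{pro:slice_Gamma_several}: since $f$ is a slice function of class $\mathcal C^n$, for every $K\in\Pm(m-1)$ we have $\Gamma^{m-1}_K(f)=\SP^{m-1}_K(f)$, and these functions are slice functions with respect to $x_m$. Hence the first-order operator $\dif_{x_m}$ of \eqref{def:thb}, which differentiates only in the components of $x_m$, acts on each $\Gamma^{m-1}_K(f)$ as the one-variable slice derivative in $x_m$; by Remark~\ref{rem:slice_derivatives} this equals the slice partial derivative $\frac{\partial}{\partial x_m}\SP^{m-1}_K(f)$, and likewise $\difbar_{x_m}\Gamma^{m-1}_K(f)=\frac{\partial}{\partial x_m^c}\SP^{m-1}_K(f)$. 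This is the step linking the \emph{definition} of the Wirtinger operators to genuine slice derivatives. Next I would differentiate the Almansi decomposition \eqref{eq:slice_Gamma_several} at level $m-1$,
\[
f=\sum_{K\in\Pm(m-1)}(-\overline{x})_{K^c}\cdot\SP^{m-1}_K(f),
\]
by $\frac{\partial}{\partial x_m}$. The factors $(-\overline{x})_{K^c}$ involve only $\overline x_j$ with $j\in K^c\subseteq\{1,\ldots,m-1\}$, hence are independent of $x_m$, so their slice partial derivative in $x_m$ vanishes; applying the Leibniz rule for $\frac{\partial}{\partial x_m}$ with respect to the slice product (\S\ref{sec:sev_var}) the cross terms drop and
\[
\frac{\partial f}{\partial x_m}=\sum_{K\in\Pm(m-1)}(-\overline{x})_{K^c}\cdot\frac{\partial}{\partial x_m}\SP^{m-1}_K(f)=\sum_{K\in\Pm(m-1)}(-\overline{x})_{K^c}\cdot\dif_{x_m}\Gamma^{m-1}_K(f),
\]
and symmetrically for $\difbar_{x_m}$.

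To match Definition~\ref{def:Wirtinger}, which is written with the pointwise product, I would argue exactly as at the end of the proof of Theorem~\ref{teo:slice_Almansi_several}: the stem function \eqref{eq:gk} inducing $\SP^{m-1}_K(f)$ is supported on $e_H$ with $H\subseteq\{m,\ldots,n\}$, and this support is preserved by $\partial_m$ (since $\J_m$ keeps $H\subseteq\{m,\ldots,n\}$), so $\frac{\partial}{\partial x_m}\SP^{m-1}_K(f)$ is $\{1,\ldots,m-1\}$-reduced; as each $\overline x_j$ with $j<m$ is slice-preserving and $\{j\}$-reduced, the slice products above coincide with the pointwise products of Definition~\ref{def:Wirtinger}. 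This yields $\thet m(f)=\frac{\partial f}{\partial x_m}$ and $\thb m(f)=\frac{\partial f}{\partial x_m^c}$ on $\OO\setminus\R_\bullet$, which also gives Proposition~\ref{pro:powers} applied to monomials $x^\ell$ and $\overline x^\ell$.

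Finally, commutativity and the Leibniz rules are consequences of \eqref{eq:theta_slice}. Since slice partial derivatives of slice functions are again slice functions, the identities can be iterated, and on $\SF(\OO)$ each $\thet m$, $\thb m$ is the slice partial derivative induced on stem functions by the Cauchy-Riemann operators $\partial_m$, $\dibar_m$ of \eqref{def:CR}; because the $\partial_h,\dibar_h$ commute (\S\ref{sec:sev_var}) and the $\mathcal C^n$ hypothesis provides the regularity needed to exchange the relevant mixed derivatives, the Wirtinger operators commute. Likewise, using \eqref{eq:theta_slice} and the Leibniz rule for slice partial derivatives with respect to the slice product, $\thet m(f\cdot g)=\frac{\partial}{\partial x_m}(f\cdot g)=\frac{\partial f}{\partial x_m}\cdot g+f\cdot\frac{\partial g}{\partial x_m}=\thet m(f)\cdot g+f\cdot\thet m(g)$, and the same for $\thb m$. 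I expect the genuinely delicate part to be the identification of the Wirtinger operators with slice partial derivatives, in particular keeping track of the $\{1,\ldots,m-1\}$-reduced structure so that the pointwise products in Definition~\ref{def:Wirtinger} may be replaced by slice products and the Leibniz rule becomes applicable; once \eqref{eq:theta_slice} is secured, the remaining assertions are routine.
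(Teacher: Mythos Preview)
Your proposal is correct and follows essentially the same route as the paper: use Proposition~\ref{pro:slice_Gamma_several} to write $f$ via the level-$(m-1)$ Almansi decomposition, apply the slice-partial Leibniz rule noting that $(-\overline x)_{K^c}$ depends only on $x_1,\ldots,x_{m-1}$, and then identify $\dif_{x_m}\Gamma^{m-1}_K(f)$ with $\dd{}{x_m}\SP^{m-1}_K(f)$ via Remark~\ref{rem:slice_derivatives}; commutativity and Leibniz are then read off from the slice partial derivatives. Your extra step justifying that the slice product in the decomposition matches the pointwise product in Definition~\ref{def:Wirtinger} is a point the paper leaves implicit, so your write-up is in fact slightly more careful there; one terminological slip: with the convention used in the paper (cf.\ the proof of Theorem~\ref{teo:slice_Almansi_several}), $\SP^{m-1}_K(f)$ and its $x_m$-derivative are $\{m,\ldots,n\}$-reduced, not $\{1,\ldots,m-1\}$-reduced, though your intended meaning and argument are correct.
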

\begin{proof}
The case $m=1$ is immediate: since $f$ a slice function w.r.t.\ $x_1$, it follows that $\thb{1}(f)=\difbar_{x_1}(f)=\dd{f}{x_1^c}$ and $\thet1(f)=\dif_{x_1}(f)=\dd{f}{x_1}$ (recall also Remark \ref{rem:slice_derivatives}).
Let $m>1$. From Proposition \ref{pro:slice_Gamma_several} we get
\[
f(x)=\sum_{K\in\Pm(m-1)}(-\overline{x})_{K^c}\cdot\Gamma^{m-1}_K(f)(x).
\]
From the Leibniz formula for slice derivatives w.r.t.\ the slice product \cite[Proposition~3.25]{SeveralA} and additivity of the slice partial derivatives, we get
\begin{align}\label{eq:ddtheta}
\dd{f}{x_m}&=\sum_{K\in\Pm(m-1)}\dd{}{x_m}\left((-\overline{x})_{K^c}\cdot\Gamma^{m-1}_K(f)\right)=
\sum_{K\in\Pm(m-1)}(-\overline{x})_{K^c}\dd{\Gamma^{m-1}_K(f)}{x_m}=\\\notag
&=\sum_{K\in\Pm(m-1)}(-\overline{x})_{K^c}\dif_{x_m}{\Gamma^{m-1}_K(f)}=\thet m(f).
\end{align}
To obtain the second equality in \eqref{eq:ddtheta}, we used the fact that $K^c\in\Pm(m-1)$ and then $\dd{(-\overline{x})_{K^c}}{x_m}$ vanishes.  To obtain the third equality, we used Remark \ref{rem:slice_derivatives}. 
Along the same lines, using the fact that $\dd{g}{x_m^c}=\difbar_{x_m}{(g)}$ for every slice function w.r.t.\ $x_m$, one obtains the second equality in \eqref{eq:theta_slice}. From formula \eqref{eq:theta_slice} it follows that the Wirtinger operators commute with each other, since this is true for the slice partial derivatives (see formula (46) in \cite{SeveralA}). Leibniz rules follows immediately from \eqref{eq:theta_slice} and \cite[Proposition~3.25]{SeveralA}.  
\end{proof}

Now we apply Wirtinger operators to study the form of slice-regular homogeneous polynomials in the $4n$ real components of $x\in\hh^n$. 

\begin{proposition}\label{pro:independence}
Let $\OO\subseteq\hh^n$ be an axially symmetric open set and let $f\in\SF(\OO)\cap\mathcal C^n(\OO\setminus\R_\bullet,\hh)$. If $f$ depends only on the last $n-m+1$ variables $x_{m},\ldots,x_n$, then 
\[
\thet h(f)=\thb h(f)=0\,\text{ for }h=1\ldots,m-1\quad\text{and}\quad \thet m(f)=\dif_{x_m}(f),\ \thb m(f)=\difbar_{x_m}(f).
\]
\end{proposition}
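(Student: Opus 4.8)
The plan is to reduce both operators to the building blocks $\Gamma^{m-1}_K(f)$ of Definition \ref{def:Wirtinger} and to show that, for a function depending only on $x_m,\ldots,x_n$, almost all of them vanish. Since $f\in\SF(\OO)$ is a slice function, Proposition \ref{pro:slice_Gamma_several} lets me replace every $\Gamma^{j}_K(f)$ by the iterated spherical derivative $\SP^{j}_K(f)$ throughout, so it suffices to understand the functions $\SP^{j}_K(f)$ for $0\le j\le m-1$. Everything is understood to take place on the common domain $\OO\setminus\R_\bullet$, where these functions are defined.

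\textbf{Key claim.} I would prove by induction on $j$ that, if $f$ does not depend on $x_1,\ldots,x_{m-1}$, then for every $0\le j\le m-1$ one has $\SP^j_{\{1,\ldots,j\}}(f)=f$ and $\SP^j_K(f)=0$ for all other $K\in\Pm(j)$. The base case $j=0$ is the definition $\SP^0_\emptyset(f)=f$. For the inductive step I use the recursion \eqref{def:sp} together with two elementary facts about a function $g$ that is independent of $x_j$: the spherical derivative $(g)'_{s,x_j}$ vanishes, while $(x_j g)'_{s,x_j}=g$, the latter because $x_j-\overline x_j=2\IM(x_j)$ commutes with $g$ and cancels the factor $\IM(x_j)^{-1}$. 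Since $j\le m-1$, the functions $\SP^{j-1}_{K'}(f)$ given by the inductive hypothesis each equal $f$ or $0$, hence are independent of $x_j$; applying \eqref{def:sp} with the correct value of $\mathbf 1_K(j)$ then annihilates every $K$ with $j\notin K$ and every $K=K'\cup\{j\}$ with $K'\ne\{1,\ldots,j-1\}$, and reproduces $f$ precisely when $K=\{1,\ldots,j\}$.

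\textbf{Conclusion.} With the claim in hand I evaluate Definition \ref{def:Wirtinger} for each $h\in\{1,\ldots,m\}$. The index $K$ ranges over $\Pm(h-1)$, and since $h-1\le m-1$ the claim shows that only $K=\{1,\ldots,h-1\}$ contributes; for this $K$ one has $K^c=\emptyset$, so $(-\overline x)_{K^c}=1$, and $\Gamma^{h-1}_{\{1,\ldots,h-1\}}(f)=\SP^{h-1}_{\{1,\ldots,h-1\}}(f)=f$. Hence $\thet h(f)=\dif_{x_h}(f)$ and $\thb h(f)=\difbar_{x_h}(f)$ for every $h\in\{1,\ldots,m\}$. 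For $h=m$ this is exactly the second assertion of the proposition. For $h\le m-1$ the function $f$ is independent of $x_h$, and because $\dif_{x_h}$ and $\difbar_{x_h}$ (see \eqref{def:thb}) are differential operators acting only on the real components of the single variable $x_h$, both $\dif_{x_h}(f)$ and $\difbar_{x_h}(f)$ vanish, giving $\thet h(f)=\thb h(f)=0$.

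\textbf{Main obstacle.} I expect the delicate point to be the bookkeeping in the inductive step: verifying that the $x_j$-independence of the surviving $\SP^{j-1}$-terms genuinely propagates and that \eqref{def:sp} is applied with the correct exponent $\mathbf 1_K(j)$ in each of the cases $j\notin K$ and $j\in K$. Once the claim is established the rest is a direct substitution into Definition \ref{def:Wirtinger}, and no harmonicity or regularity of $f$ is needed, only that $f$ is a slice function.
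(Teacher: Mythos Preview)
Your proposal is correct and follows essentially the same approach as the paper: both arguments prove by induction that the only surviving term among $\Gamma^{h-1}_K(f)$ (equivalently $\SP^{h-1}_K(f)$, since $f$ is slice) is the one with $K=\{1,\ldots,h-1\}$, which equals $f$, and then substitute into Definition \ref{def:Wirtinger}. The paper works directly with the $\Gamma$-operators and invokes the spherical-derivative identity pointwise, whereas you invoke Proposition \ref{pro:slice_Gamma_several} once to switch to the $\SP$-notation; the inductive mechanism and the conclusion are the same.
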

\begin{proof}
We can assume $m\ge2$. We prove inductively on $h=2,\ldots,m$ that
\be\label{eq:induction}
\Gamma^{h-1}_K(f)=
\begin{cases}
f&\text{if $K=\{1,\ldots,h-1\}$},\\
0&\text{if $K\in\Pm(h-1)$, $K\ne\{1,\ldots,h-1\}$}.
\end{cases}
\eeq
If $h=2$, since $f$ does not depend on $x_1$, it holds $\Gamma^1_\emptyset(f)=(2\IM(x_1))^{-1}\Gamma_{x_1}(f)=0$,  and, thanks to \cite[Proposition~9(a)]{CR_S_operators}, $\Gamma^1_{\{1\}}(f)=(2\IM(x_1))^{-1}\Gamma_{x_1}(x_1f)=(x_1f)'_{s,x_1}=(f)^\circ_{s,x_1}=f$. 
Assume that \eqref{eq:induction} holds for the value $h-2$. Then 
\[
\Gamma^{h-1}_K(f)=
\begin{cases}
(2\IM(x_{h-1}))^{-1}\Gamma_{x_{h-1}}(x_{h-1}f)=(x_{h-1}f)'_{s,x_{h-1}}=f&\text{if $K=\{1,\ldots,h-1\}$},\\
(2\IM(x_{h-1}))^{-1}\Gamma_{x_{h-1}}(\Gamma^{h-2}_{K\setminus\{h-1\}}f)=0&\text{if $K\ne\{1,\ldots,h-1\}$},
\end{cases}
\]
and \eqref{eq:induction} is proved for every $h=2,\ldots,m$. 

If $h<m$, then $\dif_h(\Gamma^{h-1}_K(f))=\difbar_h(\Gamma^{h-1}_K(f))=0$, since $\Gamma^{h-1}_K(f)$ depends only on the variables $x_m,\ldots,x_n$. If $h=m$, in the sums defining $\thet m(f)$ and $\thb m(f)$ only the terms with $K=\{1,\ldots,m-1\}$ survive, and then
$\thet m(f)=\dif_{x_m}(f)$, $\thb m(f)=\difbar_{x_m}(f)$.
\end{proof}

\begin{corollary}
Assume that the four real components of a function $f:\hh^n\simeq\R^{4n}\to\hh\simeq\R^4$ are homogeneous polynomials of degree $d_m$ in the real variables $x_{m_0},x_{m_1},x_{m_2},x_{m_3}$, for each $m=1,\ldots,n$. Then $f$ is slice-regular if and only if it is of the form 
\[
f(x_1,\ldots,x_n)=x_1^{d_1}\cdots x_n^{d_n}a,\text{\quad with $a\in\hh$ constant}.
\]
\end{corollary}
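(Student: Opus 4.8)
The ``if'' direction is immediate: the ordered monomial $x_1^{d_1}\cdots x_n^{d_n}a$ is one of the basic slice-regular polynomials recalled in \S\ref{sec:sev_var}, and its four real components are homogeneous of degree $d_m$ in $x_{m_0},\ldots,x_{m_3}$ for each $m$. For the converse I would argue by induction on $n$. For the base case $n=1$, I would use that a slice-regular function on $\hh$ whose components are homogeneous of degree $d_1$ is entire, hence admits a globally convergent expansion $f(x)=\sum_{k\ge0}x^k a_k$ with $a_k\in\hh$; scaling by a real $t>0$ gives $f(tx)=\sum_k t^k x^k a_k$, whereas homogeneity forces $f(tx)=t^{d_1}f(x)$, so comparing powers of $t$ leaves only $a_{d_1}$ and $f(x)=x^{d_1}a$.

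For the inductive step I would assume the claim for $n-1$ variables. Since $f$ is slice-regular it is in particular slice-regular w.r.t.\ $x_1$ (Proposition \ref{pro:1varChar}), so for each fixed $(x_2,\ldots,x_n)$ the partial function $x_1\mapsto f(x_1,x_2,\ldots,x_n)$ is slice-regular with components homogeneous of degree $d_1$; the base case then gives $f(x)=x_1^{d_1}\,b(x_2,\ldots,x_n)$, where $b(x_2,\ldots,x_n):=f(1,x_2,\ldots,x_n)$. The plan is then to show that $b$ meets the inductive hypothesis in the variables $x_2,\ldots,x_n$. Its components are homogeneous of degree $d_m$ in $x_m$ for $m\ge2$, by restricting the homogeneity of $f$ to $x_1=1$. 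It is a slice function in $x_2,\ldots,x_n$: evaluating the stem function $F=\sum_K e_K F_K$ of $f$ at the real point $x_1=1$ annihilates every component $F_K$ with $1\in K$ by the stem relation \eqref{eq:stem}, and what remains is a genuine stem function in the remaining variables. Finally, for $m\ge2$ the operator $\difbar_{x_m}$ differentiates only the $x_m$-variables, so it commutes with evaluation at $x_1=1$; hence $\dd{b}{x_m^c}=\difbar_{x_m}b=\big(\difbar_{x_m}f\big)\big|_{x_1=1}=\big(\dd{f}{x_m^c}\big)\big|_{x_1=1}=0$ on $\{\IM(x_m)\neq0\}$, so $b$ is slice-regular. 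The inductive hypothesis yields $b=x_2^{d_2}\cdots x_n^{d_n}a$ with $a\in\hh$ constant, and associativity of quaternionic multiplication gives $f(x)=x_1^{d_1}\,b=x_1^{d_1}x_2^{d_2}\cdots x_n^{d_n}a$.

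The step I expect to be the main obstacle is the regularity transfer to the coefficient $b$: I must be sure that restricting to the \emph{real} value $x_1=1$ (rather than passing to a spherical value) produces an honest slice function on which the one-variable operators $\difbar_{x_m}$ still compute the slice partial derivatives $\dd{\ }{x_m^c}$, so that the vanishing $\dd{f}{x_m^c}=0$ really descends to $b$. A shorter but less self-contained alternative would be to invoke that every slice-regular polynomial is a finite right-$\hh$-combination of the ordered monomials $x^\ell=x_1^{\ell_1}\cdots x_n^{\ell_n}$: writing $f=\sum_\ell x^\ell a_\ell$, multi-homogeneity would then force $a_\ell=0$ unless $\ell=(d_1,\ldots,d_n)$, giving the statement at once. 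Either way the algebra is routine; the real content is the one-variable reduction together with the characterizations recalled in \S\ref{sec:sev_var}.
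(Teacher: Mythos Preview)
Your overall strategy matches the paper's: peel off $x_1^{d_1}$ using the one-variable result, then iterate. The gap is exactly where you flagged it, and it is a real gap, not just a worry. Your chain
\[
\frac{\partial b}{\partial x_m^c}=\difbar_{x_m}b=(\difbar_{x_m}f)\big|_{x_1=1}=\Big(\frac{\partial f}{\partial x_m^c}\Big)\Big|_{x_1=1}=0
\]
breaks at both ends for $m\ge 3$. The first equality needs $b$ to be a \emph{one-variable} slice function w.r.t.\ $x_m$; that is true for $m=2$ (the first variable of $b$) but fails already for $b=x_2x_3$ and $m=3$, where $\difbar_{x_3}(x_2x_3)=\tfrac12\big(x_2-(\IM x_3)^{-1}x_2\,\IM x_3\big)\neq 0$ while $\partial(x_2x_3)/\partial x_3^c=0$. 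The third equality likewise has no justification once $m\ge3$: Example~\ref{ex:x1x2} is precisely the warning that $\difbar_{x_m}$ does not compute $\partial/\partial x_m^c$ on slice functions beyond the first variable, and restricting only $x_1$ to a real value does not repair this.

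The paper's proof sidesteps the issue by using the Wirtinger operators $\thb m$ instead of $\difbar_{x_m}$. Since $\thb m=\partial/\partial x_m^c$ on all slice functions (Theorem~\ref{teo:Wirtinger}) and satisfies a Leibniz rule, one gets $0=\thb2(f)=\thb2(x_1^{d_1})\cdot g+x_1^{d_1}\cdot\thb2(g)=x_1^{d_1}\,\thb2(g)$; then Proposition~\ref{pro:independence} (which is the substantive point here) says that because $g$ depends only on $x_2,\ldots,x_n$, $\thb2(g)$ collapses to $\difbar_{x_2}(g)$, so $g$ is slice-regular w.r.t.\ $x_2$ and one iterates. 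If you prefer to avoid the Wirtinger machinery, a clean fix of your argument is to stay at the stem level: with $b=\I(G)$, $G_K=F_K|_{z_1=1}$ for $K\subseteq\{2,\ldots,n\}$, the operators $\dibar_m$ for $m\ge2$ commute with restriction to $z_1=1$, so $\dibar_mG=(\dibar_mF)|_{z_1=1}=0$ and $b$ is slice-regular in $(x_2,\ldots,x_n)$; then your induction goes through. Your ``shorter alternative'' via a monomial expansion would also need an independent proof that slice-regular polynomials are spanned by the ordered $x^\ell$, which the paper does not supply.
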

\begin{proof}
The one-variable case was proved in \cite[\S2.2]{Gh_Pe_GlobDiff}. If $f$ is slice-regular, then it is slice-regular on $\hh$ w.r.t.\ $x_1$. We then get that $f=x_1^{d_1}g$, with $g$ independent of $x_1$. By Leibniz rule and Proposition \ref{pro:independence} we get $0=\thb2(f)=x_1^{d_1}\thb2(g)=x_1^{d_1}\difbar_{x_2}(g)$, i.e.,  $g$ is slice-regular w.r.t.\ $x_2\in\hh$. Since $g$ is homogeneous of degree $d_2$ in $x_{2_0},x_{2_1},x_{2_2},x_{2_3}$, we obtain as above that $g(x_2,\ldots,x_n)=x_2^{d_2}h(x_3,\ldots,x_n)$. Iterating this argument we get the result.   
\end{proof}

Quaternionic Wirtinger operators satisfy also a conjugation property. Given a slice function $f=\I(F)$ on $\OO\subseteq\hh^n$, with $F=\sum_Ke_KF_K$, let $\widetilde F=\sum_Ke_K(-1)^{|K|}F_K$ and let
\begin{equation}\label{eq:conjugate_slice}
\overline f:=\I(\widetilde F)
\end{equation}
be the \emph{conjugate slice function} associated with $f$. 
Clearly $\overline f\in\SF(\OO)$ and $\overline{\overline f}=f$. Moreover, $(\overline{f\cdot g)}=\overline f\cdot\overline g$ for every pair of slice functions $f,g\in\SF(\OO)$. 

If $\overline f\in\sr(\OO)$, i.e., $\dibar_h\widetilde F=0$ for every $h=1,\ldots,n$, then $\partial_h F=0$ for every $h=1,\ldots,n$ ($F$ is anti-holomorphic), and vice versa (see \cite[Lemma~3.9]{SeveralA}). In this case $f$ is called \emph{conjugate slice-regular}, 
and it holds $\dd{f}{x_h}=0$ for every $h=1,\ldots,n$. 
Therefore a slice function $f$ is slice-regular if and only if the conjugate $\overline f$ is conjugate slice-regular. 
For example, every product\, $\overline x^\ell=\overline x_1^{\ell_1}\cdots \overline x_n^{\ell_n}=\overline{x^\ell}$ with $\ell\in\nn^n$, is conjugate slice-regular on $\hh^n$. 

\begin{proposition}\label{pro:Wirtinger_conjugate}
{}Let $\OO\subseteq\hh^n$ be an axially symmetric open set. Let $f\in\SF(\OO)\cap\mathcal C^n(\OO\setminus\R_\bullet,\hh)$. Then it holds on $\OO\setminus\R_\bullet$:
\begin{equation}\label{eq:Wirt_conj}
\overline{\thet m(f)}=\thb m(\overline f)\quad\text{and}\quad \overline{\thb m(f)}=\thet m(\overline f)\text{\quad for every }m\in\{1,\ldots,n\}.
\end{equation}
\end{proposition}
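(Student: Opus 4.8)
The plan is to reduce the whole statement to Theorem~\ref{teo:Wirtinger} together with the behaviour of the conjugation operation at the level of stem functions. First I observe that the conjugate of a $\mathcal C^n$ slice function is again a $\mathcal C^n$ slice function: writing $f=\I(F)$ with $F=\sum_K e_KF_K$, the stem function $\widetilde F=\sum_K e_K(-1)^{|K|}F_K$ of $\overline f=\I(\widetilde F)$ has exactly the same regularity as $F$ and still satisfies the stem condition~\eqref{eq:stem}. Hence Theorem~\ref{teo:Wirtinger} applies to both $f$ and $\overline f$, giving $\thet m(f)=\dd{f}{x_m}$, $\thb m(f)=\dd{f}{x_m^c}$ and likewise for $\overline f$. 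Consequently \eqref{eq:Wirt_conj} is equivalent to the two identities
\[
\overline{\dd{f}{x_m}}=\dd{\overline f}{x_m^c}\qquad\text{and}\qquad \overline{\dd{f}{x_m^c}}=\dd{\overline f}{x_m}
\]
for the slice partial derivatives, where the overline now denotes the conjugate slice function of~\eqref{eq:conjugate_slice}.

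Next I would pass to stem functions. Denote by $w\mapsto\widetilde w$ the $\R$-linear involution of $\hh\otimes\R^{2^n}$ defined on basis elements by $e_Ka\mapsto(-1)^{|K|}e_Ka$, so that $\overline g=\I(\widetilde G)$ whenever $g=\I(G)$, and recall from \eqref{def:CR} that $\dd{f}{x_m}=\I(\partial_mF)$ and $\dd{f}{x_m^c}=\I(\dibar_mF)$. The statement then hinges on a single sign computation: checking on the basis elements $e_K$, in the two cases $h\in K$ and $h\notin K$, that the involution anticommutes with each complex structure,
\[
\widetilde{\J_h(w)}=-\J_h(\widetilde w)\qquad(h=1,\ldots,n).
\]
Since the involution acts only on the $\hh$-valued components through the signs $(-1)^{|K|}$, it commutes with $\partial_{\alpha_h}$ and $\partial_{\beta_h}$; combining this with the anticommutation and the formulas~\eqref{def:CR} yields at once $\widetilde{\partial_mF}=\dibar_m\widetilde F$ and $\widetilde{\dibar_mF}=\partial_m\widetilde F$.

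Finally I would translate back through $\I$. From $\widetilde{\partial_mF}=\dibar_m\widetilde F$ one gets $\overline{\dd{f}{x_m}}=\I(\widetilde{\partial_mF})=\I(\dibar_m\widetilde F)=\dd{\overline f}{x_m^c}$, and symmetrically $\overline{\dd{f}{x_m^c}}=\dd{\overline f}{x_m}$; feeding these into Theorem~\ref{teo:Wirtinger} gives \eqref{eq:Wirt_conj}. I do not expect a genuine obstacle: the only real content is the anticommutation relation $\widetilde{\J_h(w)}=-\J_h(\widetilde w)$, an elementary verification on basis vectors, and the rest is bookkeeping via $\I$ together with the already-established identities $\thet m(f)=\dd{f}{x_m}$, $\thb m(f)=\dd{f}{x_m^c}$ for slice functions. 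The one point I would state with care is that $\overline f$ is again a $\mathcal C^n$ slice function, so that Theorem~\ref{teo:Wirtinger} is legitimately applicable to it.
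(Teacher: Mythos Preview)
Your proposal is correct and follows the same overall route as the paper: reduce via Theorem~\ref{teo:Wirtinger} to the stem-function identity $\widetilde{\partial_m F}=\dibar_m\widetilde F$ (and its conjugate), then verify that identity. The only difference is in the execution of this last step: the paper computes the $K$-components of $\widetilde{\partial_m F}$ and $\dibar_m\widetilde F$ explicitly (using \cite[Lemma~3.9]{SeveralA}) and matches them via the parity observation $|K|\not\equiv|K\Delta\{m\}|\pmod 2$, whereas you package the same computation as the anticommutation relation $\widetilde{\J_h(w)}=-\J_h(\widetilde w)$, checked on basis vectors $e_K$ in the two cases $h\in K$, $h\notin K$. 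Your formulation is a cleaner way to say the same thing, and your explicit remark that $\overline f$ again lies in $\SF(\OO)\cap\mathcal C^n(\OO\setminus\R_\bullet,\hh)$, so that Theorem~\ref{teo:Wirtinger} legitimately applies to it, is a point the paper uses implicitly.
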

\begin{proof}
It is sufficient to prove the first equality, since the other follows from the first by conjugation. Let $f=\I(F)$ and $\overline f=\I(\widetilde F)$ be as above and let $\OO=\OO_D$. Then $F$ is of class $\mathcal C^n$ in $D\setminus\R$. 
From Theorem \ref{teo:Wirtinger} we deduce the equivalences 
\[
\overline{\thet m(f)}=\thb m(\overline f)\Leftrightarrow \overline{\dd{f}{x_m}}=\dd{\overline f}{x_m^c}\Leftrightarrow \overline{\I(\partial_mF)}=\I(\dibar_m\widetilde F)\Leftrightarrow \widetilde{\partial_m F}=\dibar_m\widetilde F\text{\quad in }D\setminus\rr.
\]
From \cite[Lemma~3.9]{SeveralA} we get that the $K$-components of $\widetilde{\partial_m F}$ and of $\dibar_m\widetilde F$ are given by
\begin{align*}
(\widetilde{\partial_m F})_K &=\frac12(-1)^{|K|}\left(\dd{F_K}{\alpha_m}-
\dd{F_{K\Delta\{m\}}}{\beta_m}(-1)^{|K\cap\{m\}|}\right)\text{\quad and}
\\
(\dibar_m\widetilde F)_K&=\frac12\left((-1)^{|K|}\dd{F_K}{\alpha_m}+(-1)^{|K\Delta\{m\}|}
\dd{F_{K\Delta\{m\}}}{\beta_m}(-1)^{|K\cap\{m\}|}\right).
\end{align*}
Here $(\alpha_1+i\beta_1,\ldots,\alpha_n+i\beta_n)$ are the coordinates of $D$, and $K\Delta\{m\}$ denotes the symmetric difference of the sets $K$ and $\{m\}$. 
Since $|K|$ and $|K\Delta\{m\}|$ have different parity, $(\widetilde{\partial_m F})_K=(\dibar_m\widetilde F)_K$ for every $K$ and the statement is proved.
\end{proof}

The quaternionic Wirtinger operators operators $\thb 1,\ldots,\thb n$ characterize slice-regular functions in the class of slice functions. This result extends to several variables the one-variable characterization obtained in \cite[Theorems 2.2 and 2.4]{Gh_Pe_GlobDiff}.  

\begin{theorem}\label{teo:Wirtinger_regular}
Let $\OO\subseteq\hh^n$ be an axially symmetric open set. Let $f\in\SF(\OO)\cap\mathcal C^0(\OO,\hh)\cap\mathcal C^n(\OO\setminus\R_\bullet,\hh)$.  
Then $f\in\sr(\OO)$ if and only if $\thb m(f)=0$ for every $m\in\{1,\ldots,n\}$.
\end{theorem}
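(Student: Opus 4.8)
The plan is to base everything on Theorem~\ref{teo:Wirtinger}, which on the dense open set $\OO\setminus\R_\bullet$ identifies the Wirtinger operators with the slice partial derivatives: $\thb m(f)=\partial f/\partial x_m^c$ for every $m\in\{1,\ldots,n\}$. By definition a slice function lies in $\sr(\OO)$ exactly when all its slice partial derivatives $\partial f/\partial x_m^c$ vanish. Hence the forward implication is immediate: if $f\in\sr(\OO)$ then $\partial f/\partial x_m^c=0$ on $\OO$, so $\thb m(f)=0$ on $\OO\setminus\R_\bullet$, which is the entire domain on which $\thb m$ is defined.

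For the converse I would first turn the single scalar hypothesis $\thb m(f)=0$ into the vanishing of the individual components appearing in the definition of $\thb m$. Since $f$ is a slice function, Proposition~\ref{pro:slice_Gamma_several} gives $\Gamma^{m-1}_K(f)=\SP^{m-1}_K(f)$, so
\[
\thb m(f)=\sum_{K\in\Pm(m-1)}(-\overline x)_{K^c}\,\difbar_{x_m}\SP^{m-1}_K(f).
\]
This is an Almansi-type decomposition at level $m-1$: by Proposition~\ref{pro:SP_slice} each $\SP^{m-1}_K(f)$ is constant on the sets $\s_{y_1}\times\cdots\times\s_{y_{m-1}}\times\{y_m\}\times\cdots\times\{y_n\}$, and because $\difbar_{x_m}$ differentiates only in the variable $x_m$ it preserves this constancy. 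Applying the uniqueness part of Proposition~\ref{pro:converse_slice_Almansi_several} to the zero slice function forces $\difbar_{x_m}\SP^{m-1}_K(f)=0$ for every $K\in\Pm(m-1)$; as $\SP^{m-1}_K(f)$ is a slice function with respect to $x_m$, this means it is slice-regular with respect to $x_m$ on $\OO\setminus\R_\bullet$.

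Reindexing $m\mapsto m+1$, I obtain that for every $m\in\{0,\ldots,n-1\}$, every $K\in\Pm(m)$ and every $y\in\OO\setminus\R_\bullet$ the restriction $\SP^{m,y}_K(f)$ belongs to $\sr(\OO_{m+1}(y)\setminus\R)$; the case $m=0$ is exactly the statement that $f$ is slice-regular with respect to $x_1$. These are precisely the hypotheses of Proposition~\ref{pro:1varChar2}, and its conclusion is that $f\in\sr(\OO)$, which closes the converse.

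The step I expect to be the genuine obstacle is this last one. The Wirtinger identities, and hence all the vanishing statements above, are only available on the dense subset $\OO\setminus\R_\bullet$, i.e.\ away from the loci where some $x_k$ is real; on these loci $f$ is merely continuous and the stem function need not even be differentiable. Promoting slice-regularity from $\OO\setminus\R_\bullet$ to all of $\OO$---equivalently, extending the Cauchy--Riemann relations for the components of the stem function across the sets $\{\beta_k=0\}$---is exactly where the continuity hypothesis $f\in\mathcal C^0(\OO,\hh)$ is indispensable, and it is the content packaged inside Proposition~\ref{pro:1varChar2} through the Painlev\'e and Osgood arguments on the complex slices $\OO_D(J)$. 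Thus the real care is to check that $\thb m(f)=0$ cleanly yields the hypotheses of that proposition, rather than to redo the extension by hand; along the way one should also verify the degenerate case $m=1$, where $\thb1=\difbar_{x_1}$ and the level-$0$ decomposition is trivial, and keep track of the smoothness so that each $\difbar_{x_m}\SP^{m-1}_K(f)$ is defined on $\OO\setminus\R_\bullet$.
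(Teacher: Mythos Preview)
Your proposal is correct and follows essentially the same route as the paper: identify $\thb m$ with $\partial/\partial x_m^c$ via Theorem~\ref{teo:Wirtinger}, rewrite $\thb m(f)=0$ as a level-$(m-1)$ Almansi decomposition of the zero function, invoke the uniqueness in Proposition~\ref{pro:converse_slice_Almansi_several} to force each $\difbar_{x_m}\SP^{m-1}_K(f)=0$, and conclude with Proposition~\ref{pro:1varChar2}. The only difference is in how the constancy of $g_K=\difbar_{x_m}\SP^{m-1}_K(f)$ on $\s_{y_1}\times\cdots\times\s_{y_{m-1}}\times\{y_m\}\times\cdots\times\{y_n\}$ is checked: the paper does this by computing the stem function $\dibar_m G^{m-1}_K$ explicitly and observing that all surviving $e_H$-components have $H\cap\{1,\ldots,m-1\}=\emptyset$, whereas you argue directly that $\difbar_{x_m}$, being built from $\partial/\partial x_{m_i}$ and left multiplication by $(\IM(x_m))^{-1}$, touches only the $x_m$-coordinates and hence preserves constancy in the first $m-1$ variables. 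Your argument is valid and slightly more transparent; the paper's stem-function computation has the side benefit of confirming that $g_K$ is again a slice function of $n$ variables, but that is not strictly needed for the uniqueness step.
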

\begin{proof}
If $f\in\sr(\OO)$, Theorem \ref{teo:Wirtinger} gives $\thb m(f)=\dd{f}{x_m^c}=0$ on $\OO\setminus\R_\bullet$ for every $m\in\{1,\ldots,n\}$. Conversely, assume that $\thb m(f)=0$ for every $m$. Then $\thb 1(f)=\difbar_{x_1}(f)=0$, i.e., $f$ is slice-regular w.r.t.\ $x_1$, and
\be\label{eq:zero}
\thb m(f)=\sum_{K\in\Pm(m-1)}(-\overline{x})_{K^c}\difbar_{x_m}\Gamma^{m-1}_K(f)=\sum_{K\in\Pm(m-1)}(-\overline{x})_{K^c}\difbar_{x_m}\SP^{m-1}_K(f)=0
\eeq
for $m\in\{2,\ldots,n\}$. We show that the functions $g_K:=\difbar_{x_m}\SP^{m-1}_K(f)=\dd{\SP^{m-1}_K(f)}{x_m^c}$ are constant on the set $\s_{y_1}\times\cdots\times\s_{y_{m-1}}\times\{y_{m}\}\times\cdots\times\{y_n\}$ for every $y\in\OO\setminus\R_\bullet$. 
From Proposition \ref{pro:SP_slice} it follows that $\SP^{m-1}_K(f)$ is induced by a stem function $G^{m-1}_K$ of the form $\textstyle\sum_{H\in\Pm(n),H_{m-1}=\emptyset}e_HG_H$, where $H_{m-1}=H\cap\{1,\ldots,m-1\}$. 
In view of Remark \ref{rem:slice_derivatives} and \cite[Lemma 3.9]{SeveralA}, the function $g_K$ is induced by the stem function in $n$ variables
\begin{align}\label{eq:dbargk}
&\dibar_m\left(
\sum_{H\in\Pm(n),H_{m-1}=\emptyset} e_HG_H\right)=
\sum_{H\in\Pm(n),H_{m-1}=\emptyset} \left(e_H\dd{G_H}{\alpha_m}+\I_m(e_H)\dd{G_H}{\beta_m}\right)=\\
&=
\sum_{H\ni m,H_{m-1}=\emptyset} \left(e_H\dd{G_H}{\alpha_m}-e_{H\setminus\{m\}}\dd{G_H}{\beta_m}\right)+
\sum_{H\not\ni m,H_{m-1}=\emptyset} \left(e_H\dd{G_H}{\alpha_m}+e_{H\cup\{m\}}\dd{G_H}{\beta_m}\right).\notag
\end{align}
Since $(H\Delta\{m\})\cap\{1,\ldots,m-1\}=\emptyset$ for every $H\in\Pm(n)$ with $H_{m-1}=\emptyset$, the $K'$-components of the stem function inducing $g_K$ are zero for every $K'\ne\emptyset$ such that $K'\cap\{1,\ldots,m-1\}=\emptyset$. We infer that $g_K$ is constant on the set $\s_{y_1}\times\cdots\times\s_{y_{m-1}}\times\{y_{m}\}\times\cdots\times\{y_n\}$. 
From \eqref{eq:zero} and Proposition \ref{pro:converse_slice_Almansi_several} applied in $\OO\setminus\R_\bullet$, we obtain that $g_K=\difbar_{x_m}\SP^{m-1}_K(f)=0$ for every $K\in\Pm(m-1)$ and every $m=2,\ldots,n$. Therefore $\SP^{m-1}_K(f)$ is slice-regular w.r.t.\ $x_m$, i.e., $\SP^{m-1,y}_K(f)\in\sr(\OO_{m}(y)\setminus\R)$ for every $y\in\OO\setminus\R_\bullet$. 
Thanks to Proposition \ref{pro:1varChar2}, we can conclude that $f\in\sr(\OO)$. 
\end{proof}

\begin{corollary}\label{cor:loc_Wirtinger_regular}
If $f\in\sr_{loc}(U)$, with $U$ open in $\hh^n$, then $\thb m(f)=0$ for every $m\in\{1,\ldots,n\}$. 
\end{corollary}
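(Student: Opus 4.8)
The plan is to combine two results already established in the excerpt: the local extendibility theorem (Theorem~\ref{teo:localext}), which produces genuine slice-regular functions near each point, and the global characterization of slice-regularity by the Wirtinger operators (Theorem~\ref{teo:Wirtinger_regular}), which vanishes on those extensions. The bridge between them is the observation that $\thb m$ is a \emph{local} operator, so its value at a point is unchanged when $f$ is replaced by any function agreeing with it near that point.

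First I would fix an arbitrary $y\in U$. Since $f\in\sr_{loc}(U)$, Theorem~\ref{teo:localext} provides an open neighbourhood $U_y\subseteq U$ of $y$ and a slice-regular function $\widetilde f^{(y)}\in\sr(\widetilde{U_y})$ on the symmetric completion $\widetilde{U_y}$ that extends $f_{|U_y}$; in particular $\widetilde{U_y}$ is axially symmetric and $f=\widetilde f^{(y)}$ on $U_y$. As a slice-regular function, $\widetilde f^{(y)}$ is real analytic, hence it lies in $\SF(\widetilde{U_y})\cap\mathcal C^0(\widetilde{U_y},\hh)\cap\mathcal C^n(\widetilde{U_y}\setminus\R_\bullet,\hh)$, so Theorem~\ref{teo:Wirtinger_regular} applies on the axially symmetric set $\widetilde{U_y}$ and yields $\thb m(\widetilde f^{(y)})=0$ on $\widetilde{U_y}\setminus\R_\bullet$ for every $m\in\{1,\ldots,n\}$.

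The second step is the locality of $\thb m$. Inspecting Definition~\ref{def:Wirtinger} together with the recursion \eqref{eq:Gamma_mk}, one sees that $\thb m$ is assembled from pointwise multiplications (by $(-\overline x)_{K^c}$, by the powers $x_m^{\mathbf{1}_K(m)}$, and by $(2\IM(x_m))^{-1}$) and from the first-order differential operators $\Gamma_{x_m}$ and $\difbar_{x_m}$. Each auxiliary function $\Gamma^{m-1}_K(f)$ is therefore obtained by a composition of local operations, and the same holds for $\thb m$ itself; consequently the value of $\thb m(f)$ at a point $x$ depends only on the germ of $f$ at $x$. Since $f$ and $\widetilde f^{(y)}$ coincide on the open set $U_y$, they share the same germ at every $x\in U_y\setminus\R_\bullet$, so $\thb m(f)=\thb m(\widetilde f^{(y)})=0$ on $U_y\setminus\R_\bullet$. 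Letting $y$ range over $U$, the neighbourhoods $U_y$ cover $U$, and thus $\thb m(f)=0$ on all of $U\setminus\R_\bullet$, which is exactly the domain on which $\thb m$ acts on $f\in\mathcal C^n(U,\hh)$.

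I do not expect a genuine obstacle: the statement is essentially the juxtaposition of the global regularity characterization with the local extension theorem. The only point deserving explicit emphasis is the locality of the operators $\Gamma^{m-1}_K$ and $\thb m$, since it is precisely this feature that licenses replacing $f$ by its local slice-regular extension $\widetilde f^{(y)}$ without altering $\thb m(f)$; once that is made clear, the conclusion is immediate.
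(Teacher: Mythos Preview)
Your proof is correct and follows exactly the route the paper indicates: invoke Theorem~\ref{teo:localext} to obtain local slice-regular extensions and then apply Theorem~\ref{teo:Wirtinger_regular} to those extensions. The only difference is that you spell out explicitly the locality of $\thb m$ (which is what allows passing from $\widetilde f^{(y)}$ back to $f$), whereas the paper's one-line proof leaves this implicit.
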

\begin{proof}
The thesis follows from the local extendibility theorem (Theorem \ref{teo:localext}) and Theorem \ref{teo:Wirtinger_regular}. 
\end{proof}

In the next section we will investigate the validity of the converse of Corollary \ref{cor:loc_Wirtinger_regular} on an arbitrary open subset of $\hh^n$.

\begin{remark}
If $\OO=\OO_D$, with $D\subseteq\C^n$ connected and intersecting $\R^n$, then the assumptions on $f$ in Theorem \ref{teo:Wirtinger_regular} can be relaxed, since the sliceness of $f$ is a consequence of condition $\thb m(f)=0$ for every $m\in\{1,\ldots,n\}$. This can be shown using the analogous one-variable result proved in \cite[Theorem 2.4]{Gh_Pe_GlobDiff}. 
\end{remark}

\section{Local slice functions of one and several variables}\label{sec:Local_slice_functions}

We begin this section showing that the decomposition of Theorem \ref{teo:slice_Almansi_several} can be extended to any sufficiently smooth function (not necessarily slice) $f$ on any open subset of $\hh^n$. Here the functions $\SP^m_K(f)$ 
will be replaced by the functions $\Gamma^m_K(f)$ obtained by applying the differential operators $\Gamma_{x_m}$. 

\begin{theorem}
\label{teo:no_slice_Almansi_several}
Let $f\in\mathcal C^n(U\setminus\R_\bullet,\hh)$ be a function defined on an open set $U\subseteq\hh^n$. Then for every fixed $m\in\{1,\ldots, n\}$ it holds  
\begin{equation}\label{eq:no_slice_Gamma_several}
f(x)=\sum_{K\in\Pm(m)}(-\overline{x})_{K^c}\Gamma^m_K(f)(x)
\end{equation}
for every $x\in U\setminus\R_\bullet$. 
\end{theorem}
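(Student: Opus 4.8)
The plan is to prove \eqref{eq:no_slice_Gamma_several} by induction on $m$, following verbatim the scheme of the proof of Theorem \ref{teo:slice_Almansi_several}, but with the iterated spherical derivatives $\SP^m_K(f)$ replaced throughout by the functions $\Gamma^m_K(f)$. The essential new difficulty is that $f$ is no longer assumed to be a slice function, so I cannot invoke the identity $\Gamma^m_K(f)=\SP^m_K(f)$ that powered Proposition \ref{pro:slice_Gamma_several} (and which held only because $(2\IM(x))^{-1}\Gamma_x$ acts as a spherical derivative on slice functions). Instead the whole argument must rest on a purely \emph{local}, one-variable differential identity for the operator $(2\IM(x_h))^{-1}\Gamma_{x_h}$ valid for every $\mathcal C^1$ function.

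The crux is the following one-variable claim: for a $\mathcal C^1$ function $g$ of a single quaternionic variable $x=x_0+\IM(x)$ (the remaining variables being frozen), it holds on $\{\IM(x)\ne0\}$ that
\[
g=(2\IM(x))^{-1}\Gamma_{x}(xg)-\overline x\,(2\IM(x))^{-1}\Gamma_{x}(g).
\]
To verify it I would write $\Gamma_x=\eta_1L_1+\eta_2L_2+\eta_3L_3$ with $(\eta_1,\eta_2,\eta_3)=(-i,j,-k)$ and $(L_1,L_2,L_3)=(L_{23},L_{13},L_{12})$, first compute the elementary fact $\Gamma_x(x)=2\IM(x)$, and then, using the scalar Leibniz rule for the derivations $L_{ij}$ together with $\eta_a x_0=x_0\eta_a$, obtain
\[
\Gamma_x(xg)=2\IM(x)\,g+x_0\,\Gamma_x(g)+\sum_{a}\eta_a\,\IM(x)\,L_a g.
\]
Since $\IM(x)(2\IM(x))^{-1}=\tfrac12$, substituting this and $\overline x=x_0-\IM(x)$ into the claimed identity makes the $x_0$-terms cancel, and the identity reduces to the vanishing of $\sum_a\{\eta_a,\IM(x)\}\,L_a g$, where $\{\cdot,\cdot\}$ is the anticommutator. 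A short computation gives $\{\eta_1,\IM(x)\}=2x_1$, $\{\eta_2,\IM(x)\}=-2x_2$, $\{\eta_3,\IM(x)\}=2x_3$, whence
\[
\sum_a\{\eta_a,\IM(x)\}\,L_a g=2\bigl(x_1L_{23}-x_2L_{13}+x_3L_{12}\bigr)g=2\,\mathbf x\cdot(\mathbf x\times\nabla)\,g=0.
\]
(Equivalently, in view of Remark \ref{rem:Gamma}, this is the general non-slice counterpart of the relation used via \cite[Proposition~9(a)]{CR_S_operators}.)

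With this base identity in hand, I would apply it in the variable $x_{m+1}$ to the function $g=\Gamma^m_K(f)$, which is of class $\mathcal C^{n-m}$ on $U\setminus\R_m$ and hence at least $\mathcal C^1$ in $x_{m+1}$ whenever $m<n$; invoking the recursive definition \eqref{eq:Gamma_mk} then yields
\[
\Gamma^m_K(f)=\Gamma^{m+1}_{K\cup\{m+1\}}(f)-\overline x_{m+1}\,\Gamma^{m+1}_K(f)
\]
on $U\setminus\R_{m+1}$, which is the exact analogue of \eqref{eq:smK}. The base case $m=1$ of \eqref{eq:no_slice_Gamma_several} is precisely the one-variable identity above (read off with $K=\{1\}$ and $K=\emptyset$). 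For the inductive step I would substitute the displayed relation into the hypothesis $f=\sum_{K\in\Pm(m)}(-\overline x)_{K^c}\Gamma^m_K(f)$, split the resulting two sums according to whether $m+1\in H$, and re-index over $\Pm(m+1)$ exactly as in the proof of Theorem \ref{teo:slice_Almansi_several}, arriving at $f=\sum_{H\in\Pm(m+1)}(-\overline x)_{H^c}\,\Gamma^{m+1}_H(f)$.

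The main obstacle is squarely the base one-variable identity: in the slice setting it came for free, whereas here it must be established as a genuine differential identity on arbitrary $\mathcal C^1$ functions, and the reduction to the classical fact $\mathbf x\cdot(\mathbf x\times\nabla)=0$ is the one computation that carries real content. Once that is secured, the remaining combinatorial induction is routine and formally identical to the slice case; the only bookkeeping to watch is the domain of definition and the loss of one order of differentiability at each step, which is exactly why the hypothesis $f\in\mathcal C^n(U\setminus\R_\bullet,\hh)$ is what is needed to push the induction all the way to $m=n$.
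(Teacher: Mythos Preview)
Your proposal is correct and follows essentially the same approach as the paper: induction on $m$, with the one-variable identity $g=(2\IM(x))^{-1}\Gamma_x(xg)-\overline x\,(2\IM(x))^{-1}\Gamma_x(g)$ as the engine for both the base case and the inductive step, followed by the same substitution-and-reindexing over $\Pm(m+1)$. The only difference is that the paper cites this identity as \cite[Proposition~7]{CR_S_operators}, whereas you supply a direct proof by reducing it to the scalar triple-product identity $x_1L_{23}-x_2L_{13}+x_3L_{12}=0$; your computation is correct and makes the argument self-contained.
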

\begin{proof}
The proof is similar to the first part of the one given for 
 Theorem \ref{teo:slice_Almansi_several}. We proceed by induction on $m$. 
When $m=1$, Proposition 7 in \cite{CR_S_operators}  gives the equality
\be\label{eq:g1}
f(x)=(2\IM(x_1))^{-1}\left(\Gamma_{x_1}(x_1f(x))-\overline x_1\,\Gamma_{x_1}(f(x))\right)=\Gamma^1_{\{1\}}(f)(x)-\overline x_1\,\Gamma^1_\emptyset(f)(x)
\eeq
for every $x\in U\setminus\R_\bullet$. Assume that formula \eqref{eq:no_slice_Gamma_several} holds for an integer $m<n$. If $K\in\Pm(m)$, applying again \cite[Proposition~7]{CR_S_operators} we get
\begin{align}\label{eq:gammamK1}
\Gamma^m_K(f)&=(2\IM(x_{m+1}))^{-1}\left(\Gamma_{x_{m+1}}(x_{m+1}\Gamma^m_K(f))-\overline x_{m+1}\,\Gamma_{x_{m+1}}(\Gamma^m_K(f))\right)=\\\notag
&=\Gamma^{m+1}_{K\cup\{m+1\}}(f)-\overline x_{m+1}\,\Gamma^{m+1}_K(f)
\end{align}
on $U \setminus\R_\bullet$.
By the induction hypothesis and \eqref{eq:gammamK1},
\begin{align*}
f(x)&=
\sum_{K\in\Pm(m)}(-\overline{x})_{K^c}\Gamma^m_K(f)(x)=\\
&=
\sum_{K\in\Pm(m)}(-\overline{x})_{K^c}\Gamma^{m+1}_{K\cup\{m+1\}}(f)(x)-\sum_{K\in\Pm(m)}(-\overline{x})_{K^c}\overline x_{m+1}\,\Gamma^{m+1}_K(f)(x)=\\
&=
\sum_{H\in\Pm(m+1)}(-\overline{x})_{H^c}\Gamma^{m+1}_H(f)(x)
\end{align*}
for every $x\in U\setminus\R_\bullet$. 
\end{proof}

The previous theorem suggests a way to extend Definitions \ref{def:loc} and \ref{def:loc_several} to introduce the concepts of strongly and locally strongly slice function. We begin with the one-dimensional case. For $i,j\in\{1,2,3\}$, let  
$
\textstyle L_{ij}=x_{i}\dd{\ }{x_{j}}-x_{j}\dd{\ }{x_{i}}$ and $\Gamma_{x}=-iL_{23}+jL_{13}-kL_{12}.
$ 

\begin{definition}\label{def:loc_slice}
Let $  U \subseteq \hh$ be any open set and let $f\in\mathcal C^0(U,\hh)\cap\mathcal C^1(U\setminus\R,\hh )$. The function $f$ is called \emph{locally strongly slice}, and we write $f\in\SF^+_{loc}( U )$, if the functions $(\IM(x))^{-1}\Gamma_{x}f$ and $(\IM(x))^{-1}\Gamma_{x}(xf)$ 
 are locally constant on $\s_y\cap   U $ for every $y\in U\setminus\R $. 

If the functions $(\IM(x))^{-1}\Gamma_{x}f$ and $(\IM(x))^{-1}\Gamma_{x}(xf)$ are constant on $\s_y\cap   U $ for every $y\in U\setminus\R$, then $f$ is called \emph{strongly slice}, and we write $f\in\SF^+( U )$. 
\end{definition}

If $U=\OO_D$ is axially symmetric, then the two definitions of locally strongly slice and strongly slice function are equivalent and coincide with the usual one of slice function with some smoothness. 
Indeed, if $f\in\SF^0(\OO_D)\cap\mathcal C^1(\OO_D\setminus\R)$ is a slice function induced by a continuous stem function, then $f\in\mathcal C^0(\OO_D,\hh)$ (see \cite[Theorem 2.26]{SeveralA}), and it holds $\Gamma_{x}f=2\IM(x)\sd f$ and $\Gamma_{x}(xf)=2\IM(x)\sd{(xf)}$  on $\OO_D\setminus\R$ (see \cite[Proposition~9(a)]{CR_S_operators}), and therefore $\Gamma_{x}f$ and $\Gamma_{x}(xf)$ are constant on $\s_y$ for every $y\in \OO_D\setminus\R $. 
Conversely, if $f$ is strongly slice in $\OO_D$, then Theorem \ref{teo:no_slice_Almansi_several} gives $f=(2\IM(x))^{-1}\left(\Gamma_{x}(xf)-\overline x\,\Gamma_{x}(f)\right)$ in $\OO_D\setminus\R$. Since the functions  $(2\IM(x))^{-1}\Gamma_{x}(xf)$ and $(2\IM(x))^{-1}\Gamma_{x}(f)$ are circular slice functions in $\OO_D\setminus\R$, also $f$ is a slice function in $\OO_D\setminus\R$ and then, being continuous on $\OO_D$, it is a slice function in $\OO_D$ (see the following Remark \ref{rem:extR}). 

Observe that when $U\subseteq\hh$ is a slice domain, 
every function in $\SF^+_{loc}(U)$ is also a \emph{locally slice function} in the sense of \cite[Definition 3.6]{GS2020geometric}.

\begin{remark}\label{rem:PDE}
Since the operators $L_{ij}$ ($i,j=1,2,3$) generate the space of tangential differential operators to the 2-spheres $\s_x$, a function $f\in\mathcal C^0(U,\hh)\cap\mathcal C^2(U\setminus\R,\hh)$ is locally strongly slice if and only if it satisfies the system of second order partial differential equations
\begin{equation}\label{eq:system}
L_{ij}\big((\IM(x))^{-1}\Gamma_x f\big)=L_{ij}\big((\IM(x))^{-1}\Gamma_x (xf)\big)=0\quad\forall\, i,j\in\{1,2,3\}
\end{equation}
in $U\setminus\R$.
\end{remark}

\begin{remark}\label{rem:extR}
If $f\in\SF(\OO_D\setminus\R)\cap\mathcal C^0(\OO_D)$, then $f\in\SF(\OO_D)$. More precisely, $\SF(\OO_D\setminus\R)\cap\mathcal C^0(\OO_D)=\SF^0(\OO_D)$. Indeed, let $f=\I(F)=\I(F_1+iF_2)$, with $F:D\setminus\R\to \hh\otimes_\R\cc$. If $z=\alpha+i\beta\in D\setminus\R$ and $x=\alpha+I\beta\in\OO_D\setminus\R$, with $I\in\s$, then $\lim_{z\to\alpha}F_1(z)=\lim_{z\to\alpha}(f(x)+f(\overline x))/2=f(\alpha)$. Therefore
\[
\|F_2(z)\|=\|IF_2(z)\|=\|f(x)-F_1(z)\|\le\|f(x)-f(\alpha)\|+\|f(\alpha)-F_1(z)\|\to0
\]
as $z$ tends to $\alpha\in\R$. Setting $F_1(\alpha):=f(\alpha)$ and $F_2(\alpha):=0$, one obtains a continuous stem function on $D$ which induces $f$ on $\OO_D$.
This property extends to any number of variables thanks to formula (8) in \cite[Proposition 2.12]{SeveralA}. If $f=\I(F)\in\SF(\OO_D\setminus\R_\bullet)\cap\mathcal C^0(\OO_D)$, then for every $x=(\alpha_1+I_1\beta_1,\ldots,\alpha_n+I_n\beta_n)\in\OO_D\setminus\R_\bullet$ and every $K=\{k_1,\ldots,k_p\}\in\Pm(n)$,  it holds
\be\label{eq:sliceness}
F_K(z)=2^{-n}I_{k_p}^{-1}\cdots I_{k_1}^{-1}
\sum_{H\in\Pm(n)}(-1)^{|K\cap H|}f(x^{c,H}),
\eeq 
where $z=(\alpha_1+i\beta_1,\ldots,\alpha_n+i\beta_n)\in D$ and $x^{c,H}\in\OO_D\setminus\R_\bullet$ is obtained by $x$ conjugating all the components of indices $h_j\in H$. Formula \eqref{eq:sliceness} and the continuity of $f$ in $\OO_D$ shows that every $F_K$ extends continuously to $D$. The sliceness criterion of \cite[Corollary 2.16]{SeveralA} implies that the function $F=\sum_{K\in\Pm(n)}e_KF_K$ is a continuous slice function on $D$. Therefore 
$\SF(\OO_D\setminus\R_\bullet)\cap\mathcal C^0(\OO_D)=\SF^0(\OO_D)$.
\end{remark}

\begin{definition}\label{def:loc_slice_several}
Let $ U \subseteq \hh^n$, with $n>1$, be any open set and let $f\in\mathcal C^0(U,\hh)\cap\mathcal C^n( U\setminus\R_\bullet,\hh)$. 
The function $f$ is called \emph{locally strongly slice}, 
and we write $f\in\SF^+_{loc}( U )$, if for every $y\in U\setminus\R_\bullet $, every $m\in\{0,\ldots, n-1\}$ and every $K\in\Pm(m)$, the function $(\Gamma^{m}_K(f))^{(y)}$ belongs to $\SF^+_{loc}( U _{m+1}(y))$. 

If it holds that for every $y\in U\setminus\R_\bullet$, every $m\in\{0,\ldots, n-1\}$ and every $K\in\Pm(m)$, the function $(\Gamma^{m}_K(f))^{(y)}$ belongs to $\SF^+( U _{m+1}(y))$, then  $f$ is called \emph{strongly slice}. In particular, every function that is strongly slice in $U$ belongs to $\SF^+_{loc}( U )$. 
\end{definition}

In view of Definition \ref{def:loc_slice} and of formula \eqref{eq:Gamma_mk} defining $\Gamma^m_K(f)$, we can rephrase the previous definition in the following form. 

\begin{proposition}\label{pro:loc_slice_several}
Let $ U \subseteq \hh^n$, with $n>1$, be any open set and let $f\in\mathcal C^0(U,\hh)\cap\mathcal C^n( U\setminus\R_\bullet,\hh)$. The function $f$ is locally strongly 
slice in $U$ if and only if for every $y\in U\setminus\R_\bullet$, 
and for every $m\in\{1,\ldots, n\}$ and $K\in\Pm(m)$, the functions $\Gamma^{m}_{K}(f)$ are locally constant on $(\s_{y_1}\times\cdots\times\s_{y_{m}}\times\{y_{m+1}\}\times\cdots\times\{y_n\})\cap U $.

The function $f$ is strongly slice if and only if the functions $\Gamma^{m}_K(f)$ are constant on the set $(\s_{y_1}\times\cdots\times\s_{y_{m}}\times\{y_{m+1}\}\times\cdots\times\{y_n\})\cap U $. \qed
\end{proposition}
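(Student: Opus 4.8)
The plan is to unfold Definition \ref{def:loc_slice_several} one variable at a time by applying the one-dimensional Definition \ref{def:loc_slice} to the restrictions $(\Gamma^m_K(f))^{(y)}$, identifying the two one-variable test functions through the recursion \eqref{eq:Gamma_mk}, and finally upgrading the resulting \emph{separate}-variable conditions to the \emph{joint} statement over a product of spheres. Only this last upgrade carries any content; everything else is a direct translation.

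First I would fix $m\in\{0,\ldots,n-1\}$, $K\in\Pm(m)$ and $y\in U\setminus\R_\bullet$, and apply Definition \ref{def:loc_slice} to the one-variable function $g:=(\Gamma^m_K(f))^{(y)}$ of $x_{m+1}\in U_{m+1}(y)$. Since $\Gamma_{x_{m+1}}$ acts only on the variable $x_{m+1}$ and $K\subseteq\{1,\ldots,m\}$ (so $m+1\notin K$), the recursion \eqref{eq:Gamma_mk} gives
\[
(\IM(x_{m+1}))^{-1}\Gamma_{x_{m+1}}(g)=2\,(\Gamma^{m+1}_K(f))^{(y)},\qquad
(\IM(x_{m+1}))^{-1}\Gamma_{x_{m+1}}(x_{m+1}g)=2\,(\Gamma^{m+1}_{K\cup\{m+1\}}(f))^{(y)}.
\]
Hence, by Definition \ref{def:loc_slice}, the membership $g\in\SF^+_{loc}(U_{m+1}(y))$ (resp. $g\in\SF^+(U_{m+1}(y))$) is \emph{exactly} the requirement that both $(\Gamma^{m+1}_K(f))^{(y)}$ and $(\Gamma^{m+1}_{K\cup\{m+1\}}(f))^{(y)}$ be locally constant (resp. constant) on $\s_{x_{m+1}}\cap U_{m+1}(y)$, that is, constant along the sphere of the single variable $x_{m+1}$ with the remaining variables frozen at $y$. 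As $K$ runs through $\Pm(m)$, the pair $\{K,K\cup\{m+1\}\}$ runs exactly once through $\Pm(m+1)$; writing $m$ for $m+1$, Definition \ref{def:loc_slice_several} is therefore equivalent to: for every $y$, every $m\in\{1,\ldots,n\}$ and every $K\in\Pm(m)$, the function $\Gamma^m_K(f)$ is locally constant (resp. constant) in $x_m$ along $\s_{x_m}$, the other variables being frozen.

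It remains to promote this separate-variable constancy to joint constancy on $(\s_{y_1}\times\cdots\times\s_{y_m}\times\{y_{m+1}\}\times\cdots\times\{y_n\})\cap U$, which I expect to be the main obstacle. I would argue by induction on $m$: the case $m=1$ is tautological, and in the inductive step I would use that, by \eqref{eq:Gamma_mk}, $\Gamma^m_K(f)$ arises from $\Gamma^{m-1}_{K\setminus\{m\}}(f)$ through an operator involving only the variable $x_m$. Such an operator commutes with the local one-parameter flows $\rho_t$ that rotate $x_1,\ldots,x_{m-1}$ inside their spheres; hence the inductive hypothesis $(\Gamma^{m-1}_{K\setminus\{m\}}(f))\circ\rho_t=\Gamma^{m-1}_{K\setminus\{m\}}(f)$ (joint constancy in the first $m-1$ spheres) transfers to $(\Gamma^m_K(f))\circ\rho_t=\Gamma^m_K(f)$. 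Combined with the freshly obtained constancy in $x_m$, and the fact that these commuting flows together act leafwise transitively on the product of the first $m$ spheres, this yields the desired joint constancy. The delicate point is differentiability: at the top level $\Gamma^n_K(f)$ is only of class $\mathcal C^0$, so the clean characterization of leafwise local constancy by the vanishing of the tangential operators $L^j_{pq}$ (in the spirit of Remark \ref{rem:PDE}) is unavailable; phrasing the commutation through the flows $\rho_t$ circumvents this, since it differentiates only $\Gamma^{m-1}_{K\setminus\{m\}}(f)\in\mathcal C^1$. The converse implications are immediate --- joint constancy trivially implies separate constancy, and Step 1 reverses --- while the strongly slice case follows by reading ``constant'' for ``locally constant'' throughout.
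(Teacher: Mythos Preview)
Your unfolding of the definitions via the recursion \eqref{eq:Gamma_mk} is exactly what the paper has in mind: the proposition is stated as a direct rephrasing of Definition \ref{def:loc_slice_several} in view of Definition \ref{def:loc_slice}, and the paper gives no proof beyond the \qed. In that sense your approach is the same as the paper's.

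Where you go further is in isolating the passage from \emph{separate}-variable constancy (which is literally what unfolds from the definitions: $\Gamma^m_K(f)$ is locally constant in $x_m$ along $\s_{y_m}$, other variables frozen) to \emph{joint} constancy on $\s_{y_1}\times\cdots\times\s_{y_m}$. The paper does not mention this step. Your inductive argument---that the operator $(2\IM(x_m))^{-1}\Gamma_{x_m}(x_m^{\epsilon}\,\cdot)$ acts only in $x_m$ and hence commutes with rotations in $x_1,\ldots,x_{m-1}$, so that joint constancy of $\Gamma^{m-1}_{K\setminus\{m\}}(f)$ in the first $m-1$ spheres is inherited by $\Gamma^m_K(f)$---is correct, and your choice to phrase the commutation through flows rather than through $L^j_{pq}$ is well taken, since at the top level $m=n$ one has only $\Gamma^{n-1}_{K'}(f)\in\mathcal C^1$ and $\Gamma^n_K(f)\in\mathcal C^0$. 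So your proposal is not a different route but a strictly more detailed version of the paper's, filling a small gap that the paper treats as obvious.
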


\begin{proposition}
Let $ U \subseteq\hh^n$ be open. 
If $f$ is strongly slice-regular in $U$, then  $f$ is strongly slice in $U$, i.e.\ $\sr(U)\subseteq\SF^+(U)$. Moreover, $\sr_{loc}(U)\subseteq\SF^+_{loc}(U)$. 
\end{proposition}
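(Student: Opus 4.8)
The plan is to reduce both inclusions to the axially symmetric situation, where slice-regularity forces sliceness and where the operators $\Gamma^m_K$ coincide with the iterated spherical derivatives $\SP^m_K$; the extendibility theorems of Section~\ref{sec:Locally_slice-regular} are exactly the tool that transports a (locally) strongly slice-regular function to that setting. Throughout I would use the characterization of strong sliceness provided by Proposition~\ref{pro:loc_slice_several}: $f\in\SF^+(U)$ (resp.\ $f\in\SF^+_{loc}(U)$) precisely when each $\Gamma^m_K(f)$ is constant (resp.\ locally constant) on the partial spheres $(\s_{y_1}\times\cdots\times\s_{y_m}\times\{y_{m+1}\}\times\cdots\times\{y_n\})\cap U$ for $y\in U\setminus\R_\bullet$.

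First I would prove the global inclusion $\sr(U)\subseteq\SF^+(U)$. Let $f\in\sr(U)$. By Theorem~\ref{teo:extension}, $f$ extends to the unique slice-regular function $\widetilde f\in\sr(\widetilde U)$ on the axially symmetric completion $\widetilde U$. Being slice-regular on an axially symmetric open set, $\widetilde f$ is a real-analytic slice function, so $\widetilde f\in\SF(\widetilde U)\cap\mathcal C^n(\widetilde U,\hh)$ and Proposition~\ref{pro:slice_Gamma_several} applies: $\Gamma^m_K(\widetilde f)=\SP^m_K(\widetilde f)$ on $\widetilde U\setminus\R_m$ for every $m$ and $K$. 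By Proposition~\ref{pro:SP_slice}, each $\SP^m_K(\widetilde f)$ is constant on every set $(\s_{y_1}\times\cdots\times\s_{y_m}\times\{y_{m+1}\}\times\cdots\times\{y_n\})\cap\widetilde U$. Since $\Gamma^m_K$ is a local differential operator and $\widetilde f=f$ on the open set $U$, one has $\Gamma^m_K(f)=\Gamma^m_K(\widetilde f)$ on $U\setminus\R_m$; intersecting the previous partial spheres with $U\subseteq\widetilde U$, the functions $\Gamma^m_K(f)$ are therefore constant on $(\s_{y_1}\times\cdots\times\s_{y_m}\times\{y_{m+1}\}\times\cdots\times\{y_n\})\cap U$ for every $y\in U\setminus\R_\bullet$. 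By Proposition~\ref{pro:loc_slice_several} this says exactly that $f\in\SF^+(U)$.

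For the local inclusion $\sr_{loc}(U)\subseteq\SF^+_{loc}(U)$ I would localize the previous argument. Let $f\in\sr_{loc}(U)$. By Theorem~\ref{teo:localext}, every $y\in U$ has a neighbourhood $U_y\subseteq U$ on which $f_{|U_y}$ is strongly slice-regular, hence, by the global inclusion just proved, $f_{|U_y}\in\SF^+(U_y)$; in particular each $\Gamma^m_K(f)$ is constant on every partial sphere contained in $U_y$. Given a point $p\in U\setminus\R_\bullet$ lying on a partial sphere $S=(\s_{p_1}\times\cdots\times\s_{p_m}\times\{p_{m+1}\}\times\cdots\times\{p_n\})\cap U$, the set $S\cap U_p$ is an open neighbourhood of $p$ in $S$ on which $\Gamma^m_K(f)$ is constant. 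Thus $\Gamma^m_K(f)$ is locally constant on every such $S$, which by Proposition~\ref{pro:loc_slice_several} means precisely $f\in\SF^+_{loc}(U)$.

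I expect the main (rather mild) obstacle to be conceptual rather than computational: one must resist comparing the operators $\A^m_K$, which govern slice-regularity, with the operators $\Gamma^m_K$, which govern sliceness, \emph{directly} on the non-symmetric set $U$, where they genuinely differ, and instead transport the whole problem to $\widetilde U$, where slice-regularity, sliceness, and the identity $\Gamma^m_K=\SP^m_K$ are simultaneously available. The remaining verifications are immediate: that $\Gamma^m_K$ commutes with restriction to the open set $U$ (locality of differential operators), that the relevant partial spheres lie in $U\setminus\R_m$ whenever $y\in U\setminus\R_\bullet$, and that constancy on $(\cdots)\cap\widetilde U$ restricts to constancy on $(\cdots)\cap U$.
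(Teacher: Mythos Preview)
Your proof is correct and follows essentially the same route as the paper's: extend $f$ to $\widetilde f\in\sr(\widetilde U)$ via Theorem~\ref{teo:extension}, use that on the axially symmetric set $\widetilde U$ one has $\Gamma^m_K(\widetilde f)=\SP^m_K(\widetilde f)$ (hence constancy on the partial spheres by Proposition~\ref{pro:SP_slice}), restrict back to $U$ by locality of $\Gamma^m_K$, and invoke Proposition~\ref{pro:loc_slice_several}; the local inclusion is then handled via Theorem~\ref{teo:localext}. Your write-up is more detailed than the paper's, but the strategy and the ingredients are the same.
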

\begin{proof}
If $f\in\sr(U)$, then $f\in\mathcal C^n(U,\hh)$. Let $\widetilde f$ be the extension given by Theorem \ref{teo:extension}. Since $\widetilde f$ is a slice function, then it holds $\Gamma^m_K(\widetilde f)=\SP^m_K(\widetilde f)$ for every $m$ and $K$ (Proposition~\ref{pro:1varChar_slice} and \cite[Proposition~9(a)]{CR_S_operators}). 
Therefore for every $y\in U\setminus\R_\bullet$ the function $\Gamma^m_K(f)=\Gamma^m_K(\widetilde f)_{|U}$ is constant on the set $(\s_{y_1}\times\cdots\times\s_{y_{m}}\times\{y_{m+1}\}\times\cdots\times\{y_n\})\cap U $, i.e., $f\in\SF^+(U)$.  
If $f\in\sr_{loc}(U)$, the result follows in the same way using the local extendibility theorem (Theorem \ref{teo:localext}). 
\end{proof}

If $f$ is a slice function on an axially symmetric open set $U=\OO_D$, then it holds $\Gamma^m_K(f)=\SP^m_K(f)$ for every $m$ and $K$. 
Therefore $f$ is strongly slice in $\OO_D$. The converse is a consequence of the following theorem. 

\begin{theorem}[Extendibility of strongly slice functions]\label{teo:slice_extension}
Let $ U \subseteq\hh^n$ be open and let $f\in\mathcal C^0(U,\hh)\cap\mathcal C^n( U\setminus\R_\bullet,\hh)$. Then it holds:
\begin{itemize}
\item[(i)] The function $f$ is strongly slice in $ U $ if and only if it can be extended to a continuous slice function on the symmetric completion $\widetilde U$, of class $\mathcal C^n$ on $\widetilde  U\setminus\R_\bullet$. If this is the case, the extension is unique.
\item[(ii)] The function $f$ is locally strongly slice in $ U $ if and only if  for each $y\in  U $ there exists an open neighborhood $ U _y\subseteq U$ of $y$ and a continuous slice function $\widetilde f^{(y)}$ on $\widetilde{ U _y}$, of class $\mathcal C^n$ on $\widetilde{U_y}\setminus\R_\bullet$, that extends the restriction $f_{| U _y}$.
\end{itemize}
\end{theorem}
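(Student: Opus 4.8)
The plan is to mimic the proof of Theorem \ref{teo:extension}, replacing slice-regularity by sliceness throughout: the Cauchy--Riemann--Fueter components $\A^m_K(f)$ are replaced by the spherical components $\Gamma^m_K(f)$, and the regularity-specific ingredients by Theorem \ref{teo:no_slice_Almansi_several}, Proposition \ref{pro:loc_slice_several} and Remark \ref{rem:extR}. I first treat (i). For the forward implication, let $f$ be strongly slice in $U$. Theorem \ref{teo:no_slice_Almansi_several} gives, for every $m$, the zonal decomposition $f(x)=\sum_{K\in\Pm(m)}(-\overline x)_{K^c}\Gamma^m_K(f)(x)$ on $U\setminus\R_\bullet$, with the successive levels linked by the recursion \eqref{eq:gammamK1}. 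By Proposition \ref{pro:loc_slice_several}, strong sliceness means precisely that each $\Gamma^m_K(f)$ is constant on the fibres $(\s_{y_1}\times\cdots\times\s_{y_m}\times\{y_{m+1}\}\times\cdots\times\{y_n\})\cap U$; being continuous and fibrewise constant, it extends continuously along these fibres, and at the top level $m=n$ one obtains circular slice functions $\widetilde\Gamma^n_K(f)$ on $\widetilde U\setminus\R_\bullet$. Consequently
\[
\widetilde f(x):=\sum_{K\in\Pm(n)}(-\overline x)_{K^c}\,\widetilde\Gamma^n_K(f)(x)
\]
is a slice function on $\widetilde U\setminus\R_\bullet$, of class $\mathcal C^n$ there, that coincides with $f$ on $U\setminus\R_\bullet$.

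The main obstacle is extending $\widetilde f$ continuously across the real locus $\R_\bullet$. In the slice-regular case of Theorem \ref{teo:extension} the components $\A^m_K(f)$ are of class $\mathcal C^{n-m}$ up to $\R_\bullet$, so $\widetilde f$ inherits continuity automatically; here the factors $(2\IM(x_m))^{-1}$ render each $\Gamma^m_K(f)$ singular along $\R_\bullet$, and continuity of $\widetilde f$ there cannot be read off the components. This is exactly where the hypothesis $f\in\mathcal C^0(U)$ is used: since $\widetilde f$ is a slice function on $\widetilde U\setminus\R_\bullet$ agreeing with the continuous $f$ on $U$, and $U$ meets every sphere $\s_y\subseteq\widetilde U$, the components of the inducing stem function are recovered from the values of $f$ through formula \eqref{eq:sliceness}; these extend continuously up to $\R_\bullet$, and the sliceness criterion of Remark \ref{rem:extR} (which holds in any number of variables) shows that $\widetilde f$ extends to a continuous slice function on all of $\widetilde U$, still of class $\mathcal C^n$ on $\widetilde U\setminus\R_\bullet$. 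I expect the careful propagation of continuity from $U$ to $\widetilde U$ --- equivalently, the inductive reduction to the one-variable extension theorem of \cite{CR_S_operators} obtained by peeling off $x_1,\ldots,x_n$ in turn --- to be the delicate point.

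For the converse in (i), if such an extension $\widetilde f\in\SF(\widetilde U)\cap\mathcal C^n(\widetilde U\setminus\R_\bullet,\hh)$ exists, then Proposition \ref{pro:slice_Gamma_several} gives $\Gamma^m_K(\widetilde f)=\SP^m_K(\widetilde f)$, slice functions that are constant on the fibres $\s_{y_1}\times\cdots\times\s_{y_m}\times\{y_{m+1}\}\times\cdots\times\{y_n\}$ by Proposition \ref{pro:SP_slice}; since $\Gamma^m_K$ is a differential operator, $\Gamma^m_K(f)=\Gamma^m_K(\widetilde f)_{|U}$ is constant on these fibres intersected with $U$, so $f$ is strongly slice by Proposition \ref{pro:loc_slice_several}. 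Uniqueness follows because the difference $g$ of two such extensions is a continuous slice function on $\widetilde U$ vanishing on $U$: then $\Gamma^n_K(g)=0$ on $U\setminus\R_\bullet$, and being circular (constant on each sphere $\s_y\subseteq\widetilde U$, every one of which meets $U\setminus\R_\bullet$) it vanishes throughout $\widetilde U\setminus\R_\bullet$, whence $g\equiv0$ by the decomposition \eqref{eq:no_slice_Gamma_several} and continuity.

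Part (ii) is then obtained by localization, exactly as Theorem \ref{teo:localext} is deduced from Theorem \ref{teo:extension}. If $f\in\SF^+_{loc}(U)$, the functions $\Gamma^m_K(f)$ are only \emph{locally} constant on the fibres, so around each $y\in U$ one chooses a neighbourhood $U_y\subseteq U$ small enough that the relevant fibre-intersections with $U_y$ are connected; there they are constant, i.e.\ $f_{|U_y}\in\SF^+(U_y)$, and part (i) provides the continuous slice extension $\widetilde f^{(y)}$ on $\widetilde{U_y}$. Conversely, the existence of such local extensions forces $f_{|U_y}$ to be strongly slice on each $U_y$ by the converse in (i), hence $f$ is locally strongly slice on $U$.
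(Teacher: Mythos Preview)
Your outline matches the paper's proof: extend each $\Gamma^n_K(f)$ to a circular slice function $\widetilde{\Gamma^n_K(f)}$ on $\widetilde U\setminus\R_\bullet$ by constancy on spheres, define $\widetilde f$ via the decomposition \eqref{eq:no_slice_Gamma_several}, then verify sliceness, $\mathcal C^n$-smoothness, and continuity; your converse (via $\Gamma^m_K(\widetilde f)=\SP^m_K(\widetilde f)$ and Propositions \ref{pro:SP_slice}, \ref{pro:loc_slice_several}), your uniqueness argument, and your localization for (ii) are the same as in the paper.

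Two places where the paper is more concrete than your sketch. First, $\mathcal C^n$-regularity of $\widetilde f$ on $\widetilde U\setminus\R_\bullet$ is not just asserted but obtained through an explicit rotation formula: for $y\in\widetilde U\setminus\R_\bullet$ one picks $y'\in\s_y\cap U$ and a rotation $T$ of $\hh^n$ around $\R^n$ sending $y$ to $y'$, and writes $\widetilde f(x)=\sum_K(-\overline x)_{K^c}\,\Gamma^n_K(f)(T(x))$ on $T^{-1}(B_{y'}(r))$, which is manifestly $\mathcal C^n$. Second, and this is the ``delicate point'' you flag, the paper does \emph{not} go through the stem formula \eqref{eq:sliceness} or through an inductive peeling of variables. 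It instead uses the direct observation $\widetilde U\cap\R_\bullet=U\cap\R_\bullet$: one simply sets $\widetilde f:=f$ on that set, and near any $y\in\widetilde U\cap\R_\bullet$ a ball $B_y(r)\subset U$ gives $\widetilde f=f\in\mathcal C^0$ there; once $\widetilde f\in\mathcal C^0(\widetilde U)$ is established, Remark \ref{rem:extR} upgrades sliceness from $\widetilde U\setminus\R_\bullet$ to $\widetilde U$. Your proposed route via \eqref{eq:sliceness} would need further work, since the $2^n$ evaluation points $x^{c,H}$ in that formula need not all lie in $U$, so continuity of the $F_K$ cannot be read off continuity of $f$ on $U$ alone.
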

\begin{proof}
Let us prove point (i). Assume $f$ strongly slice in $U$. 
For every $K\in\Pm(n)$ and every $y\in U\setminus\R_\bullet$, the functions $\Gamma^{n}_K(f)$ are constant on the set $\s_y\cap U $. Let $\widetilde{\Gamma^{n}_K(f)}$ denote the extension of $\Gamma^{n}_K(f)$ to the whole $\s_y$ obtained by imposing constancy on the sphere and define 
\begin{equation}
\widetilde f(x)=\sum_{K\in\Pm(n)}(-\overline{x})_{K^c}\widetilde{\Gamma^n_K(f)}(x)
\end{equation}
for any $x\in\widetilde U\setminus\R_\bullet$. Since $\widetilde U\cap\R_\bullet=U\cap\R_\bullet$, we can set $\widetilde f(x)=f(x)$ for every $x\in \widetilde U\cap\R_\bullet$. 
Theorem \ref{teo:no_slice_Almansi_several} implies that $\widetilde f=f$ on $U$. 
Since the functions  $\widetilde{\Gamma^{n}_K(f)}$ are circular slice functions in $\widetilde U\setminus\R_\bullet$, also $\widetilde f$ is a slice function in $\widetilde U\setminus\R_\bullet$. 

We show that $\widetilde f\in\mathcal C^n(\widetilde U\setminus\R_\bullet)$. 
If $y\in\widetilde U\setminus\R_\bullet$, let $y'\in\s_y\cap U$ and let $T$ be a rotation of $\hh^n$ around $\R^n$ sending $y$ to $y'$. Let $B_{y'}(r)\subset U\setminus\R_\bullet$ be an open ball. Then $T^{-1}(B_{y'}(r))\subset \widetilde U\setminus\R_\bullet$ is an open neighborhood of $T^{-1}(y')=y$  and it holds
\begin{align}\label{eq:ftildebis}
\widetilde f(x)&=\widetilde f(T^{-1}(x'))=\sum_{K\in\Pm(n)}(-\overline{x})_{K^c}\widetilde{\Gamma^n_K(f)}(T^{-1}(x'))=
\sum_{K\in\Pm(n)}(-\overline{x})_{K^c}\Gamma^n_K(f)(x')=\notag\\
&=
\sum_{K\in\Pm(n)}(-\overline{x})_{K^c}\Gamma^n_K(f)(T(x))
\end{align}
for every $x=T^{-1}(x')$ with $x'\in B_{y'}(r)$.  Therefore $\widetilde f\in\mathcal C^n(T^{-1}(B_{y'}(r)))$.  

If $y\in\widetilde U\cap\R_\bullet$, then there exists an open ball $B_y(r)\subset U$ where $\widetilde f$ coincides with $f$, and then $\widetilde f\in\mathcal C^0(B_y(r))$. Therefore $\widetilde f\in\mathcal C^0(\widetilde U)$, and Remark \ref{rem:extR} implies that $f$ is a slice function in $\widetilde U$. 

Conversely, if $f\in\mathcal C^n(U\setminus\R_\bullet,\hh)$ has a continuous extension $\widetilde f\in\SF(\widetilde U)$ of class $\mathcal C^n$ in $\widetilde U\setminus\R_\bullet$, then  for every $y\in\widetilde U$ the functions $\Gamma^{m}_K(\widetilde f)=\SP^m_K(\widetilde f)$ are constant on the set $(\s_{y_1}\times\cdots\times\s_{y_{m}}\times\{y_{m+1}\}\times\cdots\times\{y_n\})\cap \widetilde U $ (Proposition \ref{pro:SP_slice}).  In view of Proposition \ref{pro:loc_slice_several}, $f$ is strongly slice in $U$.

The uniqueness 
of the extension $\widetilde f$ follows for example from formula \eqref{eq:ftildebis}, that holds locally on $\widetilde U\setminus\R_\bullet$, and from the fact that $\widetilde f(x)=f(x)$ for every $x\in \widetilde U\cap\R_\bullet=U\cap\R_\bullet$. It can be deduced also from the following proposition applied to the extension $\widetilde f$. 

Point (ii) of the statement is an immediate consequence of (i).
\end{proof}

The representation formulas  \cite[Proposition 2.12]{SeveralA} for slice, not necessarily regular, functions, easily implies an identity principle.

\begin{proposition}\label{pro:vanishing}
Let $\OO\subseteq\hh^n$ be an axially symmetric open set and let $U\subseteq\OO$ be an open subset. If $f\in\SF(\OO)\cap\mathcal C^0(\OO,\hh)$ vanishes on $U$, then $f\equiv0$ on $\widetilde U$.
\end{proposition}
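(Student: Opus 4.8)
The plan is to reduce the statement to the vanishing of the components $F_K$ of the inducing stem function and to exploit that $f$ vanishes not at isolated points but on an entire relatively open subset of each sphere $\s_x$ that meets $U$. First I would record two reductions. Since $\OO$ is axially symmetric and $U\subseteq\OO$, the symmetric completion satisfies $\widetilde U=\bigcup_{x\in U}\s_x\subseteq\OO$, so $f$ is defined and continuous on all of $\widetilde U$. Moreover $\R_\bullet$ has empty interior, so $\widetilde U\setminus\R_\bullet$ is open and dense in $\widetilde U$; hence by continuity of $f$ it suffices to prove $f\equiv0$ on $\widetilde U\setminus\R_\bullet$.

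Next I would fix $y\in\widetilde U\setminus\R_\bullet$. By definition of $\widetilde U$ there is $x\in U$ with $\s_y=\s_x$, and since $y\notin\R_\bullet$ every point of this common sphere has all components non-real, so $x\notin\R_\bullet$ as well. As $U$ is open in $\hh^n$, the set $U\cap\s_x$ is a nonempty relatively open subset of $\s_x$ on which $f$ vanishes. Writing $x=(\alpha_1+J_1\beta_1,\dots,\alpha_n+J_n\beta_n)$ with $\beta_h>0$ and $z=(\alpha_1+i\beta_1,\dots,\alpha_n+i\beta_n)\in D$, every point of $\s_x$ has the form $(\alpha_1+J_1'\beta_1,\dots,\alpha_n+J_n'\beta_n)$ with $(J_1',\dots,J_n')\in\s^n$, and there $f$ equals $\sum_{K\in\Pm(n)}J'_KF_K(z)$. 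Since $(J_1',\dots,J_n')\mapsto(\alpha_h+J_h'\beta_h)_h$ is a homeomorphism $\s^n\to\s_x$, the vanishing of $f$ on $U\cap\s_x$ reads $\sum_{K\in\Pm(n)}J'_KF_K(z)=0$ for all $(J_1',\dots,J_n')$ in a nonempty open set $W\subseteq\s^n$.

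The heart of the matter is then the following identity principle on the product of spheres: if $\sum_{K\in\Pm(n)}J'_KF_K(z)=0$ for $(J_1',\dots,J_n')$ in a nonempty relatively open $W\subseteq\s^n$, then $F_K(z)=0$ for every $K$. I would prove it by induction on $n$. Each product $J'_K=J'_{k_1}\cdots J'_{k_p}$ is affine in every variable separately (each index occurs at most once), so the whole sum is a multi-affine $\hh$-valued function of $(J_1',\dots,J_n')\in(\R^3)^n$. In the base case $n=1$ the sum is $F_\emptyset(z)+J_1'F_{\{1\}}(z)$; splitting into the four real components of $\hh$ and using that a real affine function on $\R^3$ whose restriction to $\s^2$ vanishes on a nonempty open set must vanish identically (its surface gradient, the tangential projection of the linear part, would have to vanish on that open set, forcing the linear part, and then the constant, to be zero), one gets $iF_{\{1\}}(z)=jF_{\{1\}}(z)=kF_{\{1\}}(z)=0$, hence $F_{\{1\}}(z)=0$ and $F_\emptyset(z)=0$. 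For the inductive step I would shrink $W$ to a product $W_1\times\cdots\times W_n$, separate the terms with $1\in K$ from those with $1\notin K$ to write the sum as $A(J_2',\dots,J_n')+J_1'\,B(J_2',\dots,J_n')$, apply the base case in the variable $J_1'$ to obtain $A\equiv B\equiv0$ on $W_2\times\cdots\times W_n$, and then invoke the inductive hypothesis on the two multi-affine expressions $A$ and $B$ to kill all the $F_K(z)$.

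Granting this lemma, $f$ vanishes on the whole sphere $\s_x=\s_y$, in particular $f(y)=0$; as $y\in\widetilde U\setminus\R_\bullet$ was arbitrary, $f\equiv0$ on $\widetilde U\setminus\R_\bullet$, and continuity finishes the proof on $\widetilde U$. The main obstacle is precisely the multi-affine identity principle: one must exclude that the noncommutative products $J'_K$ conspire to make the sum vanish on an open subset of $\s^n$ without each $F_K(z)$ being zero. That the $2^n$ coefficient functions $(J_1',\dots,J_n')\mapsto J'_K$ are $\hh$-linearly independent is exactly the content of the inversion formula \eqref{eq:sliceness} of Remark~\ref{rem:extR} (which recovers each $F_K(z)$ from the values of $f$ at the $2^n$ sign-conjugate corners of $\s_x$); the open-set hypothesis, via the real-analyticity of these polynomial coefficients, upgrades this independence to the vanishing conclusion over an arbitrary nonempty open subset, which is what the induction makes precise. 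This is in the spirit of \cite[Proposition~2.12]{SeveralA}, the difference being that the hypothesis gives an open piece of each sphere rather than its distinguished corners.
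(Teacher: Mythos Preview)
Your proof is correct and takes a genuinely different route from the paper's. The paper argues by induction on the number of variables through the one-variable slice structure and the Almansi machinery: for $n=1$ it invokes the representation formula $f(x)=(I-K)(J-K)^{-1}f(x_J)-(I-J)(J-K)^{-1}f(x_K)$ with two distinct points $x_J,x_K\in\s_x\cap U$; for $n>1$ it uses that $f$ is slice in $x_1$ to propagate vanishing to $\widetilde U_1$, then passes to the spherical derivatives $\SP^1_\emptyset(f),\SP^1_{\{1\}}(f)$ (which are slice in $x_2$) and iterates through $\widetilde U_1\subseteq\widetilde U_2\subseteq\cdots\subseteq\widetilde U_n=\widetilde U$ via Theorem~\ref{teo:slice_Almansi_several}. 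Your argument instead freezes a single base point $z\in D$ and proves directly that all stem components $F_K(z)$ vanish, using a multi-affine identity principle on $\s^n$: the map $(J_1',\dots,J_n')\mapsto\sum_K J_K'F_K(z)$ is degree one in each variable, and vanishing on a nonempty open box in $\s^n$ forces all coefficients to be zero. Your approach is more self-contained (it does not rely on the $\SP^m_K$ hierarchy or the slice Almansi decomposition) and closer in spirit to formula~\eqref{eq:sliceness}, while the paper's approach illustrates how the one-variable characterization and the decomposition results of the paper feed back into basic identity principles. One minor remark: your base case is fine but the phrasing ``one gets $iF_{\{1\}}(z)=jF_{\{1\}}(z)=kF_{\{1\}}(z)=0$'' is slightly imprecise---what the affine argument literally yields is the vanishing of the linear coefficient vector in each real component, from which $F_{\{1\}}(z)=0$ follows; the two-point argument ($(J'-J'')F_{\{1\}}(z)=0$ with $J'\ne J''$) is cleaner here and is exactly what the paper uses for $n=1$.
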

\begin{proof}
The one-dimensional case follows easily from the representation formula for slice function \cite[Proposition 6]{AIM2011}. Let $\OO\subseteq\hh$. If $x\in \widetilde U\cap\R=U\cap\R$, then $f(x)=0$. 
If $x=\alpha+I\beta\in\widetilde U\setminus\R\subseteq\OO\setminus\R$ and $\{x_J=\alpha+\beta J, x_K=\alpha+\beta K\}\subset\s_x\cap U$, with $x_J\ne x_K$, then $f(x_J)=f(x_K)=0$, and therefore $f(x)=(I-K)(J-K)^{-1}f(x_J)-(I-J)(J-K)^{-1}f(x_K)=0$. 

If $\OO\subseteq\hh^n$, with $n>1$ and $f\in\SF(\OO)$, $f$ is a slice function w.r.t.\ $x_1$ in $\OO_1(y)$  for every $y\in\OO$, and then the previous argument yields $f\equiv0$ in $\widetilde{U_1(y)}$, where
\[
U_{1}(y):=\{a\in \hh:(a,y_{2},\ldots,y_n)\in U\}.
\]
Therefore $\SP^1_\emptyset(f)=(f)'_{s,x_1}$ and $\SP^1_{\{1\}}(f)=(x_1f)'_{s,x_1}$ vanish identically in $\widetilde U_1\setminus\R_1$, where 
\[
\widetilde U _{1}:=\bigcup_{y\in U}\widetilde{U_1(y)}=\bigcup_{y\in U }\s_{y_1}\times\{y_{2}\}\times\cdots\times\{y_n\}.
\]
Since $f=\SP^1_{\{1\}}(f)-\overline x_1\,\SP^1_\emptyset(f)$ in $\widetilde U_1\setminus\R_1$ and $\widetilde U_1\cap\R_1=U\cap\R_1$, it holds $f\equiv0$ in $\widetilde U_1$. Repeating the same argument on the functions $\SP^1_\emptyset(f)$ and $\SP^1_{\{1\}}(f)$, that are slice functions w.r.t.\ $x_2$, one obtains that $\SP^2_K(f)\equiv0$ on $\widetilde U_2\setminus\R_2$ for every $K\in\Pm(2)$, where
\[
\widetilde U_{2}:=\bigcup_{y\in U }\s_{y_1}\times\s_{y_{2}}\times\{y_3\}\times\cdots\times\{y_n\}.
\]
Theorem \ref{teo:slice_Almansi_several} and continuity gives $f\equiv0$ in $\widetilde U_2$. Proceeding inductively, we get that $f\equiv0$ on every set 
\[
\widetilde U _{m}:=\bigcup_{y\in U }\s_{y_1}\times\cdots\times\s_{y_{m}}\times\{y_{m+1}\}\times\cdots\times\{y_n\}.
\]
Since $ U \subseteq\widetilde U _1\subseteq\cdots\subseteq\widetilde U _n=\widetilde U $, after $n$ steps we get the statement.
\end{proof}

We are now in the position to extend Theorem \ref{teo:Wirtinger_regular} to non axially symmetric sets in $\hh^n$. In particular, we get the converse of Corollary \ref{cor:loc_Wirtinger_regular}.

\begin{theorem}\label{teo:Wirtinger_regular_local}
Let $U\subseteq\hh^n$ be an open set and $f\in\mathcal C^0(U,\hh)\cap\mathcal C^n( U\setminus\R_\bullet,\hh)$. Then it holds:
\begin{itemize}
\item[(i)] If $f$ is strongly slice in $U$, then $f\in\sr(U)$ if and only if $\thb m(f)=0$ for every $m\in\{1,\ldots,n\}$.
\item[(ii)]
If $f$ is locally strongly slice in $U$, then $f\in\sr_{loc}(U)$ if and only if $\thb m(f)=0$ for every $m\in\{1,\ldots,n\}$.
\end{itemize}
\end{theorem}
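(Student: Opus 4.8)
The plan is to reduce both statements to the axially symmetric situation already settled in Theorem~\ref{teo:Wirtinger_regular}, exploiting that the operators $\thb m$ are genuine (local) differential operators and that strongly slice and strongly slice-regular functions possess canonical extensions to the symmetric completion $\widetilde U$. Throughout I would use freely that, since $\thb m$ is a differential operator, its value on any open set depends only on the germ of the function there; in particular restriction commutes with $\thb m$.

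I would first prove (i). If $f$ is strongly slice in $U$, Theorem~\ref{teo:slice_extension}(i) yields a unique continuous slice function $\widetilde f$ on $\widetilde U$, of class $\mathcal C^n$ on $\widetilde U\setminus\R_\bullet$, with $\widetilde f_{|U}=f$; hence $\widetilde f\in\SF(\widetilde U)\cap\mathcal C^0(\widetilde U,\hh)\cap\mathcal C^n(\widetilde U\setminus\R_\bullet,\hh)$ and Theorem~\ref{teo:Wirtinger_regular} applies to $\widetilde f$ on the axially symmetric set $\widetilde U$. There are two equivalences to establish. The first, $f\in\sr(U)\iff\widetilde f\in\sr(\widetilde U)$, is exactly the global extendibility Theorem~\ref{teo:extension}: a slice-regular extension of $f$ to $\widetilde U$ must, by the uniqueness clause of Theorem~\ref{teo:slice_extension}(i), coincide with $\widetilde f$, and conversely $\widetilde f\in\sr(\widetilde U)$ forces $f=\widetilde f_{|U}$ strongly slice-regular. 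The second equivalence is $\thb m(f)=0$ on $U\setminus\R_\bullet$ for all $m$ if and only if $\thb m(\widetilde f)=0$ on $\widetilde U\setminus\R_\bullet$ for all $m$. Granting this, part (i) follows by chaining with Theorem~\ref{teo:Wirtinger_regular}.

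One direction of the second equivalence is immediate, since $f=\widetilde f$ on the open set $U$ and $\thb m$ is local, so $\thb m(f)=\thb m(\widetilde f)_{|U\setminus\R_\bullet}$. The converse is the main obstacle: from the vanishing of $\thb m(\widetilde f)$ merely on $U\setminus\R_\bullet$ I must deduce its vanishing on the possibly much larger $\widetilde U\setminus\R_\bullet$, even though $U$ need be neither dense in $\widetilde U$ nor axially symmetric. Here I would invoke that $\widetilde f$ is truly slice, so Theorem~\ref{teo:Wirtinger} gives $\thb m(\widetilde f)=\dd{\widetilde f}{x_m^c}$, which is a continuous slice function on the axially symmetric open set $\widetilde U\setminus\R_\bullet$ (recall that $\R_\bullet$ is itself axially symmetric, whence $\widetilde U\setminus\R_\bullet=\widetilde{U\setminus\R_\bullet}$). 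This slice function vanishes on the open subset $U\setminus\R_\bullet$, so the identity principle of Proposition~\ref{pro:vanishing} forces $\thb m(\widetilde f)\equiv0$ on $\widetilde{U\setminus\R_\bullet}=\widetilde U\setminus\R_\bullet$, completing the equivalence and hence part (i).

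Finally I would deduce (ii). The forward implication is recorded in Corollary~\ref{cor:loc_Wirtinger_regular}. For the converse, suppose $f$ is locally strongly slice with $\thb m(f)=0$ for all $m$. By Theorem~\ref{teo:slice_extension}(ii) each $y\in U$ has a neighbourhood $U_y$ on which $f$ extends to a continuous slice function of class $\mathcal C^n$ off $\R_\bullet$; by Theorem~\ref{teo:slice_extension}(i) this says precisely that $f_{|U_y}$ is strongly slice in $U_y$. Since $\thb m$ is local, $\thb m(f_{|U_y})=\thb m(f)_{|U_y\setminus\R_\bullet}=0$, so part (i) applied on $U_y$ gives $f_{|U_y}\in\sr(U_y)$, and Theorem~\ref{teo:extension} extends it to some element of $\sr(\widetilde{U_y})$. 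The local extension criterion of Theorem~\ref{teo:localext} is thereby met, so $f\in\sr_{loc}(U)$. I expect the only genuinely delicate point to be the identity-principle step in (i); everything else is bookkeeping with the extension theorems and the locality of the Wirtinger operators.
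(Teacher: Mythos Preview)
Your proof is correct and follows essentially the same route as the paper: extend $f$ to a slice function $\widetilde f$ on $\widetilde U$ via Theorem~\ref{teo:slice_extension}, recognize $\thb m(\widetilde f)$ as a continuous slice function (Theorem~\ref{teo:Wirtinger}), propagate its vanishing from $U\setminus\R_\bullet$ to $\widetilde U\setminus\R_\bullet$ via Proposition~\ref{pro:vanishing}, and invoke Theorem~\ref{teo:Wirtinger_regular}; part~(ii) is then the expected localization. Your explicit observation that $\widetilde U\setminus\R_\bullet=\widetilde{U\setminus\R_\bullet}$ is a nice touch the paper leaves implicit.
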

\begin{proof}
Let $f$ be strongly slice in $U$ with $\thb m(f)=0$ for every $m$. Let $\widetilde f\in\mathcal C^0(\widetilde U,\hh)\cap\mathcal C^n(\widetilde U\setminus\R_\bullet,\hh)$ be the extension given by Theorem \ref{teo:slice_extension}. The functions $\thb m(\widetilde f)$ are continuous slice functions in $\widetilde U\setminus\R_\bullet$, vanishing on $U\setminus\R_\bullet$. Then $\thb m(\widetilde f)=0$ in $\widetilde U\setminus\R_\bullet$ from Proposition \ref{pro:vanishing}. Theorem \ref{teo:Wirtinger_regular} implies that $\widetilde f\in\sr(\widetilde U)$ and then $f\in\sr(U)$. Conversely, if $f\in\sr(U)$, then the extension $\mathcal E(f)=\widetilde f$ provided by Theorem \ref{teo:extension} is slice-regular in $\widetilde U$ and $\thb m(f)=\thb m(\widetilde f)_{|U\setminus\R_\bullet}=0$ for every $m\in\{1,\ldots,n\}$ in view of Theorem \ref{teo:Wirtinger_regular}. 
This proves point (i). Point (ii) is proven in the same way by considering the local extensions $\widetilde f^{(y)}$ of $f$ in place of the global one $\widetilde f$.
\end{proof}

As a consequence of Theorem \ref{teo:Wirtinger_regular_local}, Proposition \ref{pro:loc_slice_several} and Remark \ref{rem:PDE}, we get the following characterization. 

\begin{remark}
Let $U\subseteq\hh^n$ be open.
A function $f\in\mathcal C^0(U,\hh)\cap\mathcal C^{n+1}(U\setminus\R_\bullet,\hh)$ is locally slice-regular if and only if it satisfies the system of partial differential equations
\begin{equation}\label{eq:system1}
\begin{cases}
L^h_{ij}(\Gamma^m_K(f))=0\quad&\forall\, m\in\{1,\ldots,n\},\ 1\le i,j\le 3,\ 1\le h\le m,\ K\in\Pm(m), \\
\thb m(f)=0\quad&\forall\, m\in\{1,\ldots,n\}
\end{cases}
\end{equation}
in $U\setminus\R_\bullet$.
\end{remark}


\end{document}